\definecolor{darkblue}{rgb}{0 .2 .5}
  \crefname{theorem}{Theorem}{Theorems}
  \crefname{thm}{Theorem}{Theorems}
  \crefname{lemma}{Lemma}{Lemmas}
  \crefname{lem}{Lemma}{Lemmas}
  \crefname{remark}{Remark}{Remarks}
  \crefname{prop}{Proposition}{Propositions}
\crefname{question}{Question}{Questions}
  \crefname{defn}{Definition}{Definitions}
  \crefname{corollary}{Corollary}{Corollaries}
  \crefname{section}{Section}{Sections}
  \crefname{figure}{Figure}{Figures}
\newtheorem{thm}{Theorem}[section]
\newtheorem{lem}[thm]{Lemma}
\newtheorem{corollary}[thm]{Corollary}
\newtheorem{prop}[thm]{Proposition}
\newtheorem{defn}[thm]{Definition}
\newtheorem{question}[thm]{Question}
\numberwithin{equation}{section}
\newcommand{\END}{\textsf{END} }
\renewcommand{\P}{\mathbb P}
\newcommand{\E}{\mathbb E}
\newcommand{\R}{\mathbb R}
\newcommand{\N}{\mathbb N}
\renewcommand{\H}{\mathbb H}
\newcounter{mycount}
\newenvironment{mylist}{\begin{list}{{\rm (\roman{mycount})}}%
{\usecounter{mycount}\itemsep 0pt}}{\end{list}}
\title{Geometry and percolation on half planar triangulations}
 \author{Gourab Ray \footnote{University of British Columbia, 1984
     Mathematics Road, Vancouver, BC, V6T 1Z2}}
\date{{\small \today}}
\begin{document}

\maketitle

\begin{abstract}
  We analyze the geometry of domain Markov half planar triangulations. In \cite{AR13} it is
  shown that there exists a one-parameter family of measures supported on half planar triangulations
  satisfying translation invariance and domain Markov property. We study the geometry of these maps and show that they exhibit a sharp phase-transition in
  view of their geometry at $\alpha = 2/3$. For $\alpha<2/3$, the maps 
  form a tree-like stricture with infinitely many small cut-sets. For $\alpha > 2/3$,
  we obtain maps of hyperbolic nature with exponential volume growth and anchored expansion. Some results about the geometry of percolation clusters on such maps and random walk on them are also obtained.
\end{abstract}
\begin{figure}
 \centering{\includegraphics[scale =0.75 ]{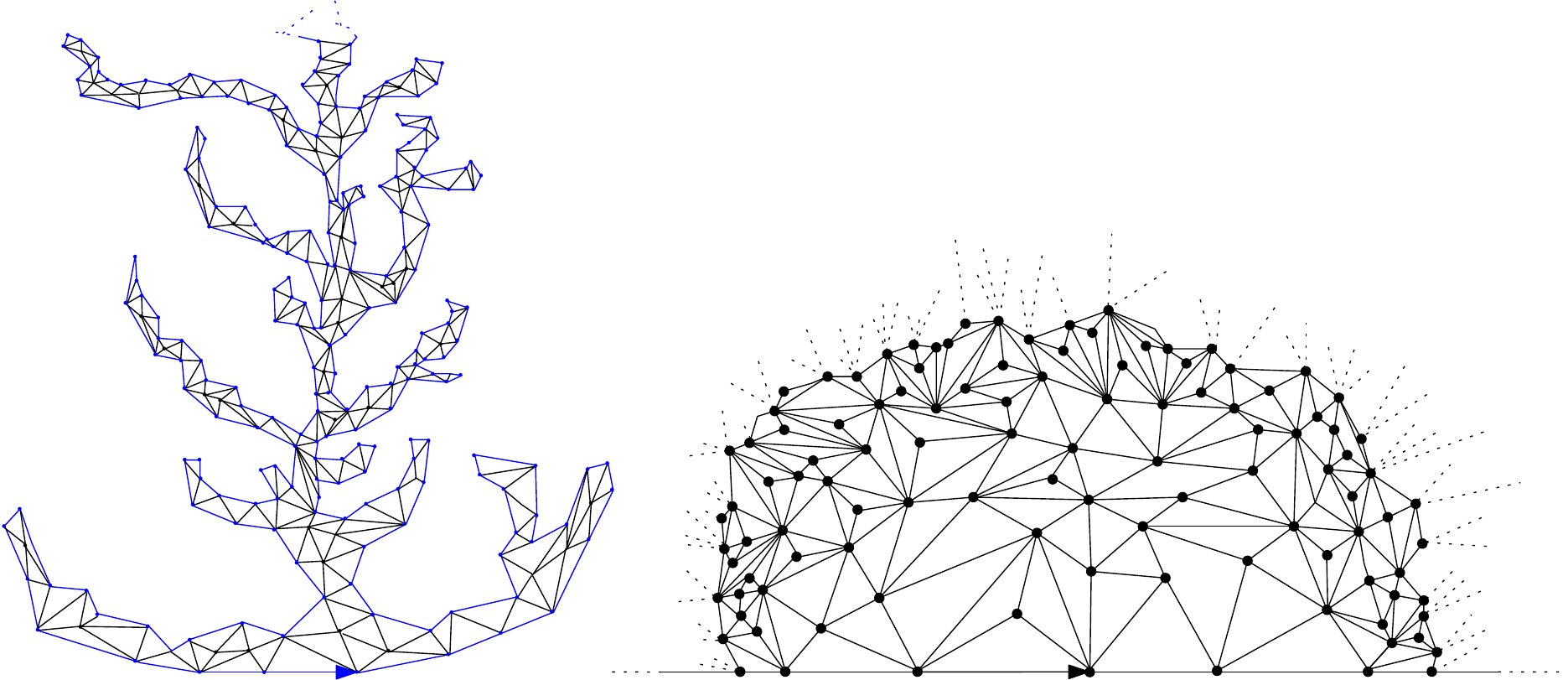}}
\caption{An illustration (artistic) of the geometry of a subcritical half planar triangulation to the left and that of supercritical to the right. The blue edges in the subcritical map is the boundary of the map.}\label{fig:sub_super}		 
\end{figure}

\section{Introduction}\label{sec:intro}
 Studying the geometry of random maps has been an area of major interest in the recent years (see \cite{UIPQinfty,UIPT2,BC11}). In \cite{AR13}, a classification Theorem for domain Markov half planar maps was proved and a phase-transition was observed in view of their geometry (details to follow). In this paper, we focus on the subcritical and supercritical phases of domain Markov half planar triangulations, and analyze this phase-transition in more detail. In particular, we obtain results for volume growth, isoperimetry and geometry of percolation clusters in the supercritical and subcritical phases of these maps. Finally, we extract some information about the behaviour of random walk on these maps from these geometrical informations. So this work can be viewed as a sequel to \cite{AR13}.

\begin{figure}[t]
\centering{\includegraphics[scale=1]{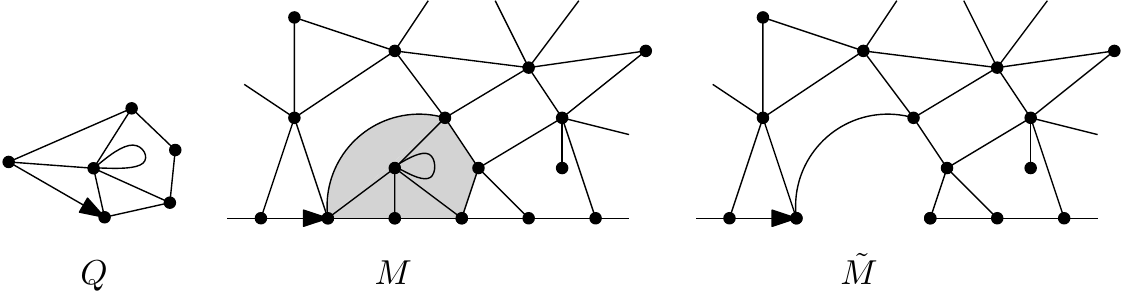} }
\caption{An illustration of domain Markov Property. Left: A finite simply connected map $Q$. 	Centre: A part of $M$ containing $Q$ with $2$ edges along the boundary. Right: The resulting map $\tilde{M}$ after removal of $Q$. Domain Markov property states that the law of $\tilde{M}$ is the same as that of $M$.}\label{fig: dmp}
\end{figure}

Recall that a \textbf{planar map} is a proper embedding of a connected (multi) graph on the sphere which is viewed up to orientation preserving homeomorphisms from the sphere to itself. For embeddings of infinite graphs, we assume that the graphs are locally finite (that is every vertex has finite degree) and the embedding is one-ended (the complement of any finite subset of the map has precisely one infinite connected component). By abuse of terminology, we shall identify the map with its (equivalence class of) embedding. Connected components of the complement of the embedding are called \textbf{faces}. The degree of a face is the number of edges incident to it. We focus on maps with a \textbf{boundary}, that is one face is marked as the external face and the edges and vertices incident to it form the boundary of the map. In this paper, the boundary will always be simple, that is, the boundary edges and vertices will form a simple cycle or an infinite simple path. In this paper, we focus on \textbf{half planar maps}, that is maps which are locally finite, one-ended and have an infinite simple boundary. In other words, these maps can be embedded in $\R \times \R_+$ with no accumulation points such that the boundary is $\R \times \{0\}$. More specifically, we focus on \textbf{half planar triangulations}, that is half planar maps where all the faces except the external face are triangles. All our maps are \textbf{rooted}, that is an oriented edge is specified as the root and in this paper the root is always on the boundary and is oriented in a way such that the external face is to the right of the root.

\section{Main results}\label{sec:main_results}
In \cite{AR13}, measures on half planar maps were considered which satisfy two natural properties: \textbf{translation invariance} and \textbf{domain Markov property}. Informally, we say a half planar random map is translation invariant if the law of the map is invariant with respect to translation of the root along the boundary. A sub-map of a half planar map is said to be simply connected if its union with the boundary is a simply connected subset of the plane. Roughly speaking, if we condition on a random half planar map $M$ to contain some simply connected subset with a simple boundary containing the root edge and remove it, and if the conditional distribution of the remaining map is the same as that of $M$, then we say that the law of $M$ satisfies the domain Markov property (see \cref{fig: dmp}). We refer the reader to \cite{AR13} for a more precise treatment.

Vertices not on the boundary of a half planar map are called \textbf{internal vertices.} We quote below a special case of the main result of \cite{AR13}.

\begin{thm}[\cite{AR13}]\label{thm:main}
  All translation invariant and domain Markov measures supported on half planar triangulations without self-loops form a one parameter family $\H_{\alpha}$ where the parameter $\alpha \in [0,1)$. Furthermore $\alpha$ denotes the probability of the event that the triangle adjacent to the root edge is incident to an internal vertex. 
\end{thm}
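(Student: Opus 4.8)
The plan is to let the domain Markov property (henceforth DMP) collapse the problem onto a handful of ``peeling step'' probabilities and then to show that self-consistency pins these down to a one-parameter curve. Fix a translation invariant, domain Markov measure $\mu$ supported on half planar triangulations without self-loops and let $M\sim\mu$. For a finite simply connected triangulation $Q$ carrying the root edge on its boundary, together with a distinguished boundary arc $I\ni$ root that is to be identified with a piece of $\partial M$, write $f(Q):=\P(Q\subseteq M)$, meaning that $M$ near the root agrees with $Q$ and that $I$ sits on $\partial M$. Translation invariance makes $f(Q)$ independent of where $I$ is placed along $\partial M$, and the DMP says that on the event $\{Q\subseteq M\}$ the complement $M\setminus Q$ is again a half planar triangulation of law $\mu$, with boundary $(\partial M\setminus I)\cup(\partial Q\setminus I)$. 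Consequently, if $Q=Q_1\cup Q_2$ with $Q_2$ glued onto $M\setminus Q_1$, then $f(Q)=f(Q_1)\,f(Q_2)$: revelation probabilities are multiplicative under gluing.

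\textbf{Peeling and the parameter.} Reveal the triangle $t$ of $M$ on the inner side of the root edge $(x_0,x_1)$. Since there are no self-loops, its apex $v$ is either an internal vertex of $M$ or a boundary vertex $x_m$ with $m\notin\{0,1\}$; set $\alpha:=\P(v\ \text{internal})$ and, for $k\ge1$, $\beta_k:=\P(v=x_{1+k})$ and $\gamma_k:=\P(v=x_{-k})$. By translation invariance these numbers do not depend on the root, and by the DMP they also govern the triangle revealed on any exposed edge at any stage of a peeling exploration; since exactly one outcome occurs, $\alpha+\sum_{k\ge1}(\beta_k+\gamma_k)=1$. When $v=x_{1+k}$ or $x_{-k}$ with $k\ge2$, the triangle $t$ together with the chord it creates cuts off a finite triangulated polygon which, by the DMP, is conditionally independent of $M\setminus Q$. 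Combining this with multiplicativity, every $f(Q)$, and hence $\mu$ itself, is determined by the array $(\alpha,\{\beta_k\},\{\gamma_k\})$: concretely, $\mu$ is the measure obtained by running the peeling process with these step probabilities and invoking Kolmogorov's extension theorem.

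\textbf{The consistency equations (the crux).} The array $(\alpha,\{\beta_k\},\{\gamma_k\})$ is far from arbitrary. Beyond the normalisation above, the DMP forces the finite polygons cut off in the previous step to be distributed as free (Boltzmann-type) triangulations of their boundary polygon with a single common weight per internal vertex; imposing this, together with the requirement that $f(Q)$ be well defined independently of the peeling order, yields a closed system of polynomial relations in $\alpha$, the $\beta_k$, the $\gamma_k$ and the partition functions $Z_m$ of triangulations of an $m$-gon, where the $Z_m$ obey the classical algebraic generating-function identity for polygon triangulations. Solving this system, the solution set is one dimensional; one may take $\alpha$ as its coordinate, which expresses $\beta_k=\beta_k(\alpha)$ and $\gamma_k=\gamma_k(\alpha)$ explicitly. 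I expect this step --- writing down the correct compatibility identity and verifying that its solution space is exactly a curve --- to be the main obstacle.

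\textbf{Range of $\alpha$ and existence.} Trivially $\alpha\in[0,1]$. The value $\alpha=1$ is excluded: it forces $\beta_k=\gamma_k=0$ for every $k$, so the peeling exploration never closes off a bounded region, and one checks this is incompatible with local finiteness (the exploration must eventually wind around a vertex, giving it infinite degree). Hence $\alpha\in[0,1)$. Conversely, for each $\alpha\in[0,1)$ the quantities $\beta_k(\alpha),\gamma_k(\alpha)$ from the previous step are nonnegative and sum with $\alpha$ to $1$, so they are genuine peeling-step probabilities; the associated Kolmogorov-extension measure is supported on half planar triangulations and is translation invariant and domain Markov by construction, both properties being immediate from the fact that its peeling steps have a fixed law, independent of the location and of the past. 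This produces the one-parameter family $\H_\alpha$, and the identification of $\alpha$ with the probability that the triangle adjacent to the root edge is incident to an internal vertex is exactly its definition in the second step.
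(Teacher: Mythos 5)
The paper itself does not prove this statement: it is quoted verbatim from \cite{AR13}, so there is no internal proof to compare against, and your proposal must be judged as a reconstruction of the argument in that reference, whose broad strategy (peeling step probabilities, multiplicativity of revelation probabilities under the domain Markov property, consistency relations reducing everything to one parameter) it does follow in outline.

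The problem is that the decisive step is asserted rather than proved, as you yourself acknowledge ("I expect this step \dots to be the main obstacle"). Two things are missing. First, you take as input that the finite polygons cut off by a peeling step are Boltzmann/free triangulations with a single common vertex weight; in \cite{AR13} this is a \emph{consequence} of the key lemma, namely that for every finite simply connected $Q$ the probability of observing $Q$ has the product form $\alpha^{\#}\beta^{\#}$ in the number of internal vertices and of internal edges (the statement recorded in this paper as \cref{lem:prob}), which is proved by peeling $Q$ face by face and using translation invariance to see that the answer is independent of the order. Assuming the Boltzmann structure to derive the product form is close to circular. Second, the "closed system of polynomial relations" is never written down or solved: in \cite{AR13} one computes the probability of one explicit two-face configuration in two different peeling orders, obtaining a quadratic relation between $\alpha$ and $\beta$ whose roots are exactly $\beta=(2-\alpha)^2/16$ and $\beta=\alpha(1-\alpha)/2$ (the formulas \eqref{eq:betasub} and \eqref{eq:betasuper} above), and then one must rule out one branch depending on $\alpha$ by positivity and by the requirement that the step probabilities $p_i$ sum to $1-\alpha$; this branch selection is where the value $2/3$ first appears and is not a formality. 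Your existence paragraph has the same issue in miniature: "translation invariance and DMP are immediate by construction" hides the need to check that the peeling-constructed measure is independent of the exploration algorithm and satisfies the Markov property for arbitrary simply connected $Q$, not merely for single-face removals, which again rests on the product formula. The exclusion of $\alpha=1$ and the identification of $\alpha$ with the probability of an internal apex are fine, but without the product-form lemma and the explicit consistency computation the classification itself is not established.
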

We remark here that the restriction to triangulations without self-loops is necessary to obtain a one-parameter family (see \cite{AR13}, Section 3.4 for more on this.)

The measure corresponding to $\alpha =2/3$ is the well-known \textbf{uniform infinite half planar triangulation} (UIHPT) (see \cite{UIPT2,AC13}). It is illustrated in \cite{AR13} that the measures $\H_\alpha$ must exhibit a phase-transition in view of their geometry at $\alpha =2/3$. The goal of this paper is to study the maps in the subcritical ($\alpha \in [0,2/3)$) and supercritical ($\alpha \in (2/3,1)$) regime of this one-parameter family.

\subsection{Geometry}
We present below the results obtained in this paper first for supercritical and then for subcritical maps. Roughly, the behaviour of supercritical maps are hyperbolic: they have exponential volume growth and anchored expansion. Anchored expansion is enough to guarantee that the simple random walk is transient. The subcritical maps behave, in view of their geometry, roughly like a critical Galton-Watson tree conditioned to survive (see \cite{Kestensubdiff}). They have quadratic volume growth and infinitely many cut-sets of finite size (see \cref{fig:sub_super}). All the terms stated in this paragraph will be defined rigorously below.

 We remark here that the geometric properties are certainly very different from the critical uniform infinite half planar triangulation (UIHPT). For results of similar nature regarding the UIHPT, see \cite{UIPT2,UIPT3,AC13}.

\subsubsection{Supercritical}
Roughly, the geometry of maps in the supercritical regime can be viewed as a collection of supercritical trees one attached to each vertex of an infinite simple path (see \cref{fig:heur}). Hence, we can expect exponential volume growth, large cut-sets and positive speed of random walk on these maps. The results which follow confirm some of these heuristics.

 Throughout this subsection, we assume $\alpha \in (2/3,1)$.
 For a set $X$, we write $|X|$ to denote its cardinality. By an abuse of notation, for any finite graph or map $G$, let $|G|$ denote its number of vertices. The \textbf{ball} of radius $r$ in a map denotes the submap formed by all the faces which have at least one vertex incident to it which is at a distance strictly less than $r$ from the root vertex along with all the edges and vertices incident to them. The \textbf{hull} of radius $r$ is the ball of radius $r$ along with all the finite components of its complement. Note that since the half planar maps are one-ended, there will be exactly one infinite component in the complement of the ball and the hull is always a simply connected sub-map. The \textbf{internal boundary} of a simply connected sub-map is the set of vertices and edges in the sub-map which is incident to at least one finite degree face which is not in the sub-map. Clearly, the internal boundary  of a hull is a connected simple path in the map. We denote the hull of radius $r$ around the root of a rooted map $M$ by $B_r(M)$ and the internal boundary of $B_r(M)$ by $\partial B_r(M)$. We sometimes will drop the $M$ and just write $B_r$ for the hull when the map in question is obvious.

We first show exponential volume growth of the hull and the boundary of the hull. 

\begin{thm}\label{thm:vol_h_triang}
Suppose $T$ is a map with law $\H_\alpha$ where $\alpha \in (2/3,1)$.
 There exists some constants $C>c>1$ such that almost surely,
\begin{equation}
  \limsup \frac{ |\partial B_r(T)| }{C^r}  =0 \quad \text{and } \quad  \liminf \frac{ |\partial B_r(T)| }{c^r} = \infty
\end{equation}
and also,
\begin{equation}
    \limsup \frac{ | B_r(T)| }{C^r}  =0 \quad \text{and } \quad  \liminf \frac{ |B_r(T)| }{c^r} = \infty
\end{equation}
\end{thm}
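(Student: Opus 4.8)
The plan is to exploit the domain Markov property to describe the growth of the hull's boundary via a Markov chain and then bootstrap from boundary growth to volume growth. First I would study the process $(|\partial B_r(T)|)_{r \geq 1}$, or rather a closely related "peeling-along-the-boundary" process. Fix $\alpha \in (2/3,1)$. The key structural input from Theorem 1.3 and the domain Markov property is that, conditioned on $B_r(T)$, the map outside $B_r(T)$ is again distributed as $\H_\alpha$ in the half plane glued along $\partial B_r(T)$; this lets one reveal the faces incident to $\partial B_r(T)$ one at a time. Each revealed triangle either is incident to a new internal vertex (probability $\alpha$), thereby lengthening the current exposed boundary, or it "zips up" two boundary edges, possibly swallowing a finite region into the hull. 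By summing the expected increments along the full length of $\partial B_r(T)$, one obtains that $\E[\,|\partial B_{r+1}(T)| \mid B_r(T)\,]$ is, up to lower-order corrections, a fixed multiplicative factor $\lambda = \lambda(\alpha)$ times $|\partial B_r(T)|$, where the sign $\lambda > 1$ is exactly the manifestation of supercriticality $\alpha > 2/3$ (the UIHPT value $\alpha = 2/3$ being the critical $\lambda = 1$ point, as is computed in \cite{AR13}). One should check that the relevant peeling step distributions for triangulations are explicit enough — this is standard for the UIHPT and carries over to $\H_\alpha$ — so that $\lambda$ can be written down and seen to exceed $1$.

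Next I would upgrade the in-expectation estimate to an almost-sure statement. For the $\limsup$ bound against $C^r$ I would use a first-moment/Borel--Cantelli argument: $\E|\partial B_r(T)| \leq C_0 \lambda^r$ gives, by Markov's inequality, that $|\partial B_r(T)| \geq (\lambda+\epsilon)^r$ only finitely often, so taking $C = \lambda + \epsilon$ yields $\limsup |\partial B_r(T)|/C^r = 0$. For the $\liminf$ lower bound against $c^r$ I would establish that $M_r := |\partial B_r(T)|/\lambda^r$ (or a suitably stopped version) is a supermartingale or near-martingale bounded below by $0$, hence converges a.s.\ to a limit $W$, together with a non-degeneracy argument showing $W > 0$ a.s.; the latter follows from a conditional-variance / $L^2$ control on the increments, or alternatively from the observation that the boundary length process stochastically dominates a supercritical Galton--Watson-type process which survives with positive probability, combined with a $0$--$1$ law coming from translation invariance and ergodicity of $\H_\alpha$ along the boundary. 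Any $c \in (1,\lambda)$ then works.

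Finally I would deduce the volume statements from the boundary statements. The upper bound $\limsup |B_r(T)|/C^r = 0$ is immediate after possibly enlarging $C$, since $|B_r(T)| = \sum_{s \leq r}(|B_s(T)| - |B_{s-1}(T)|)$ and each annulus $B_s \setminus B_{s-1}$ has volume controlled (in expectation, via domain Markov) by a constant times $|\partial B_s(T)|$ plus the sizes of finite regions swallowed during the peeling of generation $s$; controlling the latter requires a tail bound on the volume of a finite region swallowed in one peeling step, which is again an explicit (polynomially-decaying-weight, exponentially-suppressed-volume for $\alpha>2/3$) computation. The lower bound $|B_r(T)| \geq |\partial B_r(T)| \geq c^r$ infinitely often is trivial since $\partial B_r \subseteq B_r$.

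The main obstacle I expect is the non-degeneracy of the martingale limit, i.e.\ ruling out $|\partial B_r(T)|/\lambda^r \to 0$: one must quantitatively prevent the exposed boundary from pinching off all its length into swallowed finite pieces. This needs a genuine lower bound on the conditional probability that a positive fraction of the current boundary survives to the next generation, uniformly in the configuration, which is where the geometry of $\H_\alpha$ (as opposed to soft moment estimates) really enters; I would handle it by coupling with a branching process with a fixed offspring law having mean $\lambda > 1$ and then invoking the Kesten--Stigum / survival dichotomy together with ergodicity to promote "positive probability of non-extinction" to "almost sure non-extinction."
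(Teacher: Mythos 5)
Your upper bounds are essentially sound: conditioning on $B_r$, each peeling step adds at most one vertex to the exposed boundary and the number of steps needed to swallow $\partial B_r$ is dominated by a sum of $|\partial B_r|$ geometric variables, so $\E\bigl[\,|\partial B_{r+1}|\mid B_r\bigr]\le A|\partial B_r|$ and, with the finite mean of the per-step swallowed volume (exponential tail of $p_i$ plus \cref{prop:free_expectation}), a Markov--Borel--Cantelli argument does give the $\limsup$ statements. The genuine gap is in the $\liminf$ lower bound. Your whole lower-bound strategy rests on two unproven assertions: (a) that $\E\bigl[\,|\partial B_{r+1}|\mid B_r\bigr]\ge \lambda|\partial B_r|+O(1)$ for a single explicit $\lambda(\alpha)>1$, and (b) that $|\partial B_r|$ stochastically dominates, generation by generation and uniformly in the configuration, a supercritical Galton--Watson process. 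Neither is straightforward: the passage from $\partial B_r$ to $\partial B_{r+1}$ is not a branching structure, because a single step of type $(L,i)$ or $(R,i)$ can swallow an arbitrarily long stretch of the current boundary (including vertices created earlier within the same generation), so offspring of distinct boundary vertices are strongly dependent and there is no obvious minorization by i.i.d.\ offspring with mean exceeding $1$. The paper's explicit positive quantity (\cref{lem:expected_change}) is a per-peeling-step drift $\sqrt{\alpha(3\alpha-2)}$, not a per-radius multiplicative factor, and extracting your $\lambda>1$ from it would require redoing the within-generation analysis. Consequently the Kesten--Stigum/non-degeneracy step, which you correctly identify as the crux, is left unsupported; the ergodicity upgrade only helps once positive probability of exponential growth is established, which is exactly what is missing.

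The paper bypasses this difficulty by indexing everything by peeling steps rather than by radius. With $X_n$ the exposed boundary length after $n$ peeling steps, one has $X_n\ge\tilde X_n$ where $\tilde X_n$ has i.i.d.\ increments of positive mean, so the strong law gives $X_n\ge cn$ eventually a.s.\ (\cref{lem:a.s.boundary}); combined with the observations that $|\partial B_{r+1}|\le\Delta\tau_r$ (new boundary vertices arrive one per step) and $\Delta\tau_r\le A X_{\tau_r}$ with exponentially small failure probability (\cref{lem:time_estimate}), this yields \eqref{eq:boundary_bound}, hence $\tau_{r+1}\ge\tau_r/(1-a')$ and geometric growth of $\tau_r$ (\cref{lem: stop_time}); the boundary and volume statements then follow because $|\partial B_r|=X_{\tau_r}\asymp\tau_r$ and $|B_r|=V_{\tau_r}$ with $V_n-X_n$ a sum of i.i.d.\ finite-mean per-step volume increments. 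If you replace your generation-indexed martingale by this step-indexed law of large numbers, your argument closes without ever confronting the non-degeneracy problem; as written, the $\liminf$ half of the theorem is not proved.
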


Having established the exponential volume growth, we ask if there are small cut-sets in the map. The usual parameter to look for in this situation is the Cheeger constant but since our maps are random and any finite configuration does occur almost surely somewhere in the map, the correct parameter to consider is the anchored expansion constant (see \cite{LP:book} Chapter 6).

For a graph $G$, let $V(G)$ denote its set of vertices. 	For any graph $G$, and a subset of vertices $S \subset V(G)$, let $|\partial_E S|$ denote the number of edges in $G$ with one vertex in $S$ and another in $V(G) \setminus S$. Also let $|S|_E$ denote the sum of the degrees of the vertices in $S$. The \textbf{anchored expansion constant} $i_E^*(G)$ of a graph $G$ is defined as 
\[
 i_E^*(G)= \liminf_{n \to \infty} \left\{ \frac{|\partial_E S|}{|S|_E} ; S \subset V(G)\text{ is connected, }v \in S, |S|_E \ge n\right\}
\]
 We say the graph $G$ has \textbf{anchored expansion} if $i_E^*(G)>0$.
Although we specify a vertex $v$ in the above definition, the definition is independent of the choice of $v$. 
\begin{figure}
 \centering{\includegraphics[scale =1 ]{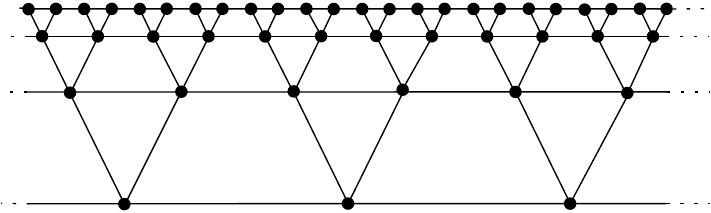}}
\caption{An very rough intuition of the geometry of supercritical maps.}\label{fig:heur}		 
\end{figure}
\begin{thm}\label{thm:anchored}
 A half planar triangulation with law $\H_\alpha$ for $\alpha \in (2/3,1)$ has anchored expansion almost surely.
 \end{thm}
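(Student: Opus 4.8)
The plan is to reduce anchored expansion to a Borel--Cantelli estimate asserting that ``bottlenecks'' around the root are rare.

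\textbf{Reductions.} Given a connected $S\ni\rho$ (the root vertex), let $\widehat S$ be its hull, i.e. $S$ together with all finite components of its complement. Filling in holes only deletes edges from $\partial_E S$ and only adds to $|S|_E$, so $|\partial_E\widehat S|/|\widehat S|_E\le|\partial_E S|/|S|_E$, and we may restrict to simply connected $S\ni\rho$ with connected (hence infinite, by one-endedness) complement. For such $S$ the separating set $\partial S$ is a simple path, and since all faces are triangles, Euler's formula gives $|E(S)|=3|S|-3-|\partial S|$, whence $|S|_E=2|E(S)|+|\partial_E S|=\Theta(|S|)+|\partial_E S|$; a short local argument (using that an edge of $\partial S$ lies in exactly one face outside $S$) also gives $|\partial_E S|\ge c_0|\partial S|$. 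So if $|\partial_E S|\le\epsilon|S|_E$ with $\epsilon$ small then $|S|_E=\Theta(|S|)$ and $|\partial S|\le C\epsilon|S|$. Finally, a graph has $i_E^*>0$ as soon as, for some $\epsilon>0$, only finitely many connected $S\ni\rho$ satisfy $|\partial_E S|\le\epsilon|S|_E$ (those then have bounded $|S|_E$, and the infimum past that bound is $\ge\epsilon$). Combining, it suffices to find $\epsilon>0$ such that, for each large $M$,
\[
\P\big(\exists\text{ simply connected }S\ni\rho,\ \widehat S=S,\ |\partial S|\le\epsilon|S|,\ |S|=M\big)\le e^{-cM},
\]
since summing over $M$ and applying Borel--Cantelli yields $i_E^*(T)>0$ almost surely.

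\textbf{Peeling input.} I would prove the displayed bound using the peeling process and the domain Markov property. In a peeling exploration of a half planar triangulation the boundary length evolves as a random walk with a fixed increment law: $+1$ with probability $\alpha$ (a new internal vertex), and $-k$ with probability proportional to the critical Boltzmann partition function of the $(k+1)$-gon (a region of that perimeter is absorbed into the explored hull). Its drift is, up to normalisation, $\alpha-\tfrac23$, hence strictly positive exactly when $\alpha>\tfrac23$; moreover for $\alpha>\tfrac23$ the step distribution has exponential tails. These are precisely the facts behind \cref{thm:vol_h_triang}, so the following may be imported from its proof: the perimeter walk has positive drift $\delta>0$ and satisfies the Cram\'er bound $\P(\inf_t P_t\le L-a\mid P_0=L)\le e^{-\eta a}$; the expected volume revealed per peeling step is finite; and consequently, for the metric peeling, $|B_r|\le C\,|\partial B_r|$ almost surely for all large $r$.

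\textbf{The estimate.} Fix $S$ as in the displayed event and let $r^\ast=\max\{r:B_{r^\ast}\subseteq S\}$; condition on $B_{r^\ast}$ via the domain Markov property and explore its complement, a half planar triangulation with boundary $\partial B_{r^\ast}$. If $|B_{r^\ast}|\ge M/2$ then $|\partial B_{r^\ast}|\ge|B_{r^\ast}|/C\ge M/(2C)$, and revealing the annular region $S\setminus B_{r^\ast}$ forces the peeling perimeter of the complement to drop from $\asymp|\partial B_{r^\ast}|$ down to $\asymp|\partial S|\le\epsilon M$, i.e. by $\ge(\tfrac1{2C}-C'\epsilon)M$; by the Cram\'er bound this has probability $\le e^{-c'M}$ once $\epsilon<\tfrac1{2C C'}$. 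If instead $|B_{r^\ast}|<M/2$, then $S\setminus B_{r^\ast}$ has volume $\ge M/2$ and is glued along $\partial B_{r^\ast}$ with outer perimeter $\le\epsilon M$, so has total perimeter $\le|\partial B_{r^\ast}|+\epsilon M$; revealing its $\ge M/2$ vertices while keeping the peeling perimeter that small is again a large-deviation event of probability $\le e^{-c'M}$ (splitting further according to whether $|\partial B_{r^\ast}|$ is itself $\le$ a small multiple of $M$, in which case the perimeter is small throughout, or comparable to $M$, in which case one invokes the funnelling bound as above; one uses here that $|\partial B_{r^\ast}|\le|B_{r^\ast}|<M/2$ and that $\mathbb{E}[\text{volume per step}]<\infty$). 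A union bound over the $O(\log M)$ possible values of $r^\ast$ and the at most $C^{\epsilon M}$ shapes of the path $\partial S$ (encoded, say, by a peeling-type description of $\partial S$) is absorbed by taking $\epsilon$ small, giving the displayed decay.

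\textbf{Main obstacle.} The step I expect to be hardest to make rigorous is the last one: a fixed peeling algorithm need not reveal an arbitrary simply connected $S$, and $S$ need not be a metric hull (it may carry long thin ``tentacles''), so one must either genuinely bound by $C^{p}$ the number of relevant perimeter-$p$ region shapes around the root, or argue that the existence of such an $S$ already forces a canonically defined exploration --- the metric-hull peeling, or the peeling of the complement of the appropriate $B_{r^\ast}$ --- into the rare large-deviation regime. The remaining ingredients --- positive drift, exponential-tailed steps, and the comparability $|B_r|\asymp|\partial B_r|$ for $\alpha>\tfrac23$, all available from \cref{thm:vol_h_triang}; the Cram\'er bound for the infimum of a positive-drift walk; and the elementary reductions in the first paragraph --- are standard.
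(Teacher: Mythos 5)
There is a genuine gap, and it sits exactly where you flag it: the passage from ``a bottleneck set $S$ exists'' to a large-deviation event for a peeling walk is never justified, and it is not a routine technicality. The Cram\'er bound applies to a peeling exploration that is fixed in advance; the set $S$ (and hence the exploration you would like to run, e.g.\ ``peel the complement of $B_{r^\ast}$ until $S\setminus B_{r^\ast}$ is revealed'') is chosen after looking at the map, so the domain Markov property cannot be invoked for it without a union bound over all candidate bottlenecks. That union bound is precisely the missing entropy estimate: the number of relevant shapes is not obviously $C^{\epsilon M}$ (degrees are unbounded, so even counting simple paths of length $\epsilon M$ inside the random map is not a deterministic $C^{\epsilon M}$ bound), and your annulus/funnelling heuristic also ignores the half-plane feature that $S$ may hug a long stretch of the infinite boundary, in which case its internal boundary can be short without any peeling perimeter ever being forced to drop. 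A secondary issue: you use $|B_r|\le C|\partial B_r|$, which \cref{thm:vol_h_triang} gives only almost surely with a random constant and for large $r$, whereas your Borel--Cantelli scheme needs a quantitative bound valid for each $M$.

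For contrast, the paper closes exactly this hole by a first-moment computation rather than an exploration argument. It fills in $S$ to a simply connected $\overline{S}$ whose internal boundary is a separating loop, and then bounds the probability that \emph{any} $(k,l)$-separating loop with $l<tk$ exists by summing, over all abstract candidate triangulations, the exact occurrence probability $\alpha^{k+j}\beta^{i+k-2}$ of \cref{lem:prob} against the enumeration $\phi_{k,l}$ of \cref{prop:count}/\eqref{eq:asymp}; for $\alpha>2/3$ the probability decays exponentially in the \emph{volume} $k$ fast enough to beat the $(27/2)^k$-type entropy (\cref{lem:expansion}). The boundary-hugging case is handled separately by the $t$-bad segment lemma (\cref{lem:segment}), whose input is the linear lower bound on graph distances between boundary vertices (\cref{lem:boundary_linear}) --- the only place where peeling and exponential tails of swallowed boundary are actually used --- and Euler's formula then reduces ``$\partial_E S$ small relative to $|S|_E$'' to one of these two events. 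If you want to rescue your route, you would need to either prove a genuine $C^{p}$ bound on perimeter-$p$ bottleneck shapes (effectively redoing the enumeration) or replace the after-the-fact exploration by a union over pre-specified explorations, at which point you are back to the paper's counting argument.
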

 We remark here that the exponential lower bound for the volume growth can be concluded from anchored expansion, but we prove it using a different procedure involving an exploration process because we use the same exploration process to study the subcritical maps and also we get an upper bound on the volume growth using this method. 

A simple random walk on a random map is defined as follows: we fix a sample of the map and define a sequence $X_0,X_1,\ldots$ such that $X_0$ is the root vertex and after obtaining $X_i$, we choose uniformly one of the neighbouring edges of $X_i$ and define $X_{i+1}$ to be the vertex other than $X_i$ incident to that edge. It is shown in \cite{v00} that simple random walk on bounded degree graphs having anchored expansion has positive liminf speed. Unfortunately our maps are not bounded degree maps, so we cannot directly apply the result. However we can conclude using Theorem 3.5 of \cite{T92} and \cref{thm:anchored} that 
\begin{corollary}
 Simple random walk on a map with law $\H_\alpha$ is transient almost surely if $\alpha \in (2/3,1)$.
\end{corollary}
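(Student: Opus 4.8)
The plan is to derive transience from Thomassen's isoperimetric criterion for transience (Theorem 3.5 of \cite{T92}) applied to the sequence of hulls, using \cref{thm:anchored} to control edge-boundaries and \cref{thm:vol_h_triang} to turn that control into a bound that grows with the radius. Throughout we would work on the almost sure event $\mathcal E$ on which simultaneously $i_E^*(T)=:\kappa>0$ (by \cref{thm:anchored}) and $|B_r(T)|\ge c^r$ for all sufficiently large $r$ (by the $\liminf$ half of \cref{thm:vol_h_triang}); note $\P(\mathcal E)=1$.

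First I would check that the hulls $\big(B_r(T)\big)_{r\ge 1}$ form an admissible exhausting family for the criterion. Since $T$ is locally finite and one-ended, each ball, hence each hull, of finite radius is finite; the hulls increase with $r$, contain the root, and $\bigcup_r V\!\big(B_r(T)\big)=V(T)$ because every vertex lies within finite distance of the root. By definition the complement of a hull has no finite component, so each $B_r(T)$ is a connected set with connected infinite complement; this is precisely why one works with hulls rather than balls, since there are then no finite ``pockets'' hanging off $\partial B_r(T)$ whose boundary edges would be useless for routing flow out to infinity.

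Next I would bound $|\partial_E B_r(T)|$ from below on $\mathcal E$. Each hull $B_r(T)$ is connected, contains the root, and has $|B_r(T)|_E\ge |B_r(T)|\to\infty$; hence, directly from the definition of $i_E^*(T)=\kappa$, for all large $r$ we get $|\partial_E B_r(T)|\ge \tfrac{\kappa}{2}\,|B_r(T)|_E\ge \tfrac{\kappa}{2}\,|B_r(T)|\ge \tfrac{\kappa}{2}\,c^r$. Since $c>1$ this gives
\[
\sum_{r\ge 1}\frac{1}{|\partial_E B_r(T)|}\ \le\ C_0+\frac{2}{\kappa}\sum_{r}c^{-r}\ <\ \infty,
\]
and Theorem 3.5 of \cite{T92}, applied to the exhausting family $\big(B_r(T)\big)_r$, yields that simple random walk on $T$ is transient on $\mathcal E$, hence almost surely, which is the assertion of the corollary.

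The only genuinely delicate point is matching the hypotheses of Thomassen's criterion to the hulls, in particular the absence of finite complementary components, which is what makes the exponentially large edge-boundaries actually exploitable for building a finite-energy flow; this is built into the definition of a hull and is also forced quantitatively by anchored expansion. The remaining steps — passing from the anchored-expansion inequality in the $|S|_E$ normalisation to a lower bound in terms of $|B_r(T)|$ and then to $c^r$, and summing the geometric series — are routine. I also note that \cref{thm:vol_h_triang} is not strictly required here: anchored expansion alone forces at least exponential volume growth of the hulls, so \cref{thm:anchored} by itself already supplies everything the criterion needs.
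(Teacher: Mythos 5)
There is a genuine gap at the point where you invoke ``Theorem 3.5 of \cite{T92} applied to the exhausting family $(B_r(T))_r$'' with the condition $\sum_r 1/|\partial_E B_r(T)|<\infty$. Thomassen's criterion is not of this form: it requires an \emph{anchored isoperimetric inequality}, i.e.\ a bound $|\partial_E S|\ge f(|S|)$ holding for \emph{every} finite connected vertex set $S$ containing the fixed root, with $f$ nondecreasing and $\sum_k f(k)^{-2}<\infty$. Summability of the reciprocal boundary sizes along one exhausting sequence of hulls is not a sufficient condition for transience of a general graph: the converse of the Nash--Williams criterion fails, because exponentially large hull boundaries say nothing about bottlenecks \emph{inside} the annuli. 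For instance, a chain of ``generations'' of size $2^r$ in which all edges leading to the next generation emanate from a single cut vertex $u_r$ has hull boundaries of size $2^{r+1}$, yet the effective resistance to infinity decomposes as a sum over the cut vertices and can diverge, giving a recurrent graph with $\sum_r 1/|\partial_E B_r|<\infty$. So the implication you use as the engine of the proof is not the cited theorem and is false as a general principle; restricting the anchored expansion inequality to the hulls discards exactly the information the criterion needs.

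The intended (and much shorter) argument is: \cref{thm:anchored} says that, almost surely, there is $\kappa>0$ with $|\partial_E S|\ge \kappa |S|_E\ge \kappa|S|$ for all connected $S$ containing the root with $|S|_E$ large enough; since the graph is infinite and connected, $|\partial_E S|\ge 1$ for the remaining finitely many sizes, so the anchored isoperimetric inequality holds with a function $f$ that is linear up to a bounded correction, and $\sum_k f(k)^{-2}<\infty$. Theorem 3.5 of \cite{T92} then yields a transient subtree, hence transience of the simple random walk. In particular neither the hulls nor \cref{thm:vol_h_triang} is needed, and your closing remark that ``anchored expansion alone supplies everything'' is correct, but through this direct application to all anchored connected sets rather than through volume growth of the hulls.
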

We believe that the random walk do have positive speed almost surely for supercritical maps. 
In fact we also believe that the distance of $X_n$ from the boundary of the map also grows linearly. We plan to take this up in a future paper.
  
\subsubsection{Subcritical}

Throughout this subsection, $\alpha \in [0,2/3)$.
The journey of understanding subcritical triangulations begins with a result about their cut-sets. A \textbf{cut-set} of an infinite rooted graph $G$ is a connected subgraph of $G$ which when removed breaks up $G$ into two or more connected components, the root being in the finite component. We shall see later (\cref{prop:cutset}) that in the subcritical regime there exists infinitely many cutsets each of which consists of a single edge almost surely.

Union of two graphs is the graph induced by the union of their vertices.  Presence of infinitely many cutsets consisting of a single edge (\cref{prop:cutset}) and domain Markov property entails that a triangulation $T$ distributed as a subcritical $\H_\alpha$ can be decomposed as 
\begin{equation}
       T = \cup_{i=1}^\infty T_i \label{eq:decomposition}
\end{equation}
where $T_i$'s are i.i.d. triangulations which are almost surely finite. Furthermore the decomposition is such that $T_i,T_j$ share a single edge if and only if $|i -j|=1$ (see \cref{fig:sub_super}). This can be for example proved using the peeling procedure described in \cref{sec:peeling0}. Such a decomposition also ensures that the subcritical triangulations are recurrent almost surely (\cref{prop:recurrent}). We believe that the spectral dimension of a subcritical map is almost surely $4/3$ because the subcritical maps fall in the family of strongly recurrent graphs as per \cite{kumagai2008heat}. See discussion in \cref{sec:open}.

 We can also consider the dual maps of these maps which consist of a vertex in each face and two vertices are joined together if their corresponding faces share an edge. As described in \cite{AR13}, we make such a dual map locally finite by breaking the infinite degree vertex corresponding to the infinite face into infinitely many leaves. The decomposition \eqref{eq:decomposition} entails that the dual of a subcritical triangulation almost surely consists of an i.i.d. sequence of finite graphs each of which contains vertices of degree either $3$ or $1$ and are connected to each other by a single edge.\ For $\alpha = 0$, it can actually be seen that the dual is a critical Galton-Watson tree conditioned to survive where the offspring distribution of the Galton-Watson tree is as follows: it produces two offsprings with probability $1/2$ and no offspring with probability $1/2$ (see \cite{AR13}). Hence the maps for different values of $\alpha$ can be seen as an ``interpolation'' between the UIHPT and critical trees. In fact, we believe that the scaling limit of such maps in the sense of local Gromov-Hausdorff topology exists and is the infinite non-compact CRT (which can be viewed as the tangent cone at the root of the compact Aldous CRT, (see \cite{BP13})).

The discussion above allows one to expect that the length of the boundary of the hull of radius $r$ is a tight sequence. We prove a stronger result: the boundary sizes of the hull has exponential tail.
\begin{thm}\label{thm: boundary_ball_tight}
Let $\alpha \in [0,2/3)$ and let $T$ be a map with law $\H_\alpha$. Then there exists some positive constant $c>0$ (depending only on $\alpha$) such that
\[
 \H_{\alpha}(|\partial {B_r(T)}|)  > n) < e^{-cn}
\]
 for all $n \ge 1$.
\end{thm}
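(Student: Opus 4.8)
First, the plan is to combine the decomposition $T=\cup_i T_i$ of~\eqref{eq:decomposition} into i.i.d.\ finite blocks with a peeling estimate for a single block, and then to pass to the hull of radius $r$ by a renewal argument. Write $T=\cup_{i\ge1}T_i$ with $T_i\cap T_{i+1}$ equal to the single (cut) edge $e_i$, $e_0$ the root edge, and let $v$ be the root vertex. For a finite block $\tau$ with its distinguished entry and exit edges, set $W(\tau)$ to be the maximum over all radii $\rho\ge 0$ of the length of the internal boundary of the hull of radius $\rho$ of $\tau$, where the hull is computed inside the chain (the ball of radius $\rho$ around the entry edge, with the finite complementary components not meeting the exit edge filled in). The key observation is that $\partial B_r(T)$ is contained in the single block $T_{N(r)}$ ``straddling radius $r$'' (the one whose entry edge lies within distance $r$ but which is not entirely contained in $B_r$), together with at most the two edges $e_{N(r)-1},e_{N(r)}$: for the blocks strictly between $v$ and radius $r$ one has $T_i\subseteq B_r$, since any component of the complement of the ball cut off inside such a $T_i$ is finite and hence gets filled into the hull, so every vertex of $T_i$ other than the endpoints of $e_i$ is incident only to faces of $B_r$; and the blocks beyond radius $r$ are disjoint from $B_r$. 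Thus
\[
  |\partial B_r(T)|\ \le\ W\!\left(T_{N(r)}\right)+2 .
\]

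Next I would show that $W(T_1)$ has an exponential tail, along with the radial extent $D_1$ of the first block (the increase of the distance to $v$ between the entry and exit edges of $T_1$). Running the peeling procedure of~\cref{sec:peeling0} so that it exposes the successive hulls $B_\rho$ in order of $\rho$, a single block is precisely what is revealed during one excursion of the exposed-boundary-length process between two consecutive times at which it attains its running minimum --- these minima being exactly the cut edges producing~\eqref{eq:decomposition}. For $\alpha\in[0,2/3)$ the exposed-boundary-length process has strictly negative drift, and in one peeling step the probability of swallowing a region of perimeter $k$ decays exponentially in $k$ (both facts being explicit in terms of $\alpha$ via~\cite{AR13}). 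Standard excursion estimates for a random walk with negative drift and exponentially tailed increments then show that the maximal exposed perimeter during such an excursion, its length, and the number of layers it completes all have exponential tails; since $W(T_1)$ is dominated by the maximal exposed perimeter of the excursion and $D_1$ by the number of completed layers, we get constants $C,c_0>0$, depending only on $\alpha$, with $\H_\alpha(W(T_1)>n)\le Ce^{-c_0n}$ and $\H_\alpha(D_1>n)\le Ce^{-c_0n}$ for all $n$.

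Finally I would transfer these single-block bounds to $\partial B_r(T)$ uniformly in $r$. Since the blocks are i.i.d., the distances $R_i=\operatorname{dist}(v,e_{i-1})$ form essentially a renewal process, and $R_i=\sum_{j<i}(R_{j+1}-R_j)$ is independent of the $i$-th block. A union bound over $i$ together with the standard renewal estimate $\sum_{i\ge1}\H_\alpha(R_i\in[r-d,r))\le C'(d+1)$, valid uniformly in $r$ because the per-block displacements are non-degenerate with positive mean, then yields
\[
  \H_\alpha\big(|\partial B_r(T)|>n\big)\ \le\ C'\,\E\!\left[(D_1+1)\,\mathbbm{1}\{W(T_1)>n-2\}\right],
\]
and by Cauchy--Schwarz and the two exponential tails above the right-hand side is at most $e^{-cn}$ for a suitable $c>0$, which is the desired bound. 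I expect the main obstacle to be the single-block estimate: it uses subcriticality $\alpha<2/3$ in an essential way to make the drift of the peeling walk negative, and, because a peeling step may swallow an arbitrarily large finite region, one must use the exponential decay of the swallowing probabilities to upgrade negative drift into exponential control of the height and length of an excursion. The geometric reduction is conceptually clean but needs some care near the straddling cut edges, and one must check that running the peeling ``by layers'' really does expose each hull $B_\rho$ and that its internal boundary never exceeds the exposed perimeter along the way.
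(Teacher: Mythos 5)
Your overall route (i.i.d.\ block decomposition \eqref{eq:decomposition} plus a single-block estimate and a renewal/union bound) is genuinely different from the paper's proof, which instead runs the layered peeling and does an induction on $r$ using \cref{lem:time_estimate} and \cref{lem:step}. However, as written your argument has two real gaps. First, the key single-block estimate rests on the assertion that in one peeling step ``the probability of swallowing a region of perimeter $k$ decays exponentially in $k$''. In the subcritical regime this is false: by \eqref{eq: sub_pi} and \eqref{eq:constant1}, $p_i\sim c\,i^{-3/2}$, so the downward jumps of the exposed-boundary process are heavy-tailed with infinite mean, and the clean ``negative drift with exponentially tailed increments'' excursion theory you invoke is not available. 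The excursion \emph{maximum} can still be controlled, but for a different reason (upward steps are $+1$, and decrements may be truncated at a constant level while preserving negative drift); moreover the decrement of the exposed boundary is itself truncated at the number of currently exposed vertices on the relevant side of the peeling edge, so even the claim of ``strictly negative drift'' needs the threshold/domination argument that the paper carries out in \cref{lem:step} (the $n_0$ and $n_0/2$ device). As it stands, your single-block tail bound for $W(T_1)$ and for the excursion length is asserted from a premise that contradicts the model's own transition probabilities, and this is the heart of the proof.

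Second, the geometric reduction $|\partial B_r(T)|\le W(T_{N(r)})+2$ with a \emph{unique} straddling block is not justified and is doubtful as stated. A block $T_i$ with entry edge within distance $r$ need not be contained in $B_r$ even when later blocks still meet $B_r$: if a cut edge $e_m$ has endpoints at distances $r-1$ and $r$, faces of $T_m$ lying outside the ball can connect to infinity through the far endpoint of $e_m$ (a vertex of the ball may still be incident to non-ball faces on both sides), so several consecutive blocks whose cut edges sit at distance $\approx r$ can each contribute vertices to $\partial B_r(T)$. The event that many blocks straddle radius $r$ is itself exponentially unlikely, so the strategy can be repaired by bounding $|\partial B_r|$ by a sum of $W$'s over the straddling blocks and controlling their number, but that step is missing, and your renewal bound is set up only for a single straddling index $N(r)$. (There are also smaller issues you would need to handle: the hull inside a block is taken with respect to the distance to the root, which differs from the distance to the entry edge by an endpoint-dependent shift, and the i.i.d.\ structure and radial displacements of the blocks in \eqref{eq:decomposition} are asserted rather than proved in the paper, so your proof would need to establish them.)
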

The following central limit theorem shows that the volume growth is quadratic. This reconfirms the tree-like behaviour.

\begin{thm}\label{thm:hull_convergence}
Let $T$ is a map with law $\H_\alpha$ where $\alpha \in [0,2/3)$. Then 
\[
 \frac{|B_r(T)|}{r^2} \rightarrow S_{1/2}(\alpha)
\]
in distribution where $S_{1/2}(\alpha)$ is a stable random variable with parameter $1/2$ where the other parameters of $S_{1/2}(\alpha)$ depends only upon $\alpha$.
\end{thm}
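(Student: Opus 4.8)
The plan is to reduce the statement to a stable limit theorem for a renewal-stopped sum of i.i.d.\ heavy-tailed random variables, built on the block decomposition~\eqref{eq:decomposition}. Write $T=\bigcup_{i\ge 1}T_i$ as in~\eqref{eq:decomposition}, with $T_i$ i.i.d.\ finite, $e_0:=$ root edge and $e_i:=T_i\cap T_{i+1}$ the $i$-th single-edge cut-set supplied by~\cref{prop:cutset} (in order). The first ingredient is a comparison of graph distance with the chain structure: since each $e_i$ separates the root from infinity, every geodesic from the root to a vertex beyond $e_i$ crosses $e_1,\dots,e_i$ in order, so that, setting $\tilde L_j:=d_T(\text{root},e_j)-d_T(\text{root},e_{j-1})$, one has $d_T(\text{root},e_i)=\sum_{j=1}^i\tilde L_j$, and the sequence $(\tilde L_j)$ is stationary and $1$-dependent (each $\tilde L_j$ is a function of $(T_{j-1},T_j)$ only, up to the $\pm1$ choice of the near endpoint of $e_{j-1}$). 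The crucial quantitative input here is $\E[\tilde L_1]<\infty$: this should follow from the peeling construction of~\cref{sec:peeling0} underlying~\cref{prop:cutset}, which realises the cut-edges as renewal times of the peeling exploration and, in the subcritical regime, forces these renewals to occur with exponentially-tailed gaps (so the block ``lengths'' have exponential tails). Granting this, the ergodic theorem gives $N_r:=\#\{i:d_T(\text{root},e_i)\le r\}\sim\lambda_\alpha\, r$ almost surely, where $\lambda_\alpha:=\E[\tilde L_1]^{-1}\in(0,\infty)$.

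Second, I would identify $B_r$ with a union of whole blocks up to a negligible correction. Because $T$ is a triangulation, every finite component of the complement of the ball of radius $r$ is swallowed wholesale into $B_r$; since for $i\le N_r$ every part of $T_i$ attaches to the root-side of the map through points at distance at most $d_T(\text{root},e_i)+O(1)\le r+O(1)$, a routine (if finicky) argument using this wholesale absorption shows $\bigcup_{i\le N_r}T_i\subseteq B_r$ up to a tight number of ``boundary'' blocks, and conversely $B_r\setminus\bigcup_{i\le N_r}T_i$ is contained in the straddling block $T_{N_r+1}$ together with those few boundary blocks. Hence
\[
 |B_r|=\sum_{i=1}^{N_r}|T_i|+\mathrm{Err}_r,\qquad |\mathrm{Err}_r|\le \max_{|i-N_r|\le \kappa_r}|T_i|,
\]
where $\kappa_r=O_\P(1)$; using the tail estimate of the next step, $\P\big(\max_{|i-N_r|\le \kappa_r}|T_i|>\varepsilon r^2\big)\to 0$, so $\mathrm{Err}_r=o_{\P}(r^2)$.

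Third comes the analytic heart: proving $\P(|T_1|>x)\sim c_\alpha\,x^{-1/2}$ as $x\to\infty$ for some $c_\alpha=c_\alpha(\alpha)>0$. Here I would run the peeling exploration of~\cref{sec:peeling0} on a sample of $\H_\alpha$ and write $|T_1|$ as the total volume swallowed before the first cut-edge, that is (up to a multiplicative constant) the sum, over an exponentially-tailed number of peeling steps, of the i.i.d.\ sizes of the finite triangulations filled in at those steps; the tail of $|T_1|$ is then inherited from the tail of a single swallowed region, which one computes from the explicit peeling transition probabilities of~\cite{AR13}. The benchmark is $\alpha=0$, where~\cite{AR13} identifies the dual as the critical binary Galton--Watson tree conditioned to survive, so the blocks are functionals of critical Galton--Watson trees and $|T_1|$ has the classical total-progeny tail $\P(\cdot>x)\sim c\,x^{-1/2}$; the general subcritical case is a deformation of this and the exponent $-1/2$ is stable under the deformation (it is what pins the index of the limit to $1/2$).

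Finally, $|T_1|$ being in the domain of attraction of the one-sided $\tfrac12$-stable law, the classical stable limit theorem gives $n^{-2}\sum_{i=1}^{n}|T_i|\Rightarrow c_\alpha' S_{1/2}$ for a standard positive $\tfrac12$-stable variable $S_{1/2}$; combining this with $N_r\sim\lambda_\alpha r$ through the Anscombe-type theorem for randomly-indexed sums in a stable domain of attraction, and with $\mathrm{Err}_r=o_\P(r^2)$, yields
\[
 \frac{|B_r(T)|}{r^2}\;\Longrightarrow\;\lambda_\alpha^{2}\,c_\alpha'\,S_{1/2}\;=:\;S_{1/2}(\alpha),
\]
a $\tfrac12$-stable random variable whose scale parameter depends only on $\alpha$. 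I expect the third step to be the main obstacle: it requires the precise peeling transition probabilities of $\H_\alpha$ and a careful local-limit computation of the tail of a swallowed region, and it is the step that fixes the stable index at $1/2$. The finite-mean statement $\E[\tilde L_1]<\infty$ of the first step (which is responsible for the normalisation being $r^2$ rather than another power of $r$) and the wholesale-absorption bookkeeping of the second step are secondary but still need care; the exponential tail of $|\partial B_r|$ from~\cref{thm: boundary_ball_tight} is a convenient consistency check and also underlies the claimed exponential control of the block lengths.
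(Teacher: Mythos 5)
Your overall architecture (chain decomposition \eqref{eq:decomposition}, a renewal count $N_r\sim\lambda_\alpha r$, a $1/2$-stable limit for the block-volume sums, Anscombe) is a genuinely different route from the paper, which never touches the blocks: the paper decomposes the hull volume per \emph{peeling step}, computes the exact tail $\P(Y+I_{Y+1}>x)\sim c_\alpha x^{-1/2}$ explicitly from the free-triangulation moments (\cref{lem:tail}), gets a stable CLT at deterministic peeling times (\cref{lem:convergence_nthstep}), proves $\tau_r/r\to\gamma$ a.s.\ from positive recurrence of the chain $(X_{\tau_r},\Delta\tau_r)$ using \cref{thm: boundary_ball_tight} (\cref{lem:time_convergence}), and finishes with \cref{lem:finite_hull}. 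However, your version has two genuine gaps. The first is your Step 1, which is exactly what pins the normalisation at $r^2$: you justify $\E[\tilde L_1]<\infty$ by asserting that the cut-edges are renewal times with exponentially-tailed gaps, ``so the block lengths have exponential tails.'' That last claim is false: the block volumes must have tail $\asymp x^{-1/2}$ (otherwise no $1/2$-stable limit at scale $r^2$), and already at $\alpha=0$ (no internal vertices) the volume of a block is comparable to the length of boundary it spans, so block lengths are heavy-tailed with infinite mean, not exponential. Moreover \cref{prop:cutset} only produces cuts with probability of order $1/k$ at step $k$; it does not give exponential gaps. What you actually need is control of the \emph{graph distance} across a block between consecutive cut edges, and nothing in your sketch addresses this; it is a nontrivial metric statement (a block of boundary length $\ell$ is tree-like of diameter roughly $\sqrt\ell$, which naively gives a tail of order $1/y$ for the crossing distance, i.e.\ borderline infinite mean, so finiteness of $\E[\tilde L_1]$ needs a real argument). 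The paper's hull-based exploration sidesteps this entirely because it indexes directly by graph distance.

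The second gap is your Step 3, which you flag but underestimate: to get $\P(|T_1|>x)\sim c_\alpha x^{-1/2}$ you cannot simply say the block is a sum over an exponentially-tailed number of peeling steps of i.i.d.\ swallowed volumes whose tail is inherited, because the number of steps in a block is strongly dependent on the step sizes --- a block terminates precisely when a large swallow collapses the exposed boundary to a single edge, so the standard random-sum tail lemma does not apply as stated, and the stopping rule biases exactly the large values that drive the tail. (Strictly, regular variation of index $1/2$ with no slowly varying correction is needed for the clean $r^2$ scaling in \cref{thm:hull_convergence}.) Your Step 2 bookkeeping is fine in principle but is conditional on Step 1, since the straddling block and the identification $\bigcup_{i\le N_r}T_i\subseteq B_r$ both use that the cut edges within distance $r$ have linearly many indices. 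So as written the proposal does not constitute a proof; repairing it would require establishing the finite-mean crossing-distance statement and the precise block-volume tail, at which point it would be at least as much work as the paper's peeling-step argument.
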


\subsection{Percolation}  \label{sec:percolation}
The interest in studying percolation on random maps stems from the need to understand the connection between statistical physics models on random surfaces and the Euclidean plane. Bernoulli percolation is the simplest of such models. We are mainly interested in quenched statements about Bernoulli site percolation on random triangulations: take a half planar triangulation $T$ with law $\H_\alpha$ and color each vertex independently black with probability $p$ or white with probability $1-p$. A black (resp. white) cluster is a connected component induced by the black (resp. white) vertices on the map. Given a half planar map,	 denote the percolation measure on it by $P_p$ and the expectation by $E_p$. Let $\P_p$ denote the overall measure of percolation configuration on a random map with law $\H_\alpha$ and let $\E_p$  denote the expectation with respect to the measure $\P_p$. It is understood that in these notations there is a hidden parameter $\alpha$ which we shall drop to lighten notation. As usual, define $p_c$ to be the infimum over $p$ such that there exists an infinite black cluster $P_p$-almost surely. Further, we are also interested in 
\[
p_u = \inf\{p\in (0,1]: \text{ there exists a unique infinite cluster $P_p$-almost surely}\}
\]

 Percolation on random maps has been an object of interest for some time \cite{UIPT2,UIPT3,AC13}. For example, it  is shown (see \cite{UIPT3}) that $p_c =p_u = 1/2$ almost surely for site percolation on the uniform infinite half planar triangulation and almost surely clusters are finite at criticality. Geometry of the critical cluster in the UIPT is studied in \cite{CK13}. We want to understand how the behaviour of percolation clusters change if we step away from the critical regime. However it is immediate to see via \cref{prop:cutset} that percolation is uninteresting in the subcritical maps (in this case $p_c=1$ almost surely.) It was conjectured (see \cite{bperc96}) by Benjamini and Schramm that on non-amenable quasitransitive graphs, $p_c < p_u$. For supercritical maps, because of anchored expansion as depicted by \cref{thm:anchored}, we would expect a similar behaviour. 
 \begin{thm}\label{thm:site}
Fix $\alpha \in (2/3,1)$. Then $\H_\alpha$-almost surely,
\begin{mylist}
 \item $p_c = \frac{1}{2}\left(1-\sqrt{3-\frac{2}{\alpha}} \right)$
 \item $p_u = \frac{1}{2}\left(1+\sqrt{3-\frac{2}{\alpha}} \right).$
\end{mylist}
Also $\H_\alpha$-almost surely, there is no infinite black cluster $P_{p_c}$-almost surely and there is an unique infinite black cluster $P_{p_u}$-almost surely.
\end{thm}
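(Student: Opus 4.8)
The plan is to analyse site percolation on $\H_\alpha$ through the peeling process, supplemented by a planar-duality argument and a Burton--Keane-type uniqueness argument; throughout, $\alpha\in(2/3,1)$ is fixed.

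To locate $p_c$ I would exploit that the root vertex lies on the boundary, so that $\theta(p):=\P_p(\text{the root is black and lies in an infinite black cluster})$ is controlled by the black cluster of a boundary segment, which I would explore by peeling. By \cref{thm:main} and the domain Markov property, each peeling step is conditionally independent of the past given the current combinatorial state: with probability $\alpha$ the revealed triangle has a fresh apex, independently coloured black with probability $p$; with the complementary probability the apex sits on the already-revealed boundary and swallows a finite triangulated region whose perimeter has the explicit law computed in \cite{AR13}. A suitable scalar functional of this exploration --- essentially the length of the still-unexplored part of the cluster's boundary --- is then a random walk $(S_n)$ with i.i.d.\ increments, and the cluster is infinite precisely when the exploration never terminates. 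For $\alpha>2/3$ the swallowing law of \cite{AR13} has exponential tails, so these increments are integrable with an explicit mean $\mu(p,\alpha)$; the exploration terminates almost surely iff $\mu(p,\alpha)\le 0$, and I expect solving $\mu(p,\alpha)=0$ to yield exactly $p=\tfrac12\bigl(1-\sqrt{3-2/\alpha}\bigr)$. Consequently $\theta(p)=0$ for $p\le\tfrac12(1-\sqrt{3-2/\alpha})$, which together with translation invariance and a short hull-exploration argument rules out any infinite black cluster; for larger $p$ the walk escapes with positive probability, so $\theta(p)>0$ and, by ergodicity, an infinite black cluster exists $\P_p$-a.s. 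This pins down $p_c$, and since $\mu(p_c,\alpha)=0$ makes $(S_n)$ recurrent, there is no infinite black cluster $P_{p_c}$-a.s.

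For $p_u$ and the uniqueness statements I would argue as follows. Because the map itself carries no colours, percolation at parameter $p$ with the two colours interchanged is percolation at parameter $1-p$; combined with the previous step this shows that an infinite \emph{white} cluster exists $\P_p$-a.s.\ iff $p<1-p_c=\tfrac12(1+\sqrt{3-2/\alpha})$, with none at $p=1-p_c$. I then plan to use a planar-duality dichotomy for one-ended half-planar triangulations: (i) if $p>p_c$ and there is no infinite white cluster, the infinite black cluster is unique, since two disjoint infinite black clusters would be separated by an infinite white dual path, and such a path cannot escape to infinity along the randomly two-coloured boundary line, so it would have to be genuinely infinite --- contradicting the absence of an infinite white cluster; and (ii) if $p_c<p<1-p_c$, so that infinite clusters of both colours coexist, a Burton--Keane / cluster-indistinguishability argument, using the translation invariance and ergodicity of $\H_\alpha$ together with a finite-modification mass-transport argument (morally reflecting the non-amenability from \cref{thm:anchored}), forces infinitely many infinite black clusters. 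Since $p_c<1/2$, (ii) gives non-uniqueness on $(p_c,1-p_c)$ and (i) gives uniqueness on $[1-p_c,1)$, so $p_u=1-p_c=\tfrac12(1+\sqrt{3-2/\alpha})$; and at $p=p_u=1-p_c$ there is no infinite white cluster while $p_u>p_c$ forces an infinite black one, so (i) gives uniqueness $P_{p_u}$-a.s. Finally, the quenched form of every statement follows from the annealed form via the ergodicity of $\H_\alpha$ under translations of the root.

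The peeling computation underlying $p_c$ I expect to be lengthy but essentially mechanical once the $\H_\alpha$ peeling weights of \cite{AR13} are available. The hard part will be the ingredients for $p_u$: proving a correct planar-duality statement for separating clusters in the \emph{half}-planar setting where the boundary line is itself randomly two-coloured, and running a Burton--Keane-type non-uniqueness argument under only the translation invariance/ergodicity available for $\H_\alpha$ rather than full unimodularity. The borderline cases $p=p_c$ and $p=p_u$ --- showing recurrence of the interface walk and deducing, respectively, the absence and the uniqueness of an infinite monochromatic cluster without appealing to continuity of $\theta$ --- are the other delicate points.
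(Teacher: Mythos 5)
Your treatment of $p_c$ and of the uniqueness side of $p_u$ matches the paper: the paper explores the root's black cluster by peeling at the black/white boundary edge, the number of black boundary vertices is a random walk with i.i.d.\ increments whose mean $\alpha p-\tfrac12\bigl(\alpha-\sqrt{\alpha}\sqrt{3\alpha-2}\bigr)$ vanishes exactly at $p=\tfrac12\bigl(1-\sqrt{3-2/\alpha}\bigr)$ (\cref{lem:computation}), termination of the exploration is equivalent to finiteness of the cluster, ergodicity gives existence above the threshold, and zero drift kills the cluster at $p_c$; likewise your duality argument ``two infinite black clusters force an infinite white cluster'' is precisely \cref{cor:p_uupper}, which gives $p_u\le\tfrac12\bigl(1+\sqrt{3-2/\alpha}\bigr)$ and uniqueness at $p_u$.

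The genuine gap is your non-uniqueness step on $(p_c,1-p_c)$, i.e.\ the lower bound $p_u\ge\tfrac12\bigl(1+\sqrt{3-2/\alpha}\bigr)$. Invoking a ``Burton--Keane / cluster-indistinguishability'' argument is the wrong tool here: Burton--Keane is a device for restricting the number of infinite clusters to $\{0,1,\infty\}$ and, with amenability, for proving \emph{uniqueness}; it cannot by itself rule out the value $1$, which is exactly what you need. The indistinguishability/mass-transport machinery you gesture at requires unimodularity or (quasi-)transitivity, neither of which is available for these maps, whose only symmetry is translation invariance and ergodicity of the root along the boundary; even the planar Benjamini--Schramm dichotomy ``coexistence of both colours implies infinitely many clusters of each'' is proved for transitive unimodular planar graphs and does not transfer without substantial new work to a random half-planar map with a two-coloured boundary. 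The paper bypasses all of this with a concrete interface argument (its Algorithm~2 and \cref{lem:interface}): peeling at the boundary edge separating a white segment from a black segment, both segment lengths evolve as random walks with i.i.d.\ increments of positive mean whenever $p\in\bigl(\tfrac12(1-\sqrt{3-2/\alpha}),\tfrac12(1+\sqrt{3-2/\alpha})\bigr)$ (the black one by \cref{lem:computation}, the white one by colour symmetry), so with positive probability both stay positive forever and the interface through that boundary edge is infinite. Ergodicity under boundary translations then gives a positive linear density of boundary edges carrying infinite interfaces (this is \cref{thm:density}), hence infinitely many infinite black and white clusters meeting the boundary, which is what actually forces $p_u\ge\tfrac12\bigl(1+\sqrt{3-2/\alpha}\bigr)$. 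Replacing your Burton--Keane paragraph by such an interface (or an equivalent coexistence-to-multiplicity) argument is necessary to close the proof.
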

Note that $p_c<p_u$ almost surely in the regime $\alpha \in  (2/3,1)$. It is interesting to note that as $\alpha \to 2/3$, both $p_c \to 1/2$ and $p_u \to 1/2$. But in the regime $(p_c,p_u)$ we have more than one infinite cluster. One can easily conclude via ergodicity of these maps with respect to translation of the root along the boundary (see \cite{AR13}, Proposition 1.3) that the number of infinite black or white clusters is actually infinite almost surely. The next Theorem shows that the number of black or white infinite cluster touching the boundary has positive density along the boundary.

Distance between two boundary vertices \textbf{along the boundary} is the number of edges on the boundary between them.
 Let $W_k^\infty,B_k^\infty$ be the number of infinite white and black clusters respectively which share at least a vertex which is within distance $k$ from the root along the boundary.
 \begin{thm}\label{thm:density}
 Fix $\alpha \in (2/3,1)$ and suppose $p \in (p_c,p_u)$ where $p_c$,$p_u$ are as in \cref{thm:site}. There exists a positive constant $\rho > 0$ such that almost surely,
\begin{equation}
 \frac{W_k^\infty}{k} \to \rho,  \hspace{5mm} \frac{B_k^\infty}{k} \to \rho
\end{equation}
\end{thm}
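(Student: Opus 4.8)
\emph{Sketch of the intended proof.}
The plan is to run a subadditive ergodic argument along the boundary, the only map–specific input being a planarity lemma that controls how infinite clusters can wind around the boundary. Index the boundary vertices of $T$ as $(v_j)_{j\in\Z}$ with root edge $(v_0,v_1)$, let $\theta$ be the unit shift of the root along the boundary, and note that the joint law $\P_p$ of the pair (map, colouring) is $\theta$-invariant (translation invariance of $\H_\alpha$ together with the i.i.d.\ colouring) and $\theta$-ergodic (ergodicity of $\H_\alpha$, see \cite{AR13}, Proposition~1.3, plus the fact that an i.i.d.\ colouring over an ergodic base creates no new invariant events). For $a<b$ let $N_{a,b}$ be the number of distinct \emph{infinite} white clusters meeting $\{v_a,\dots,v_{b-1}\}$. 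Then $N_{a,c}\le N_{a,b}+N_{b,c}$ for $a<b<c$, $N_{a+1,b+1}=N_{a,b}\circ\theta$, and $N_{0,1}\le 1$, so Kingman's subadditive ergodic theorem together with ergodicity gives, almost surely,
\[
 \frac{N_{0,n}}{n}\ \longrightarrow\ \rho_W:=\lim_{n\to\infty}\frac{\E_p[N_{0,n}]}{n},
\]
and, applying the theorem to the reversed process, also $N_{-n,0}/n\to\rho_W$.

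The planarity lemma is: \emph{for any boundary gap, at most one infinite white cluster contains vertices on both of its sides.} Indeed, suppose two distinct such clusters $C\ne C'$ exist for a gap $g$. For $C$ pick the extreme boundary vertices $v_a,v_b$ of $C$ on the two sides of $g$ (so $v_{a+1},\dots,v_{b-1}\notin C$) and a simple path $\pi\subseteq C$ from $v_a$ to $v_b$; then $\pi$ together with the boundary arc $v_a\cdots v_b$ is a Jordan curve whose bounded face $R$ lies in the open upper half plane, and $\overline R$ is finite since the embedding has no accumulation point. Do the same for $C'$, with data $v_{a'},v_{b'},\pi',\overline{R'}$. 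The four points $v_a,v_b,v_{a'},v_{b'}$ are distinct (as $C\cap C'=\emptyset$) and the open boundary-intervals $(v_a,v_b),(v_{a'},v_{b'})$ both contain $g$, hence either they interleave — forcing $\pi$ and $\pi'$ to cross, impossible for disjoint embedded paths — or one is nested in the other, say $(v_{a'},v_{b'})\subset(v_a,v_b)$; in that case $\pi'\subseteq\overline R$, and every vertex of $C'$ inside $\overline R$ has all its neighbours inside $\overline R$ (its boundary neighbours lie on the arc, and its other edges go up into $R$ and cannot leave $\overline R$ without crossing $\pi$), so the infinite cluster $C'$ is confined to the finite set $\overline R$, a contradiction. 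Applying this to the gap between $v_{-1}$ and $v_0$ and using inclusion–exclusion, the overlap $O_k:=N_{-k,0}+N_{0,k+1}-N_{-k,k+1}$, which counts infinite white clusters straddling that gap within distance $k$, satisfies $O_k\le 1$; hence $N_{-k,0}+N_{0,k+1}-1\le N_{-k,k+1}\le N_{-k,0}+N_{0,k+1}$, so $N_{-k,k+1}/k\to 2\rho_W$ almost surely. Since $W_k^\infty=N_{a_k,b_k}$ for an interval of $2k+O(1)$ boundary vertices around the root, the same sandwich gives $W_k^\infty/k\to 2\rho_W$ a.s., and the statement for $B_k^\infty$ follows verbatim with the colours exchanged, giving $B_k^\infty/k\to 2\rho_B$ (here $1-p>p_c$, resp.\ $p>p_c$, ensures infinite white, resp.\ black, clusters exist by \cref{thm:site}).

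It remains to show $\rho_W=\rho_B>0$. Writing each cluster counted by $N_{0,n}$ through its smallest touchpoint in $\{v_0,\dots,v_{n-1}\}$ gives $\E_p[N_{0,n}]=\sum_{j=0}^{n-1}q_j$ with $q_j:=\P_p(v_0$ lies in an infinite white cluster meeting no $v_i$, $-j\le i<0)$ non-increasing in $j$, whence $\rho_W=q_\infty=\P_p(v_0$ is the smallest boundary vertex of some infinite white cluster$)$. Positivity of $q_\infty$ is the crux: via a finite-energy (insertion-tolerance) argument combined with the domain Markov property one reveals, with positive probability, a finite simply connected region $Q$ around the root whose revealed colouring forms a black ``wall'' separating $v_0$ from $\{v_i:i<0\}$ and along whose unexplored boundary a fresh copy of $\H_\alpha$ is attached to $v_0$ on the right; inside that fresh copy an infinite white cluster exists almost surely and, by finite energy, is joined to $v_0$ with positive probability, the resulting cluster automatically avoiding $\{v_i:i<0\}$. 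This gives $\rho_W>0$, and symmetrically $\rho_B>0$. Finally $\rho_W=\rho_B$ — the densities of smallest-boundary-touchpoints of infinite white and of infinite black clusters coincide — should follow from the self-matching (duality) property of site percolation on triangulations, which pairs an infinite white cluster with an infinite black cluster across each interface escaping to infinity from the boundary.

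I expect the positivity $q_\infty>0$ to be the main obstacle: the subadditive machinery is soft and the straddling lemma is a clean topological statement, but ruling out that infinite clusters touch the boundary only with zero density genuinely requires quantitative input on the supercritical geometry near the boundary, which is exactly where the domain Markov property and the existence half of \cref{thm:site} must be invoked carefully. The identification $\rho_W=\rho_B$ is a secondary point needing the triangulation-specific duality.
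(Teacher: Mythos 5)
Your subadditive route (Kingman along the boundary shift, the straddling lemma, and the identification $\rho_W=\lim_n \E_p[N_{0,n}]/n=q_\infty$) is sound as far as it goes and is genuinely different from the paper, which instead counts infinite interfaces emanating from boundary edges and applies Birkhoff's ergodic theorem to the event $\mathcal I$ that an infinite interface starts at the root edge. But there is a real gap exactly where you predicted it: the positivity $q_\infty>0$. The finite-energy construction you sketch cannot work as stated. The revealed region $Q$ is finite, so the ``fresh copy'' of $\H_\alpha$ attached along its unexplored boundary still contains all but finitely many of the original boundary vertices $v_i$, $i<0$, on its boundary; a black wall inside $Q$ only blocks white connections from $v_0$ to negative vertices that pass through $Q$. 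The infinite white cluster of the fresh copy that you join to $v_0$ is in no way prevented from reaching $v_i$ for $i\ll 0$ --- indeed, by the very theorem being proven, infinitely many infinite white clusters touch that ray --- so the resulting cluster does not ``automatically'' avoid $\{v_i:i<0\}$, and demanding that it do so is a translate of the event whose positivity you are trying to prove: the argument is circular. What is needed is an \emph{infinite} barrier: positive probability that an infinite interface starts at the edge $(v_{-1},v_0)$ with $v_0$ white on its right, which topologically confines the white cluster of $v_0$ away from the negative boundary ray. This is precisely the paper's \cref{lem:interface}, proved via the exploration ``Algorithm 2'' and the positive-drift computation $\E_p(B)>0$ for $p$ in the given range (\cref{lem:computation}); no soft insertion-tolerance argument substitutes for that quantitative input, though once \cref{lem:interface} is available it plugs directly into your scheme via $q_\infty\ge \P_p(\mathcal I,\ v_0 \text{ white},\ v_{-1}\text{ black})>0$.

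A secondary gap: the theorem asserts a \emph{common} constant for white and black, so $\rho_W=\rho_B$ must actually be proved; your appeal to ``self-matching duality'' is not an argument, and for $p\neq 1/2$ there is no colour symmetry to invoke. The paper gets this for free from the identities $W_k^\infty+B_k^\infty=E_k+1$ and $|W_k^\infty-B_k^\infty|\le 1$: infinite clusters touching the boundary alternate in colour, consecutive ones being separated by an infinite interface crossing a boundary edge. If you keep your route, prove this alternation statement (it is close in spirit to your straddling lemma) rather than leaving it as a remark.
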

 The constant $\rho$ is in fact half of the probability of the event of having an infinite interface starting from a boundary edge (see \cref{sec:percolation} for more details.) 

A \textbf{ray} in an infinite percolation cluster is a semi-infinite
simple path in the cluster starting from a vertex closest to the root (with ties broken arbitrarily). Two rays $r_1$ and $r_2$ are equivalent if there is another ray $r_3$ which intersect both $r_1$ and $r_2$ infinitely many times. An \textbf{end} of a cluster is an equivalence class of rays. Let $\END(\mathcal C)$ denote the space of ends of a percolation cluster $\mathcal C$. We shall define a metric on $\END(\mathcal C)$ as follows: for any two rays $\xi$ and $\eta$ on $\mathcal C$, define the distance between them as
\begin{multline} 
 d(\xi,\eta) = \inf\{1/n, n=1 \text{ or } \forall X \in \xi, \forall Z \in \eta, \exists \text{ a component $K$ of }\\ \mathcal C \setminus B_n,
 |X \setminus K| + |Z \setminus K| < \infty\}\nonumber
\end{multline}
It is easy to deduce that $\END(\mathcal C)$ does not depend on the choice of the vertex around which we consider the graph-distance balls and that $\END(\mathcal C)$ equipped with this metric is compact.

\begin{thm}\label{thm:ends}
Fix $\alpha \in  (2/3,1)$. Assume $p_c,p_u$ are as in \cref{thm:site} and fix $p \in (p_c,p_u)$. Then $\H_\alpha$-almost surely,
 the subgraph formed by each infinite cluster has no isolated end and has continuum many ends $P_p$-almost surely. 
\end{thm}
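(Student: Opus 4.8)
The plan is to reduce the whole statement to a single assertion: almost surely, no infinite cluster has an isolated end. This suffices because $\END(\mathcal C)$ is compact (as noted above) and nonempty (an infinite connected locally finite graph contains a ray), so once it has no isolated point it is a nonempty perfect compact metric space, hence contains a homeomorphic copy of the Cantor set and has cardinality $2^{\aleph_0}$, while its separability forces cardinality at most that of the continuum. Thus ``no isolated end'' already forces ``continuum many ends'', and both halves of the theorem collapse to this one property. Unravelling the metric, an end $\xi$ of $\mathcal C$ is isolated iff for some $n$ the infinite component of $\mathcal C\setminus B_n$ that eventually contains a representative ray of $\xi$ is itself one-ended; equivalently, $\mathcal C$ has no isolated end iff for every finite $F\subset V(\mathcal C)$ every infinite connected component of $\mathcal C\setminus F$ has at least two ends. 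So the goal becomes: $\H_\alpha$- and $P_p$-a.s., for every infinite cluster $\mathcal C$ and every finite $F$, each infinite component of $\mathcal C\setminus F$ is multiply ended.

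\emph{Step 1: each infinite cluster has infinitely many ends a.s.} By \cref{thm:site} we are in the non-uniqueness regime $p\in(p_c,p_u)$. Ergodicity of $\H_\alpha$ (with respect to boundary translations) together with the (trivial) insertion tolerance of Bernoulli site percolation yields the Burton--Keane dichotomy in this boundary-stationary setting: the number of infinite clusters is $\infty$, and the number of ends of any given infinite cluster lies a.s. in $\{1,2,\infty\}$. To rule out $1$ and $2$, observe that with positive probability at least three distinct infinite clusters meet a large ball around the root; flipping a suitable finite set of sites inside that ball to black glues them into one cluster while preserving all three ``arms'' (adding black vertices only grows clusters), producing a cluster with at least three ends with positive probability and contradicting ``at most $2$ ends a.s.''. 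Ergodicity then upgrades ``infinitely many ends with positive probability'' to ``a.s.\ every infinite cluster has infinitely many ends''.

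\emph{Step 2: no isolated ends.} Infinitely many ends is not enough (a single ray grafted onto the $3$-regular tree has infinitely many ends, one isolated), so a genuinely new input is needed. I would invoke the indistinguishability theorem of Lyons--Schramm for Bernoulli percolation in the non-uniqueness phase, in the stationary form appropriate to half-planar maps: ``$\mathcal C$ has an isolated end'' is a rerooting-invariant intrinsic property of the cluster, hence a.s.\ it holds for all infinite clusters or for none. It therefore suffices to exhibit, with positive probability, a single infinite cluster with no isolated end, and for that I would run a peeling exploration of the cluster of the root conditioned to be infinite. Using the supercritical geometry of $\H_\alpha$ (\cref{thm:vol_h_triang}), the bound $p<p_u$, and --- crucially --- the domain Markov property to restart the exploration at each newly discovered branch as an independent copy of the whole process, one shows that along every infinite line of descent the exploration splits into two well-separated infinite sub-explorations at a positive rate; hence no infinite piece $\mathcal C\setminus F$ can be one-ended. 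Combining with indistinguishability and the reduction of the first paragraph finishes the proof. (Equivalently, one may try to verify directly the hypotheses of the general principle that clusters in the non-uniqueness phase have continuum many, non-isolated ends.)

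The main obstacle is precisely the branching estimate in Step 2: one must show that the peeling of a percolation cluster started from a finite boundary segment, with the complementary map distributed as $\H_\alpha$, is a genuinely supercritical branching-type process with a quantitative lower bound on the per-step probability of splitting off a fresh infinite arm --- and that this persists uniformly over all arms of all clusters (this uniformity is exactly what indistinguishability buys us). The domain Markov property legitimizes the restart-at-a-branch step, but extracting the quantitative split requires controlling how percolation interfaces in the bulk enclose finite regions at a positive rate when $\alpha>2/3$; this is the technical heart. A secondary point is pinning down the correct (boundary-stationary, rather than unimodular-transitive) version of indistinguishability and of the mass-transport principle for half-planar triangulations, for which I would rely on the framework of \cite{AR13}.
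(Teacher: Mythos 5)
Your reduction of the theorem to the single statement ``no infinite cluster has an isolated end'' (via compactness of $\END(\mathcal C)$ and the perfect-set/Cantor--Bendixson argument) is exactly the reduction the paper makes, and that part is fine. The genuine gap is that you never actually prove the no-isolated-end statement: both pillars you lean on are unavailable or unproven. First, the Lyons--Schramm indistinguishability theorem (and the Burton--Keane/Newman--Schulman machinery you use in Step 1) is a theorem about unimodular transitive or unimodular random rooted graphs; here the only stationarity is translation invariance of the root \emph{along the boundary}, a $\Z$-action on maps rooted at a boundary edge, and no ``boundary-stationary'' indistinguishability or mass-transport principle is provided by \cite{AR13} or anywhere else -- you cannot cite it into existence, and without it neither your transfer from ``one cluster with no isolated end with positive probability'' to ``all clusters, a.s.'' nor your $\{1,2,\infty\}$ dichotomy for ends is justified. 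Second, the ``branching estimate'' that you yourself identify as the technical heart of Step 2 is precisely the content that has to be proved, and your proposal leaves it as a heuristic about explorations splitting ``at a positive rate.''

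For comparison, the paper avoids all of this soft machinery and argues directly. It first proves (\cref{lem:one_end}) that the root's infinite cluster a.s.\ has at least two ends: one explores the two interfaces bounding the black segment through the root, shows via the drift computation of \cref{lem:computation} and a large-deviation argument (\cref{lem:finite_swallow}) that the root edge is swallowed only finitely often, so the explored complements $T_n$ converge to a half plane $T_\infty$ with all-black boundary; by the domain Markov property and \cref{thm:site}, $T_\infty$ a.s.\ contains an infinite \emph{white} cluster, which separates two infinite black arms and forces a second end. Then isolated ends of arbitrary clusters are excluded (\cref{cor:isolated_end}) by noting that an isolated end would yield some $r$ for which $T\setminus B_r$ -- again distributed as $\H_\alpha$ by the domain Markov property -- has a one-ended infinite cluster meeting its boundary, contradicting \cref{lem:one_end}. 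If you want to salvage your outline, the domain-Markov exploration argument of \cref{lem:one_end} is exactly the quantitative ``split into two infinite arms'' input you were missing, and it also removes any need for indistinguishability.
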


Also for two sequences $\{a_n\}$ and $\{b_n\}$, $a_n \sim b_n$ means that $a_n/b_n$ converges to $1$ as $n \rightarrow \infty$. Also $a_n = O(b_n)$ means that there exists a  contant $M > 0$ independent of $n$ such that $|a_n|<M|b_n|$. For any sequence $\{a_n\}_{n\ge 1}$, $\Delta a_n:=a_{n+1} - a_n$. Further, the positive constant $c$ might change from one line to the next, but we still denote them by $c$ for simplicity.

\paragraph{Acknowledgement:} The author is grateful to Omer Angel for several illuminating discussions throughout the course of the research. The author is also indebted to Nicolas Curien for several insightful conversations during the authors visit to Saint-Flour summer school in 2011. The author also thanks the anonymous referee for a careful reading of the manuscript.

\tableofcontents

\section{Background}\label{sec:notation}

\begin{figure}
 \centering{\includegraphics[scale=.7]{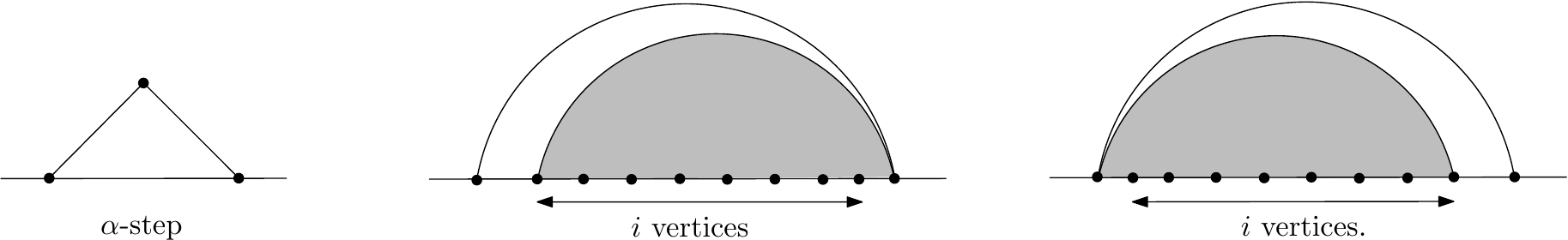}}
\caption{Left: An $\alpha$-step. Centre: A step of the form $(R,i)$. Right: A step of the form $(L,i)$. The gray area denotes some unspecified triangulation.} \label{fig:forms} 
  \end{figure}
The goal of this section is to review in more detail the phase transition observed in \cite{AR13} and describe   the process of peeling in \cref{sec:peeling0} which will play a central role throughout this paper. Also, we collect some preliminary results which we shall need later.

We first define certain events which will be used repeatedly in what follows. Following the notation of \cite{AR13}, an $\alpha$-step is the event in which the third vertex of the triangle incident to the root edge is an internal vertex. A step of the form $(L,i)$ (resp. $(R,i)$) is the event that the triangle incident to the root edge is attached to a vertex on the boundary which is at a distance $i$ to the left (resp. right) of the root edge along the boundary (see \cref{fig:forms}). We shall also talk about such events with the root edge replaced any fixed edge on the boundary of the map. Because of translation invariance, the measures of such events do not depend on the edge we want to consider and it was also shown in \cite{AR13} that for any fixed $i \ge 1$, the measures of $(L,i)$ and $(R,i)$ are the same. Let $p_{i,k}$ denote the measure of the event that a step of the form $(L,i)$ or $(R,i)$ occurs and the triangle incident to the root edge separates $k$ internal vertices of the map from infinity. Let $p_i = \sum_kp_{i,k}$. We record here some computations done in \cite{AR13}. Following the notation of \cite{AR13}, we denote by $\beta$ the probability of the event of the form $(R,1)$ with no internal vertex in the $2$-gon enclosed by the triangle incident to the root edge. 

\begin{description}
 \item [Subcritical $\alpha <2/3$:] 
\begin{align} 
\beta & = \frac{(2-\alpha)^2}{16} \label{eq:betasub}\\
p_{i,k} &= \frac{2}{4^{i}}\phi_{k,i+1}\left(1-\frac{\alpha}{2}\right)^{2i}\left(\frac{\alpha}{4} \left(1-\frac{\alpha}{2}\right)^2\right	)^k\label{eq:sub_pik}\\
 p_i &= \frac{2}{4^i}\frac{(2i-2)!}{(i-1)!(i+1)!} ((1-3\alpha/2) i+1)\label{eq: sub_pi}
\end{align}

\item[Supercritical $\alpha > 2/3$:] 
\begin{align} 
\beta & = \frac{\alpha(1-\alpha)}{2}\label{eq:betasuper}\\
p_{i,k} &= 2 \phi_{k,i+1}\alpha^{i+2k}\left(\frac{1-\alpha}{2}\right)^{i+k}\label{eq:super_pik}\\
 p_i &= \frac{2}{4^i}\frac{(2i-2)!}{(i-1)!(i+1)!}\left(\frac{2}{\alpha}-2\right)^{i}((3\alpha-2)i + 1) \label{eq: super_pi}
\end{align}
\end{description}
One can verify using Stirling's formula, that the asymptotics of $p_i$ are as follows:
\begin{itemize}
 \item One can easily compute using Stirling's formula that
\begin{equation}
 p_i \sim \frac{1-3\alpha/2}{2\sqrt{\pi}} i^{-3/2} \label{eq:constant1}
\end{equation}
So the tail of $p_i$ is heavy with infinite expectation. 

\item $p_i\sim c \left(\frac{2}{\alpha}-2\right)^{i} i^{-3/2}$ for supercritical $\H_{\alpha}$ for some constant $c>0$. So $p_i$ has exponential tail.
\end{itemize}

\medskip
\subsection{Peeling}\label{sec:peeling0}
In this section we shall describe the concept of peeling which is the central tool used in this paper. Peeling has its roots in the
physics literature \cite{Wat,Amb}, and was used in the present form in
\cite{UIPT2}.\ It is useful for analyzing many aspects of planar maps which include percolation, random walks, volume growth and conformal properties (see \cite{UIPT2,AC13,curien13glimpse,menard2013perc}). In this paper we will use this procedure to analyze the geometry of domain Markov half planar triangulations. Let us remark here that this procedure can be used to analyze not only triangulations, but also other classes of random maps (see \cite{AR13,AC13,BC11}).

Suppose we have a sample $T$ from $\H_\alpha$ for some $\alpha \in (0,1]$. We will construct a growing sequence of simply connected sub-maps $P_n$ with a simple boundary and containing the root. We define $T_n$ to be the set of finite degree faces in $T$ not in $P_n$ along with the edges and vertices incident to them (note that $T_n$ is also a half planar triangulation because $P_n$ is simply connected with a simple boundary.) We will sometimes refer to $T_n$ as the complement of $P_n$. Start with $P_0$ to be empty and $T_0 = T$. At the $n$th peeling step, we pick an edge $e_n$ on the boundary of $T_n$ and add the triangle in $T_n$ incident to $e_n$ along with the finite component of the complement, if there is any (note that there can be at most one such component), to $P_{n+1}$. Define $T_{n+1}$ to be the complement of $P_{n+1}$. The root of $P_n$ is the root of $T$ and the root of $T_{n+1}$ is defined as the leftmost edge in the boundary of $P_n$ which is incident to at least one face of $T_n$ and is oriented from left to right. Note that at every step, the choice of $e_n$ does not depend on $P_n$ and for any such choice, $T_n$ is independent of $P_n$ and is distributed as $\H_{\alpha}$ via the domain Markov property. By abuse of notation, sometimes we shall re-root $T_n$ on some other edge on the boundary of $T_n$ and the distribution of $T_n$ does not change by translation invariance.

Notice that in any step of peeling on an edge $e$, there are essentially three possible choices for the peeling steps. Either it is an $\alpha$-step where the third vertex of the triangle incident to $e$ is an internal vertex of the unexplored part. Such a step has probability $\alpha$. Otherwise, it is of the form $(L,i)$ or $(R,i)$ (see \cref{fig:forms}) for some $i\ge 1$ and such an event occurs with probability $p_i$. Notice that when an event of the form $(L,i)$ or $(R,i)$ occurs we divide the whole triangulation into a finite and an infinite component. Conditioned on the triangulation in the finite component as well as all the triangulation revealed so far, the infinite component is again distributed as $\H_\alpha$ by the domain Markov property. The distribution of the triangulation of the finite component can be easily computed using \cref{eq:super_pik,eq:sub_pik}. On the event $(R,i)$ ( or $(L,i)$), the distribution of the triangulation in the finite component is called free triangulation of the $(i+1)$-gon with parameter $\alpha\beta$. Details about free triangulations along with computations of some estimates on them is done in \cref{sec:free_triangulation}.

 \begin{figure}[t]

\centering{\includegraphics[scale=0.75]{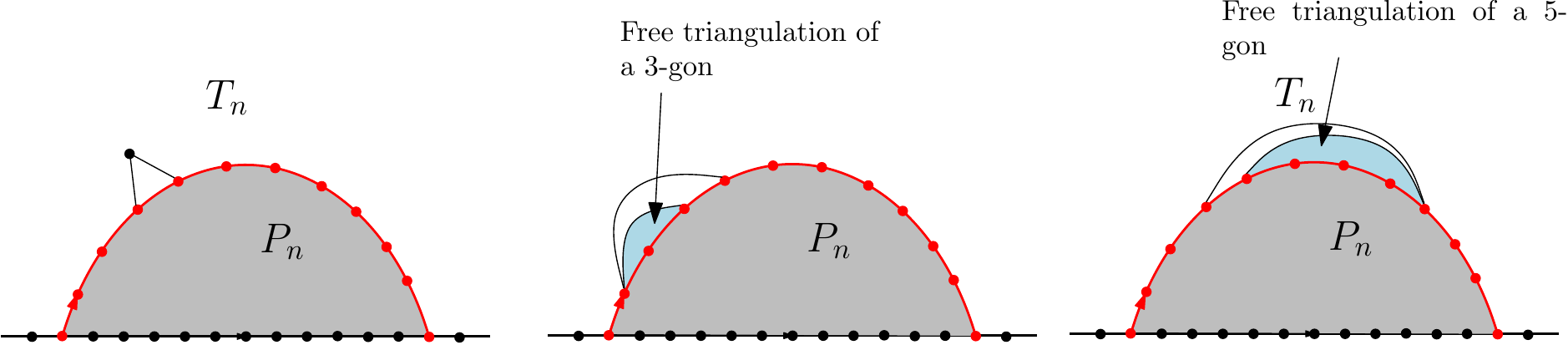} }
\caption{An illustration of possible peeling steps after $n$ steps of peeling have been completed. The gray area denote the peeled part $P_n$. The red vertices and edges denote the internal boundary of $P_n$. In the middle (resp. right), the blue regions correspond to a triangulation distributed as a free triangulation of a triangle (resp. pentagon). \label{fig:peeling_algo}}
\end{figure}

Intuitively, since $p_i$ has a heavy tail ($p_i \sim i^{-3/2}$) in the subcritical regime, steps of the form $(R,i)$ or $(L,i)$ occurs for large $i$ more frequently in the subcritical regime when performing the peeling process. This eats up the boundary a lot more resulting in the tightness of the peeling boundary of $P_n$. In contrast, the supercritical regime has exponential tail for $p_i$ ($p_i \sim \exp(-ci) , c>0$) which results in an exponentially growing boundary.


\subsection{Enumeration of planar maps}\label{sec:counting_formulas}
A \textbf{triangulation of an $m$-gon} is a finite map in which all faces are triangles except an external face of degree $m$. The boundary of the map must form a simple loop of $m$ edges. The root is on the boundary oriented such that the external face is to its right. The following combinatorial result may be found in
\cite{GFBook}. It is derived using the techniques introduced by Tutte
\cite{Tutte}.

\begin{prop}\label{prop:count}
  For $n,m\geq 0$, not both 0, the number of rooted triangulations
  of a disc with $m+2$ boundary vertices and $n$ internal vertices without self loops is
  \[
  \phi_{n,m+2} = \frac {2^{n+1} (2m+1)! (2m+3n)!} {m!^2 n! (2m+2n+2)! } .
  \]
\end{prop}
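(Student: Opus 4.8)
The plan is to prove the formula by Tutte's classical generating-function method, via a root-edge (peeling) decomposition, as in \cite{Tutte,GFBook}. Encode the numbers $\phi_{n,m+2}$ in the bivariate series
\[
F(x,y)=\sum_{n\ge 0}\sum_{t\ge 2}\phi_{n,t}\,x^{n}y^{t},
\]
where $x$ marks internal vertices and $y$ marks the number of boundary edges, with the usual convention $\phi_{0,2}=1$ for the degenerate $2$-gon. First I would set up the recursion by looking at the triangle incident to the root edge: its apex is either an internal vertex --- after deleting that triangle one is left with a rooted triangulation of a polygon with one more boundary edge and one fewer internal vertex --- or a boundary vertex, in which case the triangle cuts the map into two rooted triangulations of smaller polygons glued at a single vertex, which may be chosen independently. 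Translating this into generating functions and using $\phi_{n,2}=\phi_{n-1,3}$ for $n\ge 1$ to eliminate the $2$-gon series, a short computation yields the quadratic functional equation with one catalytic variable
\[
F^{2}+(x-y)F+y^{3}-xy^{2}f_{2}(x)=0,\qquad f_{2}(x):=[y^{2}]F(x,y).
\]

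The second step is to solve this equation by the quadratic method of Tutte and Brown (in the modern catalytic-variable formulation of Bousquet-M\'elou and Jehanne). Writing $P(u,y)=u^{2}+(x-y)u+y^{3}-xy^{2}f_{2}$, one seeks a power series $Y=Y(x)$ at which $P$ and $\partial_{u}P$ vanish simultaneously at $u=F(x,Y)$; differentiating the functional equation in $y$ and specializing at $y=Y$ supplies the extra relation $\partial_{y}P=0$ there. Eliminating $F(x,Y)$ and $f_{2}(x)$ from the three relations leaves a single algebraic equation for the branch point, which simplifies to $Y^{3}=\tfrac12\,x\,(Y-x)$; equivalently $U:=Y/x$ is the unique power-series solution of $U=1+2xU^{3}$, and one then finds $f_{2}(x)=\tfrac12 U(3-U)$. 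Back-substituting gives $F$ in closed form; explicitly $F(x,y)=\tfrac12(y-x)-(y-xU)\sqrt{\tfrac{1}{4U^{2}}-y}$.

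Finally, with $F$ in this parametric-algebraic form I would extract $\phi_{n,m+2}=[x^{n}y^{m+2}]F$: expanding the square root in powers of $y$ gives each $y$-coefficient as an explicit polynomial in $U$, and extracting the coefficient of $x^{n}$ from powers of $U$ by Lagrange--B\"urmann inversion applied to $U=1+2xU^{3}$ turns it into a sum of binomial coefficients; carrying out the resulting (Vandermonde-type) convolution in closed form reproduces the ratio of factorials in the statement. A routine but less enlightening alternative, which sidesteps solving the functional equation, is to take the claimed closed form as an ansatz and verify directly that it obeys the root-edge recursion; this reduces to a single hypergeometric summation identity, provable by Zeilberger's algorithm. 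As a consistency check one recovers $\phi_{0,m+2}=C_{m}$ (the Catalan numbers) on the internal-vertex-free slice, and small cases such as $\phi_{1,3}=4$ and $\phi_{1,4}=15$.

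I expect the main obstacle to be the second step: getting the functional equation exactly right --- in particular the polygon-splitting term with its index shifts, and the degenerate $2$-gon convention --- and then carrying the elimination in the quadratic method through to the clean parametrization $U=1+2xU^{3}$. Once that is in hand, the Lagrange-inversion bookkeeping in the last step, though somewhat tedious, is mechanical.
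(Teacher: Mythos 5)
Your outline is correct and is essentially the paper's route: the paper offers no proof of its own, citing Goulden--Jackson and Tutte, and your root-edge decomposition, the functional equation $F^{2}+(x-y)F+y^{3}-xy^{2}f_{2}(x)=0$, and the quadratic-method parametrization $U=1+2xU^{3}$ with $f_{2}=\tfrac12U(3-U)$ (which check against $\phi_{0,m+2}=C_m$, $\phi_{1,3}=4$, $\phi_{2,3}=24$) are exactly that classical derivation, with only the routine Lagrange-inversion bookkeeping left to carry out.
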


We will assume $\phi_{0,2}=1$ (see discussion in \cite{AR13}). The following asymptotics of $\phi_{n,m}$ may be found in the proof of Lemma 4.1 in \cite{AR13}. As $\{m,n\} \to \{\infty,\infty\}$,
\begin{equation}
\phi_{n,m}\sim  c \left(\frac{27}{2} \right)^n 
      9^m n^{-5/2} \sqrt{m} \left(1+\frac{2m}{3n}\right)^{2m+3n}
    \left(1+\frac{m}{n}\right)^{-2m-2n} \label{eq:asymp}
\end{equation}
for some constant $c>0$.


\subsection{Free triangulations }\label{sec:free_triangulation}
The following
measure is of particular interest:

\begin{defn}\label{def:free}
  The \textbf{Boltzmann} or \textbf{free} distribution on rooted triangulations
  of an $m$-gon with parameter $q\leq\frac{2}{27}$ is the probability
  measure that assigns weight $q^n / Z_m(q)$ to each rooted
  triangulation of the $m$-gon having $n$ internal vertices, where
  \[
  Z_m(q) = \sum_n \phi_{n,m} q^n .
  \]
\end{defn}

A freely distributed triangulation with parameter $q$ of an $m$-gon
will be referred to as a \textbf{free triangulation} with parameter $q$ of
an $m$-gon. 
Note that by the asymptotics of $\phi$ as $n \to \infty$ we see that the
sum defining $Z_m(q)$ converges for any $q \leq \frac{2}{27}$ and for no larger
$q$. The precise value of the partition function will be useful, and we
record it here:

\begin{prop}\label{prop:Z}
  If $q = \theta(1-2\theta)^2$ with $\theta \in [0,1/6]$, then
  \[
  Z_{m+2}(q) = ((1-6\theta) m +2-6\theta) \frac{(2m)!}{m!(m+2)!} 
  (1-2\theta)^{-(2m+2)}.
  \]
\end{prop}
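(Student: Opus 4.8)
The plan is to prove the generating function identity
\[
Z_{m+2}(q) = \sum_n \phi_{n,m+2}\, q^n = ((1-6\theta)m + 2 - 6\theta)\,\frac{(2m)!}{m!(m+2)!}\,(1-2\theta)^{-(2m+2)}
\]
with $q = \theta(1-2\theta)^2$, by treating the right-hand side as a coefficient extraction problem. First I would recall the explicit formula $\phi_{n,m+2} = \frac{2^{n+1}(2m+1)!(2m+3n)!}{m!^2\,n!\,(2m+2n+2)!}$ from \cref{prop:count} (with the $m$ of that proposition replaced by the present $m$, so the disc has $m+2$ boundary vertices). The cleanest route is the one from Tutte's enumeration / the work in \cite{GFBook}: introduce the generating function $W(x) = \sum_{m\ge 0} Z_{m+2}(q)\, x^m$ in the boundary-length variable, which is known to satisfy an algebraic (quadratic) equation coming from the peeling/root-edge decomposition of triangulations of a polygon; one then checks that the claimed closed form for $Z_{m+2}(q)$, summed against $x^m$, yields a function satisfying the same quadratic. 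Equivalently, and perhaps more in the spirit of the rest of the paper, one can verify the identity directly by a hypergeometric/Lagrange-inversion argument: set $\theta$ so that $q = \theta(1-2\theta)^2$, note that $\theta$ is exactly the ``small'' branch solving this cubic (which is why $\theta \in [0,1/6]$ corresponds to $q \in [0, 2/27]$), and use Lagrange inversion with the substitution $u = \theta$, $q = u(1-2u)^2$, to extract $[q^n]$ of the candidate right-hand side and match it term by term with $\phi_{n,m+2}$.

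Concretely, here is the Lagrange-inversion step I would carry out. Write $\psi(u) = u(1-2u)^2$ so $q = \psi(\theta)$. For a formal power series $H(q)$, Lagrange–Bürmann gives $[q^n] H(q) = \frac{1}{n}[u^{n-1}]\Big( H'(\psi(u))\,\psi'(u)\cdot \big(u/\psi(u)\big)^n \cdot \tfrac{\psi(u)}{u}\Big)$ — more usefully, for $H$ expressed as a function of $\theta$, $[q^n]\, G(\theta(q)) = \tfrac1n [u^{n-1}]\big(G'(u)\,(1-2u)^{-2n}\big)$. Applying this with
\[
G(u) = ((1-6u)m + 2 - 6u)\,\frac{(2m)!}{m!(m+2)!}\,(1-2u)^{-(2m+2)},
\]
I would differentiate, expand $(1-2u)^{-(2m+2n+2\pm\ldots)}$ by the binomial series, and collect the coefficient of $u^{n-1}$; the resulting binomial sum should telescope or simplify (using $(1-6u)m+2-6u = (m+1)(1-2u) - (2m+... )u$-type rearrangements, or splitting $G$ into two pieces whose Lagrange coefficients each give a single binomial term) to exactly $\phi_{n,m+2} = \frac{2^{n+1}(2m+1)!(2m+3n)!}{m!^2 n!(2m+2n+2)!}$. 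I would also separately check the boundary case $n=0$: the right-hand side at $\theta=0$ must equal $\phi_{0,m+2} = \phi_{0,2}=1$ only when $m=0$, and more generally $Z_{m+2}(0) = \phi_{0,m+2} = \frac{(2m+1)!}{m!(m+2)!}\cdot(\text{something})$ — a short direct check against \cref{prop:count} with $n=0$.

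The main obstacle I anticipate is the bookkeeping in the Lagrange-inversion computation: matching a one-parameter family of binomial sums (indexed by $m$) against the two-variable closed form for $\phi_{n,m+2}$ requires a clean choice of how to split $G$ and a correct handling of the derivative $G'(u)$, and it is easy to be off by shifts in $m$ or $n$ or by factors of $2$ coming from $\psi'(u) = (1-2u)(1-6u)$. To keep this manageable I would (i) first pin down the normalization by verifying the top-degree / leading behavior and the $n=0$, $n=1$ cases by hand, and (ii) use the alternative algebraic-equation verification as a cross-check: the function $W(x)=\sum_m Z_{m+2}(q)x^m$ should satisfy the same quadratic functional equation that the peeling recursion for triangulations of polygons produces (this equation appears in \cite{GFBook} and is implicit in the $\beta$, $p_i$ computations of \cite{AR13} quoted above), and substituting the candidate closed form and checking it solves that quadratic is a finite, if tedious, computation. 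Either route reduces the proposition to routine algebra once the correct substitution $q = \theta(1-2\theta)^2$ and branch choice $\theta\in[0,1/6]$ are fixed.
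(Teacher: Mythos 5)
Your proposal is correct, but it takes a genuinely different route from the paper: the paper gives no computation at all for \cref{prop:Z} and simply cites \cite{GFBook}, noting that the formula appears among the intermediate steps of the derivation of $\phi_{n,m}$ there after a suitable reparametrization. Your Lagrange-inversion plan makes the statement self-contained, and the ``telescoping'' you anticipate does in fact close cleanly: writing $G(u)=\frac{(2m)!}{m!(m+2)!}\bigl(m+2-6(m+1)u\bigr)(1-2u)^{-(2m+2)}$, one computes $G'(u)=2(m+1)(2m+1)\frac{(2m)!}{m!(m+2)!}(1-6u)(1-2u)^{-(2m+3)}$, and since $\theta=q(1-2\theta)^{-2}$ the Lagrange--B\"urmann formula you quote gives
\begin{equation}
[q^n]\,G(\theta(q))=\tfrac1n\,[u^{n-1}]\Bigl(G'(u)(1-2u)^{-2n}\Bigr)
=\tfrac1n\cdot 2(m+1)(2m+1)\tfrac{(2m)!}{m!(m+2)!}\cdot 2^{n-1}\tfrac{(2m+3n)!\,(2m+4)}{(n-1)!\,(2m+2n+2)!},\nonumber
\end{equation}
which simplifies exactly to $\phi_{n,m+2}=\frac{2^{n+1}(2m+1)!(2m+3n)!}{m!^2\,n!\,(2m+2n+2)!}$, while the $n=0$ term $G(0)=\frac{(2m)!}{m!(m+1)!}$ matches $\phi_{0,m+2}$ for every $m$ (so the $n=0$ check is not special to $m=0$, as your phrasing suggests). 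What the paper's citation buys is brevity and consistency with the source of \cref{prop:count}; what your route buys is an explicit, verifiable derivation and a transparent explanation of the branch choice $\theta\in[0,1/6]$ corresponding to $q\le 2/27$. Your alternative cross-check via the quadratic functional equation for $\sum_m Z_{m+2}(q)x^m$ is closer in spirit to how \cite{GFBook} actually arrives at these formulas, so either of your two routes is a legitimate replacement for the paper's citation-only proof.
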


The proof can be found as intermediate steps in the derivation of
$\phi_{n,m}$ in \cite{GFBook}. The above form may be deduced after a
suitable reparametrization of the form given there.
Let $I_m(q)$ denote the number of internal vertices of a freely distributed triangulation of an
  $m$-gon with parameter $q$. 

\begin{prop}\label{prop:free_expectation}
  Fix $\theta \in [0, 1/6)$ and let $q=\theta(1-2\theta)^2$. Fix an integer $m \ge 2$.
  \begin{mylist}
   \item $\E(I_m(q)) = \frac{(m-1)(2m-3)2\theta}{(1-6\theta)m + 6\theta} = \frac{4\theta}{(1-6\theta)}m + O(1)$
\item  $Var(I_m(q)) = \frac{(m-1)(2m-3)m(1-2\theta)}{((1-6\theta)m+6\theta)^2(1-6\theta)} = \frac{2(1-2\theta)}{(1-6\theta)^3}m + O(1)$
  \end{mylist}
\end{prop}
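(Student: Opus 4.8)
The plan is to derive both moment formulas directly from the explicit partition function in \cref{prop:Z}, using the standard generating-function identities
\[
\E(I_m(q)) = q\,\frac{d}{dq}\log Z_m(q) = \frac{q\,Z_m'(q)}{Z_m(q)}, \qquad
\operatorname{Var}(I_m(q)) = q\,\frac{d}{dq}\left(q\,\frac{Z_m'(q)}{Z_m(q)}\right),
\]
where differentiation is with respect to $q$. Since \cref{prop:Z} gives $Z_{m+2}$ as an explicit function of $\theta$ (with $q = \theta(1-2\theta)^2$), it will be convenient to change variables: by the chain rule $q\,\frac{d}{dq} = \frac{q}{dq/d\theta}\,\frac{d}{d\theta}$, and a short computation gives $\frac{dq}{d\theta} = (1-2\theta)(1-6\theta)$, so $q\,\frac{d}{dq} = \frac{\theta(1-2\theta)}{1-6\theta}\,\frac{d}{d\theta}$. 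Thus the whole problem reduces to differentiating $\log Z_{m+2}$ in $\theta$, which is elementary because $Z_{m+2}(q)$ factors as $((1-6\theta)m + 2 - 6\theta)$ times a $\theta$-independent combinatorial factor times $(1-2\theta)^{-(2m+2)}$.

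Carrying this out: $\log Z_{m+2} = \log\big((1-6\theta)m + 2-6\theta\big) - (2m+2)\log(1-2\theta) + C_m$, so
\[
\frac{d}{d\theta}\log Z_{m+2} = \frac{-6(m+1)}{(1-6\theta)m + 2-6\theta} + \frac{2(2m+2)}{1-2\theta}.
\]
Multiplying by $\frac{\theta(1-2\theta)}{1-6\theta}$ and simplifying (put everything over the common denominator $(1-6\theta)\big((1-6\theta)m+2-6\theta\big)$; the numerator collapses after cancellation) should yield exactly $\frac{(m-1)(2m-3)2\theta}{(1-6\theta)m+6\theta}$ — note $(1-6\theta)m + 2-6\theta = (1-6\theta)(m+1) + 1$, but one must be careful to match the stated denominator $(1-6\theta)m + 6\theta$, which comes from re-indexing $m \mapsto m+2$ in the final answer, i.e. \cref{prop:Z} is for the $(m+2)$-gon while \cref{prop:free_expectation} is phrased for the $m$-gon, so one substitutes $m \to m-2$ at the end. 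The asymptotic form $\frac{4\theta}{1-6\theta}m + O(1)$ then follows by expanding the rational function for large $m$.

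For the variance I would apply $q\,\frac{d}{dq} = \frac{\theta(1-2\theta)}{1-6\theta}\,\frac{d}{d\theta}$ once more to the expression just obtained for $\E(I_m(q))$, treating it as a function of $\theta$, and again simplify the resulting rational function. The main (though purely mechanical) obstacle is the bookkeeping in this second differentiation: one is differentiating a ratio of two polynomials in $\theta$ (each of degree depending on $m$) and then multiplying by another rational factor, so keeping track of the common denominator $\big((1-6\theta)m+6\theta\big)^2(1-6\theta)$ and verifying that the numerator simplifies to $(m-1)(2m-3)m(1-2\theta)$ requires care; a symbolic check at, say, $m=2,3,4$ is a sensible safeguard. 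Finally the $O(1)$ asymptotics for the variance come from dividing numerator and denominator by $m^3$ and reading off the leading coefficient $\frac{2(1-2\theta)}{(1-6\theta)^3}$. I do not anticipate any conceptual difficulty — the only real subtlety is the index shift between \cref{prop:Z} (stated for $m+2$) and the statement of \cref{prop:free_expectation} (stated for $m$), and making sure the edge cases $m=2$ (where $I_m$ may be forced to be small) are consistent with the formulas, which they are since the formula gives a finite positive value there.
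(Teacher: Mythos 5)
Your approach is exactly the paper's: it proves (i) by the same identity $\E(I_m(q)) = q(\log Z_m)'(q)$ combined with \cref{prop:Z}, and your handling of the change of variables (with $dq/d\theta = (1-2\theta)(1-6\theta)$, hence $q\,\tfrac{d}{dq} = \tfrac{\theta(1-2\theta)}{1-6\theta}\tfrac{d}{d\theta}$) and of the index shift between the $(m+2)$-gon in \cref{prop:Z} and the $m$-gon in the statement is correct. For (i) the computation does close: one gets $\E(I_{m+2}(q)) = \tfrac{2\theta(m+1)(2m+1)}{(1-6\theta)m+2-6\theta}$, which is the stated formula after $m\mapsto m-2$. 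Your identity $\operatorname{Var}(I_m(q)) = q\tfrac{d}{dq}\bigl(qZ_m'(q)/Z_m(q)\bigr)$ is also the right tool for (ii), matching the paper's ``similar computation''.

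The gap is that for (ii) you defer the computation and assert it will yield the numerator $(m-1)(2m-3)m(1-2\theta)$; that assertion is the entire content of (ii), and carrying it out does not give the printed expression. Writing $D(\theta)=(1-6\theta)m+6\theta$, one has $D-\theta D'=m$, so $\tfrac{d}{d\theta}\tfrac{2\theta(m-1)(2m-3)}{D}=\tfrac{2m(m-1)(2m-3)}{D^2}$, and therefore
\begin{equation}
\operatorname{Var}(I_m(q)) \;=\; \frac{2\theta(1-2\theta)\,m(m-1)(2m-3)}{(1-6\theta)\bigl((1-6\theta)m+6\theta\bigr)^2} \;=\; \frac{4\theta(1-2\theta)}{(1-6\theta)^3}\,m + O(1),\nonumber
\end{equation}
which differs from the displayed statement (and its leading constant) by a factor $2\theta$. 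The small-$m$ check you wisely propose would have caught this: at $m=2$ and $q\to 0$ the triangulation is deterministic, so the variance must vanish, whereas the stated formula tends to $1/2$; moreover $Z_2(q)=1+q+4q^2+24q^3+\cdots$ gives $\operatorname{Var}(I_2(q))=q+14q^2+O(q^3)$, which agrees with the corrected expression (using $\theta=q+4q^2+O(q^3)$) and not with the printed one. So you should either carry out the differentiation and state the corrected formula (the statement's numerator is evidently missing a factor $2\theta$), or at least not claim the mechanical step verifies the formula as printed. Note the discrepancy only changes the constant in the linear-in-$m$ growth, so the way the proposition is used later (the Chebyshev bounds in the proof of \cref{lem:tail}) is unaffected.
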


\begin{proof}
Note the following identity
\begin{equation}
 \E(I_{m}(q)) = \frac{q(Z_m'(q))}{Z_{m}(q)} = q (\log Z_m)'(q)\nonumber
\end{equation}
Putting $q = \theta(1-2\theta)^2$ and using \cref{prop:Z}, we obtain after an easy computation
\begin{equation}
  \E(I_{m}(q)) = \frac{(m-1)(2m-3)2\theta}{(1-6\theta)m + 6\theta}\nonumber
\end{equation}
The proof of (ii) is a similar computation and is left to the  reader to verify.
\end{proof}
We will need the following estimates whose proof is postponed to \cref{sec:tailproof}.
\begin{lem}\label{lem:tail}
Suppose $\alpha\beta = \theta (1-2\theta)^2$ where $\alpha \in [0,2/3)  $ and $\beta$ is given by \eqref{eq:betasub}. Suppose $Y$ is a variable supported on $\N \cup \{0\}$ such that $\P(Y=i) =p_i$ and $\P(Y=0) =\alpha$ where $p_i$ are given by \eqref{eq: sub_pi}. Then  
\begin{mylist}
  \item $\P(Y+I_{Y+1} > x) \sim \frac{c_\alpha }{\sqrt{x}}$
\item $\E(Y+I_{Y+1})\mathbbm{1}_{\{Y+I_{Y+1}<x\}} \sim c_\alpha \sqrt{x}$
\end{mylist}                                                                                                                                                                                                                                                                                                                                                                                                                                                                                                                                                                                                as $x \to \infty$ where 
\begin{equation}
         c_\alpha = \frac{(1-3\alpha/2)\sqrt{1-2\theta}}{\sqrt{\pi(1-6\theta)}}\label{eq:const}
\end{equation}
\end{lem}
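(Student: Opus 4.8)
The plan is to view $Z := Y + I_{Y+1}$ as a small perturbation of $\lambda Y$, where $\lambda := \frac{1-2\theta}{1-6\theta}>1$. Conditionally on $Y=i$, \cref{prop:free_expectation}(i) gives $\E[I_{i+1}] = \frac{4\theta}{1-6\theta}\,i + O(1)$, hence $\E[i+I_{i+1}] = \lambda i + O(1)$, while \cref{prop:free_expectation}(ii) gives $\operatorname{Var}(I_{i+1}) = O(i)$, with both $O(1)$ and $O(i)$ bounds uniform in $i\ge 1$. On the other side, \eqref{eq:constant1} gives $p_i \sim \frac{1-3\alpha/2}{2\sqrt\pi}\,i^{-3/2}$, so that $\P(Y>n) = \sum_{j>n}p_j \sim \frac{1-3\alpha/2}{\sqrt\pi}\,n^{-1/2}$; thus $Y$ has a regularly varying tail of index $-1/2$. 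The guiding heuristic is $\P(Z>x)\approx \P(Y>x/\lambda)\sim \frac{1-3\alpha/2}{\sqrt\pi}\sqrt\lambda\,x^{-1/2}$, and one checks directly that $\frac{1-3\alpha/2}{\sqrt\pi}\sqrt\lambda = c_\alpha$. We may assume $\alpha\in(0,2/3)$: for $\alpha=0$ we have $q=\alpha\beta=0$, so $I_m(0)\equiv 0$, $Z=Y$, and (i)--(ii) are immediate; for $\alpha\in(0,2/3)$ we have $\theta\in(0,1/6)$ and $\lambda>1$, so small $\delta\in(0,\lambda-1)$ may be fixed throughout.

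For the upper bound in (i), write
\[
\P(Z>x)\ \le\ \P\!\left(Y>\tfrac{(1-\delta)x}{\lambda}\right)\ +\ \sum_{1\le i\le (1-\delta)x/\lambda} p_i\,\P(I_{i+1}>x-i).
\]
On the range $i\le (1-\delta)x/\lambda$ we have $x-i-\E[I_{i+1}] = x-\lambda i + O(1)\ge \delta x - O(1)$ and $\operatorname{Var}(I_{i+1}) = O(i) = O(x)$, so Chebyshev's inequality bounds $\P(I_{i+1}>x-i)$ by $O(\delta^{-2}x^{-1})$ uniformly in $i$; multiplying by $\sum_i p_i\le 1$, the sum is $O(\delta^{-2}x^{-1}) = o(x^{-1/2})$. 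Since $\P(Y>(1-\delta)x/\lambda)\sim \frac{1-3\alpha/2}{\sqrt\pi}\sqrt\lambda\,(1-\delta)^{-1/2}x^{-1/2}$, letting $x\to\infty$ and then $\delta\to 0$ gives $\limsup_x \sqrt x\,\P(Z>x)\le c_\alpha$.

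For the matching lower bound,
\[
\P(Z>x)\ \ge\ \P\!\left(Y>\tfrac{(1+\delta)x}{\lambda}\right)\ -\ \sum_{i>(1+\delta)x/\lambda} p_i\,\P(I_{i+1}\le x-i).
\]
Here the summand vanishes as soon as $i\ge x$ (since $I_{i+1}\ge 0 > x-i$), and for $(1+\delta)x/\lambda<i<x$ we have $\E[I_{i+1}]-(x-i) = \lambda i - x + O(1)\ge \delta x - O(1)$ together with $\operatorname{Var}(I_{i+1}) = O(i) = O(x)$, so Chebyshev again bounds each term by $O(\delta^{-2}x^{-1})$ and the whole correction by $o(x^{-1/2})$. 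As $\P(Y>(1+\delta)x/\lambda)\sim \frac{1-3\alpha/2}{\sqrt\pi}\sqrt\lambda\,(1+\delta)^{-1/2}x^{-1/2}$, letting $x\to\infty$ then $\delta\to 0$ gives $\liminf_x \sqrt x\,\P(Z>x)\ge c_\alpha$, completing (i). Part (ii) is then the standard Karamata-type consequence: with $N=\lceil x\rceil$ and $Z$ valued in $\N\cup\{0\}$,
\[
\E\big[Z\,\mathbbm{1}_{\{Z<x\}}\big] = \sum_{t=0}^{N-1}\P(Z>t)\ -\ N\,\P(Z\ge N),
\]
and by (i), $\P(Z>t)\sim c_\alpha t^{-1/2}$, so $\sum_{t=1}^{N-1}\P(Z>t)\sim c_\alpha\sum_{t=1}^{N-1}t^{-1/2}\sim 2c_\alpha\sqrt N$ and $N\,\P(Z\ge N)\sim c_\alpha\sqrt N$, whence $\E[Z\,\mathbbm{1}_{\{Z<x\}}]\sim 2c_\alpha\sqrt x - c_\alpha\sqrt x = c_\alpha\sqrt x$.

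The substantive part is (i). The feature that makes it clean is that in both directions the relevant summation range keeps $i=O(x)$, so $\operatorname{Var}(I_{i+1})=O(x)$ stays bounded against the $(\delta x)^2$ gap and a single application of Chebyshev — using only the first two moments of \cref{prop:free_expectation} and the tail asymptotics \eqref{eq:constant1} — suffices; no sharper concentration estimate for free triangulations is needed. The only points demanding care are the uniform bookkeeping of the $O(1)$ and $O(i)$ error terms and the (harmless) requirement $\delta<\lambda-1$, which is where $\theta>0$, i.e. $\alpha>0$, enters.
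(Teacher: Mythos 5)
Your proof is correct and follows essentially the same route as the paper: condition on $Y$, split the sum around $x/(1+d_\theta)$ (your $\lambda$ equals the paper's $1+d_\theta$), control the off-range terms by Chebyshev using the mean and variance from \cref{prop:free_expectation}, and identify the main term with the tail of $Y$, with part (ii) obtained by the same summation-by-parts identity. The only (cosmetic) difference is that you use a fixed relative $\delta$-window and send $\delta\to 0$ after $x\to\infty$, whereas the paper uses a window of width $x^{3/4}$ to get the asymptotics in one pass.
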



\subsection{Stable Random Variables}
The theory of stable random variables plays a vital role in our subsequent analysis. Fix $\alpha \in (0,2]$. An independent sequence $X_1,X_2,\ldots$ is said to follow a stable distribution of type $\alpha$ if $S_n = X_1+\ldots +X_n$ satisfies
\[
 S_n \stackrel{(d)}= n^{1/\alpha}X_n +\gamma_n
\]
for some sequence $\gamma_n$ and the distribution of $X_1$ is not concentrated around $0$. See for example \cite{fellerII} Chapter VI or \cite{Durbook} for more details.

We shall be needing the following classical result. This can be found in \cite{Durbook}.
\begin{thm}\label{thm: stable}
 Suppose $X_1,X_2, \ldots$ are i.i.d. with a distribution that satisfies
\begin{enumerate}
 \item $\lim_{x \rightarrow \infty} \P(X_1>x)/ \P(|X_1| > x) = \theta \in [0,1]$
 \item $\P(|X_1| > x) = x^{-\alpha}L(x)$
\end{enumerate}
where $\alpha <2$ and $L$ is slowly varying. Let $S_n = X_1+\ldots X_n$. $a_n = \inf\{x: \P(|X_1| > x) \le n^{-1}\}$ and $b_n = n\E(X_1 1_{|X_1| \le a_n})$. As $n \rightarrow \infty$ $(S_n - b_n)/a_n \rightarrow Y$ in distribution where $Y$ is a stable random variable of type $\alpha$.
\end{thm}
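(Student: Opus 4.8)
The plan is to prove this via characteristic functions and L\'evy's continuity theorem, which is the standard route to the generalized central limit theorem (the complete argument may be found in \cite{Durbook} or \cite{fellerII}). Write $\phi(t) = \E(e^{itX_1})$ for the common characteristic function. Since the summands are i.i.d., the characteristic function of $(S_n - b_n)/a_n$ is
\[
\hat\mu_n(t) = e^{-itb_n/a_n}\,\phi(t/a_n)^n,
\]
so by L\'evy's theorem it suffices to show that $\hat\mu_n(t)$ converges pointwise, as $n\to\infty$, to some $e^{\psi(t)}$ that is continuous at $0$, and then to recognize $e^{\psi}$ as the characteristic function of a stable law of type $\alpha$.

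First I would record the consequences of the hypotheses: (1) and (2) give $\P(X_1 > x) \sim \theta\, x^{-\alpha}L(x)$ and $\P(-X_1 > x) \sim (1-\theta)\,x^{-\alpha}L(x)$, while the definition of $a_n$ yields $n\,\P(|X_1| > a_n)\to 1$ together with the fact that $n\mapsto a_n$ is regularly varying of index $1/\alpha$, so that $a_{\lfloor cn\rfloor}/a_n \to c^{1/\alpha}$ for every $c>0$. The core step is then the local expansion of $\phi$ at the origin: writing $\phi(t/a_n) - 1 = \E(e^{itX_1/a_n} - 1)$, one splits the expectation according to whether $|X_1|$ lies above or below a cutoff of order $a_n$, uses Fubini and integration by parts to write each piece as an integral against the tail functions $\P(X_1 > x)$ and $\P(-X_1 > x)$, and applies Karamata's theorem for integrals of regularly varying functions. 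This produces
\[
n\bigl(\phi(t/a_n) - 1\bigr) - \frac{itb_n}{a_n} \longrightarrow \psi(t),
\]
where, for $\alpha \ne 1$, $\psi(t) = -\kappa|t|^\alpha\bigl(1 - i(2\theta-1)\operatorname{sgn}(t)\tan(\tfrac{\pi\alpha}{2})\bigr)$ with $\kappa = \kappa(\alpha,\theta) > 0$, and with the familiar $|t|\log|t|$ modification when $\alpha = 1$. The truncated centering $b_n = n\,\E(X_1\mathbbm{1}_{|X_1|\le a_n})$ is precisely what cancels the divergent linear term when $\alpha \ge 1$; for $\alpha < 1$ it is negligible and may be dropped.

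To finish, note that $\phi(t/a_n)\to 1$ and $|\phi(t/a_n)-1| = O(1/n)$, so $n|\phi(t/a_n)-1|^2 \to 0$ and hence $n\log\phi(t/a_n) = n(\phi(t/a_n)-1) + o(1)$; combined with the displayed limit this gives $\hat\mu_n(t)\to e^{\psi(t)}$, a function continuous at $0$. L\'evy's continuity theorem then yields $(S_n - b_n)/a_n \to Y$ in distribution, with $\E e^{itY} = e^{\psi(t)}$. Finally I would verify that $Y$ is stable of type $\alpha$ in the sense of the definition given above: homogeneity of $|t|^\alpha$ gives $n\psi(t) = \psi(n^{1/\alpha}t) + i\gamma_n t$ for a suitable real $\gamma_n$ (which vanishes when $\alpha\ne1$), and this translates exactly into $Y_1 + \dots + Y_n \stackrel{d}{=} n^{1/\alpha}Y + \gamma_n$ for i.i.d. copies $Y_i$ of $Y$.

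The main obstacle is the asymptotic analysis of $\phi$ near $0$ together with the bookkeeping of the centering: the three ranges $0<\alpha<1$, $\alpha=1$ and $1<\alpha<2$ must be handled separately, and the boundary case $\alpha=1$ — where the logarithmic correction appears and the interplay between $b_n$ and the cutoff is most delicate — is the one requiring the most care. The remaining ingredients (L\'evy continuity, the bound $n|\phi(t/a_n)-1|^2\to 0$, the regular variation of $a_n$, and the stability identity) are routine.
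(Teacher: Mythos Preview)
The paper does not give its own proof of this statement: it is quoted as a classical result and simply referenced to \cite{Durbook}. Your sketch via characteristic functions, Karamata-type tail asymptotics, and L\'evy's continuity theorem is exactly the standard argument found there (and in \cite{fellerII}), so there is nothing to compare --- your proposal is correct and matches the cited source.
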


We are specially interested in the case $\alpha=1/2$. It turns out that we can add a constant to a variable following a stable distribution of type $\alpha$ where $\alpha \neq1$ such that $\gamma_n = 0$ for all $n$ in its definition (see \cite{fellerII}). After such a centering, its density can be explicitly written as
\[
 (2\pi x^3)^{-1/2} \exp(-1/2x) \mathbbm1_{ \{x>0\}}
\]
This is known as the \textit{L\'{e}vy distribution}.



\section{Geometry}\label{sec:growth}
\begin{subsection}{Peeling algorithm}\label{sec:peel_algo}
Recall from the discussion in \cref{sec:peeling0} that we are free to choose the edge $e_n$ on which we apply the $n$th peeling step. We now describe an algorithmic procedure to choose the edges in such a way that at a certain (random) step we reveal the hull of the ball of radius $r$ around the root vertex. The algorithm follows the idea developed in \cite{UIPT2} for analyzing the volume growth of the full plane UIPT, but we modify it appropriately for the half plane versions. We take up the notations of \cref{sec:peeling0}. Further recall that the hull of the ball of radius $r$ of a map $M$ around the root is denoted by $B_r(M)$. 


 Suppose we perform the peeling procedure on a half planar triangulation $T$. 
Let $\tau_0=0$ and let $P_0$ be the root vertex. Suppose we have defined a (random) time $\tau_r$ so that $P_{\tau_r} = {B_r(T)}$ for some $r \ge 1$. In particular, the internal boundary of $P_{\tau_r}$ is $\partial {B_r(T)}$. The idea is to iteratively peel the edges in $\partial B_r(T)$ till none of the vertices in $\partial {B_r(T)}$ remain in the boundary of $T_n$.

\begin{description}
\item \textbf{Algorithm: }Suppose we have described the process up to step $n$ such that $\tau_r \le n < \tau_{r+1}$. Now look for the left most vertex $v$ of $\partial {B_r(T)}$ which remains in the internal boundary of $P_n$ at step $n$ and perform a peeling step on the edge to the right of $v$ in the boundary of $T_n$. If there is no vertex $v$ of $\partial B_r(T)$ left in the boundary of $T_{n+1}$, define $n+1 = \tau_{r+1}$ and $P_{\tau_{r+1}} = {B_{r+1}}$. 
\end{description}
The algorithm proceeds in such a way that for every vertex of $\partial B_r(T)$, we keep on peeling at an edge incident to that vertex until it goes inside the revealed map. Hence at step $\tau_{r}$, we reveal nothing but the hull of the ball of radius $r$ for every $r \ge 1$. Recall that the internal boundary of $P_n$ is the set of edges and vertices which are incident to at least one finite degree face not in $P_n$. Let $X_n$ denote the number of vertices in the internal boundary of $P_n$ at the $n$th step. It is easy to see that $X_n$ itself is not a Markov chain because the transition probabilities very much depend on the position of the edge on which we are peeling. However, a bit of thought reveals that $X_{\tau_r}$ is in fact an irreducible aperiodic Markov chain. We record the above observations in the following Proposition.

\begin{prop}\label{prop:algorithm}
For $r \ge 1$, $P_{\tau_r}$ described in the algorithm above is the same as $B_r(T)$ and $X_{\tau_r} = |\partial B_r(T)|$. Also, the sequence $\{X_{\tau_r}\}_{r \ge 1}$ is an irreducible aperiodic Markov chain. 
\end{prop}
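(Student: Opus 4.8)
The claim has three parts: (a) $P_{\tau_r}=B_r(T)$; (b) $X_{\tau_r}=|\partial B_r(T)|$; (c) the sequence $\{X_{\tau_r}\}_{r\ge 1}$ is an irreducible aperiodic Markov chain. Parts (a) and (b) are really a single statement, to be proved by induction on $r$. The plan is the following. Assume inductively that $P_{\tau_r}=B_r(T)$, so that its internal boundary equals $\partial B_r(T)$, a simple path $v_0,v_1,\dots$ in $T$. During the steps $\tau_r\le n<\tau_{r+1}$ the algorithm always peels the edge immediately to the right of the leftmost vertex of $\partial B_r(T)$ still exposed on $\partial T_n$. I would check two invariants maintained by this loop: first, every triangle added is incident to a vertex of $\partial B_r(T)$, so no vertex at distance $>r+1$ from the root ever enters $P_n$ during this phase, because the new ("third") vertex of a peeled triangle is a neighbour of a vertex of $\partial B_r(T)$ and hence is at distance $\le r+1$; second, the loop terminates, since each peeling step either removes a vertex of $\partial B_r(T)$ from the exposed boundary (an $\alpha$-step, or an $(L,i)/(R,i)$ step that swallows part of the boundary) or at worst can only happen finitely often before doing so — here one invokes that $T$ is locally finite and one-ended, so each $v_j$ has finite degree and is swallowed after finitely many peels. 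When the loop ends at step $\tau_{r+1}$, no vertex of $\partial B_r(T)$ is exposed, so $P_{\tau_{r+1}}$ contains exactly the faces incident to vertices at distance $\le r$, together with the finite components they cut off; one then argues this is precisely $B_{r+1}(T)$, i.e. the hull of the ball of radius $r+1$: every such face is within the hull, and conversely every face of $B_{r+1}(T)$ is incident to a vertex at distance $<r+1$ hence $\le r$, so was revealed. This is the step I expect to require the most care — matching the output of the exploration loop to the combinatorial definition of the hull, keeping track of the finite components correctly, and handling the degenerate case $r=0$ (where $P_0$ is the root vertex and $\partial B_0$ is $\{$root$\}$). Once (a) is established, (b) is immediate since $X_{\tau_r}$ counts the vertices on the internal boundary of $P_{\tau_r}=B_r(T)$, which is $\partial B_r(T)$.

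For part (c), the Markov property of $\{X_{\tau_r}\}$ follows from the domain Markov property together with the observation (stressed in \cref{sec:peeling0}) that at each peeling step the choice of edge $e_n$ is a measurable function of the revealed part $P_n$ only, so $T_n$ is independent of $P_n$ and distributed as $\H_\alpha$. Consequently, conditionally on $P_{\tau_r}=B_r(T)$ — and in particular on $X_{\tau_r}=|\partial B_r(T)|$ — the complement $T_{\tau_r}$ is a fresh sample of $\H_\alpha$, and the entire sub-process that reveals $B_{r+1}$ from $B_r$ (the loop above, run on $T_{\tau_r}$ re-rooted along its boundary) depends on the past only through the length of the exposed boundary $\partial B_r(T)$, not through the finer structure of $B_r(T)$. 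Here one uses translation invariance to see that re-rooting $T_{\tau_r}$ does not change its law, so the transition law of $X_{\tau_r}\mapsto X_{\tau_{r+1}}$ depends only on the current value $X_{\tau_r}$; this gives the Markov property. (A subtlety worth a sentence: the number of peeling steps in the loop is random, but it is a stopping time for the peeling filtration restricted to $T_{\tau_r}$, so the strong Markov property of the peeling process applies.)

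For irreducibility and aperiodicity, it suffices to exhibit, for the chain started from any state $k\ge 1$, a positive-probability one-step transition to some fixed state and a positive-probability return, e.g. that from any $k$ there is positive probability to reach state $1$ in one step and positive probability to stay at $k$ or move to $k+1$. Concretely, during the loop from $B_r$ to $B_{r+1}$ with $|\partial B_r|=k$, consider the event that the very first peel at each exposed vertex is an $(R,i)$ or $(L,i)$ step swallowing a large piece of the boundary: with positive probability the whole of $\partial B_r$ gets consumed while leaving an exposed boundary of any prescribed small length, in particular length $1$, giving $X_{\tau_{r+1}}=1$ with positive probability from every $k$. Similarly, $\alpha$-steps at every exposed vertex (each of probability $\alpha>0$, and the loop uses finitely many peels a.s.) produce a new boundary whose length can take a range of values with positive probability, covering both $k$ and $k+1$; this yields aperiodicity (period $1$, as the chain can return to a state in two different numbers of steps, or directly by a positive-probability self-loop). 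The main obstacle in (c) is bookkeeping: ensuring the loop length is a.s. finite (needed so that "finitely many peels" arguments are legitimate) and being careful that the transition probabilities genuinely depend on $X_{\tau_r}$ alone — both of which reduce, as above, to the domain Markov property plus local finiteness and one-endedness of $T$.
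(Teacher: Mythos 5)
The paper records this proposition without a formal proof: it is stated as a summary of the two informal observations preceding it (the algorithm keeps peeling at each vertex of $\partial B_r(T)$ until it is swallowed, so $P_{\tau_r}$ is the hull; and the transition from $X_{\tau_r}$ to $X_{\tau_{r+1}}$ only sees the length of the exposed segment, by the domain Markov property and translation invariance). Your proposal follows exactly this route and fills in details, and the part (c) skeleton (fresh $\H_\alpha$ complement, transition law depending only on the marked segment length, strong Markov at the stopping time $\tau_{r+1}$) is the right argument.

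However, two of the steps you actually wrote down are not correct as stated, and they sit precisely at the one non-trivial point, namely that the algorithm reveals the \emph{hull} and not the ball. First, the invariant ``no vertex at distance $>r+1$ ever enters $P_n$ during this phase'' is false: a step of the form $(R,i)$ or $(L,i)$ with large $i$ can reach past the end of $\partial B_r(T)$ along the original boundary and swallow a finite region whose interior vertices are arbitrarily far from the root. Second, in the converse inclusion you assert that ``every face of $B_{r+1}(T)$ is incident to a vertex at distance $<r+1$'', which fails exactly for the faces of the hull lying in the finite components of the complement of the ball. The correct bookkeeping (which you flag but do not supply) is: (i) any swallowed region lies inside $B_{r+1}(T)$ because a chain of faces avoiding the ball of radius $r+1$ cannot cross the enclosing cycle, all of whose non-external edges have an endpoint at distance $\le r$, so such regions sit in finite components of the ball's complement; and (ii) conversely, once every vertex at distance $\le r$ has left the exposed boundary, the ball of radius $r+1$ lies in $P_{\tau_{r+1}}$, and since the unexplored part $T_{\tau_{r+1}}$ is connected and infinite it lies in the infinite component of the ball's complement, so all finite components are also in $P_{\tau_{r+1}}$. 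Finally, two small slips in (c): the chain lives on $\{2,3,\dots\}$ (the paper later uses state space $\N\setminus\{0,1\}$), so state $1$ is not reachable and irreducibility should be argued towards state $2$ (e.g.\ two large swallowing steps as in the proof of \cref{prop:cutset}); and $\alpha$-steps alone never remove a vertex of $\partial B_r(T)$ from the exposed boundary, so your aperiodicity event must include $(L,1)$-type steps to terminate the phase --- both are easily repaired but the events you describe do not, as written, produce the claimed transitions.
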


 Following the idea of \cite{UIPT2}, we estimate the size of the boundary by analyzing $X_n$ separately for $\alpha$ in subcritical and supercritical regimes. Observe that $\Delta X_n = X_{n+1}-X_n \le 1$ for any $n$. Note also that the tails of $\Delta X_n$ have different behaviour in the subcritical and supercritical regimes:
\begin{equation}
\H_{\alpha}(\Delta X_n <- i) \approx
\begin{cases}
i^{-1/2} & \alpha<2/3\\
\exp(-ci) & \alpha>2/3
\end{cases}
\end{equation}
for some constant $c>0$ and a large non negative integer $i<X_n$. Thus, $\Delta X_n$ conditioned on $X_n$ has negative expectation if $X_n$ is not too small in the subcritical regime. This tells us that $X_n$ has a drift towards $0$ as soon as it gets large which implies it should be a tight sequence. On the other hand, it will follow from the computation below (\cref{lem:expected_change}) that in the supercritical regime, $\Delta X_n$ conditioned on $X_n$ has positive expectation. This will imply that $X_n$ grows linearly. This constitutes the key point of difference between the two regimes which is made rigorous in the following \cref{sec:supercritical,sec:subcritical}.

\end{subsection}

\subsection{Supercritical}\label{sec:supercritical}
In this subsection, we prove \cref{thm:vol_h_triang} and hence we assume $\alpha>2/3$ throughout this subsection. Recall that we denote by $B_r$ the hull of the ball of radius $r$ of a map $T$ with law $\H_\alpha$. Further, we shall also borrow the notations from \cref{sec:peeling0,sec:peel_algo}.

 As mentioned before, we will perform the peeling algorithm described in \cref{sec:peel_algo} and analyze the quantity $\Delta X_n$. To that end, we shall approximate $\Delta X_n$ by a sequence of auxilary variables $\tilde{X_n}$ such that the variables $\Delta \tilde{X_n}= \tilde{X_{n+1}} - \tilde{X_n}$ for $n\ge 1$ form an i.i.d. sequence with $\Delta \tilde{X_n} = -i$ if a step of the form $(L,i)$ or $(R,i)$ occurs in the $(n+1)$th peeling step and $\Delta \tilde{X_n}=1$ if an $\alpha$ step occurs in the $(n+1)th$ peeling step. Clearly, from definition, $X_n>\tilde{X_n}$ since if in a peeling step the triangle revealed has the third vertex not on the internal boundary of $P_n$, $\Delta X_n > \Delta \tilde{X_n}$.

Because of the exponential tail, the variables $\Delta \tilde{X_n}$ in the supercritical regime have finite variance. Further its expectation turns out to be positive.

\begin{lem}\label{lem:expected_change}
\begin{equation}
 \E(\Delta(\tilde{X_n})) = \sqrt{3\alpha - 2}{\sqrt{\alpha}}\label{eq: expected_change}
\end{equation}
In particular,  $\E(\Delta(\tilde{X_n})) >0$ for $\alpha \in (2/3,1)$.
\end{lem}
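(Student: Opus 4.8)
The plan is to compute $\E(\Delta \tilde X_n)$ directly from the definition of the auxiliary walk, using the transition law spelled out in \cref{sec:peeling0}. Recall that $\Delta \tilde X_n = 1$ on an $\alpha$-step (probability $\alpha$) and $\Delta \tilde X_n = -i$ on a step of type $(L,i)$ or $(R,i)$ (total probability $p_i$ for each $i \ge 1$). Hence
\begin{equation}
 \E(\Delta \tilde X_n) = \alpha - \sum_{i\ge 1} i\, p_i, \nonumber
\end{equation}
and the whole lemma reduces to evaluating the series $\sum_{i\ge 1} i\,p_i$ with $p_i$ given by the supercritical formula \eqref{eq: super_pi}. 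Note this series converges precisely because $p_i$ has an exponential tail ($p_i \sim c(2/\alpha - 2)^i i^{-3/2}$ with $2/\alpha - 2 < 1$ for $\alpha > 2/3$), which is also the place one checks the finite-variance claim in the sentence preceding the lemma.

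The main computational step is therefore the identity
\begin{equation}
 \sum_{i\ge 1} \frac{2}{4^i}\frac{(2i-2)!}{(i-1)!(i+1)!}\left(\tfrac{2}{\alpha}-2\right)^{i}\big((3\alpha-2)i+1\big) = \alpha - \sqrt{\alpha(3\alpha-2)}. \nonumber
\end{equation}
I would handle this via generating functions for Catalan-type numbers. Writing $C_{i-1} = \frac{1}{i}\binom{2i-2}{i-1} = \frac{(2i-2)!}{(i-1)!\,i!}$, one has $\frac{(2i-2)!}{(i-1)!(i+1)!} = \frac{C_{i-1}}{i+1}$, so with $x = \tfrac12\left(\tfrac{2}{\alpha}-2\right) = \tfrac{1}{\alpha} - 1$ the sum becomes $2\sum_{i\ge1} \frac{C_{i-1}}{i+1} x^i \big((3\alpha-2)i+1\big)$. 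Using the standard series $\sum_{i\ge1} C_{i-1} x^{i} = \frac{1-\sqrt{1-4x}}{2}$ (the generating function of Catalan numbers, shifted) together with its term-by-term integral $\sum_{i\ge 1}\frac{C_{i-1}}{i+1}x^{i+1} = \int_0^x \frac{1-\sqrt{1-4t}}{2}\,dt$ and the derivative relation that produces the factor $i$, one reduces everything to elementary functions of $\sqrt{1-4x} = \sqrt{1 - 4(\tfrac1\alpha - 1)} = \sqrt{\tfrac{4}{\alpha}(\alpha - \tfrac34)}$; after simplification the square root $\sqrt{3\alpha - 2}/\sqrt{\alpha}$ emerges. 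Since these are known $\H_\alpha$-quantities, an alternative (and safer) route is to invoke the fact that $\sum_i p_i + \alpha = 1$ and a companion identity for $\sum_i i\,p_i$ already implicit in the computations of \cite{AR13}; I would cross-check the closed form against the $\alpha \downarrow 2/3$ limit, where the right-hand side of \eqref{eq: expected_change} vanishes, consistent with the criticality of the UIHPT, and against $\alpha \uparrow 1$, where it gives $1$, matching the fact that $p_i \to 0$ and only $\alpha$-steps survive.

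The positivity statement is then immediate: for $\alpha \in (2/3,1)$ we have $3\alpha - 2 > 0$ and $\alpha > 0$, so $\sqrt{\alpha(3\alpha-2)} > 0$, giving $\E(\Delta \tilde X_n) > 0$. I expect the only real obstacle to be the bookkeeping in the Catalan generating-function manipulation — keeping track of the index shifts between $C_{i-1}$, $\frac{C_{i-1}}{i+1}$, and the weight $(3\alpha-2)i + 1$, and correctly combining the three resulting elementary expressions so the algebra collapses to a single clean square root. None of it is deep; it is just a matter of organizing the sum as a linear combination of $\sum C_{i-1}x^i$, $\sum i C_{i-1} x^i$, $\sum \frac{C_{i-1}}{i+1}x^i$, and $\sum \frac{i C_{i-1}}{i+1} x^i$, each of which is a known closed form or a one-line integral/derivative away from one.
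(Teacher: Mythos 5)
Your overall route is the same as the paper's: write $\E(\Delta\tilde{X_n})=\alpha-\sum_{i\ge1} i\,p_i$, substitute the supercritical formula \eqref{eq: super_pi}, and evaluate the resulting Catalan-type series by generating functions (the paper likewise leaves that last manipulation to the reader). The one structural difference is that the paper first uses $\sum_i p_i = 1-\alpha$ to rewrite the quantity as $1-\sum_i(i+1)p_i$, so that the factor $(i+1)$ cancels the denominator in $\frac{(2i-2)!}{(i-1)!(i+1)!} = \frac{C_{i-1}}{i+1}$, leaving only the two standard series $\sum_i C_{i-1}x^i$ and $\sum_i iC_{i-1}x^i$; adopting this would spare you the integrated series $\sum_i\frac{C_{i-1}}{i+1}x^i$ and $\sum_i\frac{iC_{i-1}}{i+1}x^i$ that your plan requires.

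That said, the computational core of your sketch contains concrete slips that must be repaired before it closes. First, your displayed ``main identity'' is missing the factor $i$: as written its left-hand side is just $\sum_i p_i$, which equals $1-\alpha$ and not $\alpha-\sqrt{\alpha(3\alpha-2)}$; the summand should be $i\,p_i$ (your prose makes clear this is what you intend, and the right-hand side is indeed the correct value of $\sum_i i\,p_i$). Second, the substitution is off by a factor of $2$: to absorb the $4^{-i}$ you need $x=\frac14\left(\frac2\alpha-2\right)=\frac{1-\alpha}{2\alpha}$, not $\frac12\left(\frac2\alpha-2\right)$, and then $1-4x=3-\frac2\alpha=\frac{3\alpha-2}{\alpha}$; the value $\sqrt{1-4x}=\sqrt{\frac4\alpha\left(\alpha-\frac34\right)}$ you quote is incorrect under either choice of $x$ and would not produce the factor $\sqrt{3\alpha-2}/\sqrt{\alpha}$. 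With the corrected $x$ the computation does finish cleanly along the paper's lines:
\begin{equation}
 \sum_{i\ge1}(i+1)p_i \;=\; 2(3\alpha-2)\,\frac{x}{\sqrt{1-4x}} \;+\; \bigl(1-\sqrt{1-4x}\bigr) \;=\; 1-\sqrt{\alpha(3\alpha-2)},\nonumber
\end{equation}
whence $\E(\Delta\tilde{X_n})=\sqrt{\alpha(3\alpha-2)}$. Your limiting sanity checks at $\alpha\downarrow 2/3$ and $\alpha\uparrow 1$, and the positivity conclusion, are fine.
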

\begin{proof}
 Observe that the expected change is given by 
\[
 \alpha - \sum_{i\ge1}i p_i = 1- \sum_{i \ge 1}(i+1)p_i
\]
where $p_i$ is given by \cref{eq: super_pi} and the equality follows from the fact that $\sum_{i \ge 1}p_i = 1-\alpha$. Now from \cref{eq: super_pi},
\begin{equation}
 \sum_{i \ge 1}(i+1)p_i = \sum_{i \ge 1} 2 Cat(i-1)\left(\frac{2/\alpha-2}{4}\right)^i((3\alpha-2)i+1) \label{eq:ec1}
\end{equation}
 where $Cat(n) = \frac{1}{n+1}\dbinom{2n}{n}$ is the $n$th catalan number. The sum in the right hand side of \cref{eq:ec1} can be easily computed using generating functions of catalan numbers. We leave this last step to the reader.
\end{proof}

\begin{lem}\label{lem:a.s.boundary}
 There exists a constant $c>0$ such that almost surely 
\begin{equation}
 c<\liminf \frac{X_n}{n} \le \limsup \frac{X_n}{n} \le 1 \label{eq:bound_hyper}
\end{equation}
\end{lem}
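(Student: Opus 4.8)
The upper bound $\limsup X_n / n \le 1$ is immediate: we have $\Delta X_n = X_{n+1} - X_n \le 1$ for every $n$ (each peeling step adds at most one new vertex to the internal boundary), and $X_0$ is bounded, so $X_n \le X_0 + n$.

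For the lower bound the plan is to compare $X_n$ with the auxiliary process $\tilde X_n$ already introduced, whose increments $\Delta \tilde X_n$ for $n \ge 1$ are i.i.d. First I would note $X_n \ge \tilde X_n$ (stated in the text: whenever the peeled triangle has its third vertex off the current internal boundary, $\Delta X_n \ge \Delta \tilde X_n$, and otherwise the two increments agree in law step by step; more carefully, one couples the two processes so that $X_n - \tilde X_n$ is nondecreasing). Then it suffices to prove $\liminf \tilde X_n / n \ge c$ for some $c > 0$ almost surely. By Lemma~3.7, $\mu := \E(\Delta \tilde X_n) = \sqrt{3\alpha-2}\,\sqrt{\alpha} > 0$, and by the exponential tail of $p_i$ in the supercritical regime (the second bullet after \eqref{eq:constant1}) the increments have finite variance, in particular finite first moment. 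Hence the strong law of large numbers gives $\tilde X_n / n \to \mu$ almost surely, so $\liminf X_n/n \ge \liminf \tilde X_n / n = \mu > 0$ almost surely, and any $c \in (0,\mu)$ works.

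The one point that needs a little care — and is the main (mild) obstacle — is the coupling justifying $X_n \ge \tilde X_n$ pathwise, rather than merely in distribution. The cleanest way is to build both processes on the same peeling sequence: at each step the peeling reveals either an $\alpha$-step or a step of type $(L,i)$ or $(R,i)$, with the prescribed probabilities, independently of the past by the domain Markov property. Set $\Delta \tilde X_n = 1$ on an $\alpha$-step and $\Delta \tilde X_n = -i$ on an $(L,i)$ or $(R,i)$ step; this makes $\{\Delta \tilde X_n\}_{n\ge 1}$ i.i.d. as required. On the same step, $\Delta X_n = 1$ on an $\alpha$-step as well (the new internal vertex enters the boundary), while on an $(L,i)$ or $(R,i)$ step $\Delta X_n \ge -i$ since at most $i$ boundary vertices are swallowed and possibly fewer remain on the internal boundary of $P_{n+1}$; also a swallowed finite component never decreases the count further. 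Thus $\Delta X_n \ge \Delta \tilde X_n$ for all $n \ge 1$, and since $X_1 \ge \tilde X_1$ by the same comparison at the first step, $X_n \ge \tilde X_n$ for all $n$. Everything else is the routine application of the SLLN above.
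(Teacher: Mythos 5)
Your proof is correct and follows essentially the same route as the paper: the trivial upper bound from $\Delta X_n \le 1$, and the lower bound via the SLLN applied to $\tilde X_n$ (whose i.i.d.\ increments have positive mean $\sqrt{3\alpha-2}\sqrt{\alpha}$ by \cref{lem:expected_change}) together with the pathwise domination $X_n \ge \tilde X_n$. The coupling you spell out is exactly how $\tilde X_n$ is defined in the paper (on the same peeling steps), so the paper treats the domination as holding ``by definition''; your extra care there is fine but not a departure.
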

\begin{proof}
 $\limsup X_n/n \le 1 $ follows trivially because $\Delta X_n \le 1$. Since the steps in $\Delta(\tilde{X_i})$ are i.i.d. with finite mean, strong law of large numbers imply that $\tilde{X_n}/n \to \sqrt{3\alpha - 2}{\sqrt{\alpha}}>0$ almost surely as $n \to \infty$. The required lower bound now follows from the fact that $X_n > \tilde{X_n}$ by definition.
\end{proof}
Recall that step $\tau_r$ in the peeling algorithm marks the step when the hull of the ball of radius $r$ is revealed. We now state some estimates on $\tau_r$. The first part of the following \cref{lem:time_estimate} is essentially rephrasing Lemma 4.2 of \cite{UIPT2}. Further, we remark that \cref{lem:time_estimate} is valid for any $\alpha \in [0,1)$ and we shall use it again when dealing with the subcritical case in \cref{sec:subcritical}.

\begin{lem}\label{lem:time_estimate}
For any $r\ge 0$, 
\begin{mylist}
\item There exists some constants $A>1$ and $A' > 0$ such that for any integer $n \ge 1$, 
\begin{equation}
 \P(\Delta \tau_r >  An | |X_{\tau_r}| = n) < \exp(-A'n)
\end{equation}
\item For any integer $k\ge 1$ and integers $1 \le l \le l'$
$$\P(\Delta \tau_r > k | |X_{\tau_r}| = l) \le \P(\Delta \tau_r >  k | |X_{\tau_r}| = l').$$
\end{mylist}
\end{lem}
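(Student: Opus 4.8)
We need to prove two parts about $\Delta\tau_r = \tau_{r+1} - \tau_r$, the number of peeling steps needed to go from the hull of radius $r$ to the hull of radius $r+1$, given the size $|X_{\tau_r}| = n$ of the internal boundary.

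Part (i): exponential tail. Starting at step $\tau_r$ with $n$ vertices on $\partial B_r$, the algorithm peels at edges incident to these $n$ "active" vertices until all of them are swallowed into the revealed map. The key observation: at each peeling step, there's a chance of at least some constant $q > 0$ that the current active vertex gets removed from the boundary (e.g., an $(L,1)$ or $(R,1)$ step that swallows the active vertex). More carefully, I'd want to show that after each peeling step the number of remaining active vertices decreases by at least 1 with probability bounded below... actually that's not quite right since a step can create new active... no wait, the active vertices are fixed as $\partial B_r$; new vertices revealed are not active. So at each step we're trying to "finish off" the leftmost remaining active vertex $v$, and peeling the edge to its right either (a) leaves $v$ on the boundary, or (b) removes $v$. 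There is a uniformly positive probability — say at least $p_1 = \beta$-ish, or more simply the probability of an $(R,1)$ step — that $v$ is removed in one step. But also an $(L,i)$ or large step could remove several active vertices at once. So $\Delta\tau_r$ is stochastically dominated by a sum of $n$ i.i.d. geometric random variables (one per active vertex, each with success probability $\ge q$), hence $\Delta\tau_r$ given $|X_{\tau_r}|=n$ is dominated by $\mathrm{NegBin}(n, q)$, which has the desired exponential concentration: $\P(\Delta\tau_r > An \mid |X_{\tau_r}| = n) < e^{-A'n}$ for suitable $A > 1/q$ and $A' > 0$ by a standard Chernoff bound. I would cite Lemma 4.2 of \cite{UIPT2} for the structure of this argument, adapting the half-plane bookkeeping.

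Part (ii): monotonicity in the starting boundary length. The point is that with $l \le l'$ active vertices, finishing off $l'$ of them takes at least as long (stochastically) as finishing off $l$. I would construct a coupling: run the peeling algorithm for the configuration with $l'$ active vertices; the sub-process that handles the leftmost $l$ of those active vertices is distributed exactly as the $\Delta\tau_r$ process with $l$ active vertices (by the domain Markov property and translation invariance, the peeling steps are i.i.d. in form and the algorithm processes active vertices left to right). Since $\Delta\tau_r$ with starting length $l'$ is at least the number of steps to clear the first $l$ active vertices in that same run, we get $\Delta\tau_r^{(l)} \le \Delta\tau_r^{(l')}$ pathwise under the coupling, giving the stated stochastic inequality. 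One subtlety: a single peeling step near the boundary between the "first $l$" and "remaining $l'-l$" active vertices could conceivably swallow active vertices from both groups, but this only helps — it can only shorten the time to clear all of them, so it does not affect the direction of the inequality. I would phrase the coupling carefully to make sure the marginal of the "first $l$" sub-process really is the $l$-active-vertex process; this is where the domain Markov property is doing the real work.

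The main obstacle is making Part (i) rigorous: one must check that at each step, regardless of the current configuration of the unexplored region $T_n$ (which is always distributed as $\H_\alpha$), the leftmost remaining active vertex is removed with probability bounded below by a constant independent of $n$ and of $r$. This follows because the removal event contains, for instance, a fixed finite-type peeling event (like an $(R,1)$ step swallowing a $2$-gon, or an $(L,i)$ step with $i$ large enough to reach past $v$), each of which has probability bounded below by a positive constant depending only on $\alpha$. Once that uniform lower bound $q = q(\alpha) > 0$ is in hand, the stochastic domination by a negative binomial and the Chernoff bound are routine.
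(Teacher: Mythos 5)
Your argument is correct and essentially the same as the paper's: for (i) the paper likewise dominates $\Delta\tau_r$ by a sum of at most $n$ i.i.d.\ geometric waiting times (one per vertex of $\partial B_r$, waiting for a step of the form $(L,i)$, $i\ge 1$, to swallow it) and concludes with a large deviations bound, and for (ii) it uses the same coupling, namely two nested marked segments $S\subset S'$ with common leftmost (root) vertex, noting that whenever some vertex of $S$ is not yet swallowed neither is some vertex of $S'$. The only small slip is your suggestion that an $(R,1)$ step removes the active vertex $v$ --- since one peels the edge to the \emph{right} of $v$, it is the $(L,i)$ steps (any $i\ge 1$) that swallow $v$ --- but this does not affect the uniform lower bound $q(\alpha)>0$ or the rest of the argument.
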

\begin{proof}
The number of steps required for a vertex on $\partial B_r$ to go inside the revealed map is a geometric random variable (we wait till a step of the form $(L,i)$ occurs for some $i \ge 1$.) Thus $\Delta \tau_r$ is a sum of at most $n$ i.i.d. geometric variables. Thus part (i) follows from a suitable large deviations estimate.

An easy coupling argument can be used to prove part (ii). To see this, let us consider two marked contiguous segments $S$ with $l$ vertices and $S'$ with $l'$ vertices on the boundary with the left most vertex being the root vertex. We can now perform the peeling algorithm described in \cref{sec:peel_algo} until all the vertices in $S$ is inside the revealed map. Clearly, if at some step, some vertices of $S$ are still not swallowed by the revealed map, then some vertices of $S'$ are also not swallowed.
\end{proof}

\cref{lem:time_estimate} along with \cref{lem:a.s.boundary} shows that almost surely for some positive constants $a,a'$ and for all but finitely many $r$
\begin{equation}
 a'\tau_{r+1}<X_{\tau_{r+1}}<\Delta(\tau_r) <aX_{\tau_r} < a\tau_r\label{eq:boundary_bound}.
\end{equation}
For the first and last inequality in the above display, we used \cref{lem:a.s.boundary}, for the third inequality, we used \cref{lem:time_estimate} and for the second inequality we observe that the vertices of $\partial B_{r+1}$ are added only one at a time. This in turn shows that
\begin{lem}\label{lem: stop_time}
 There exists constants $1<c<C$ such that almost surely 
\begin{eqnarray}
\liminf c^{-r}\tau_r &=& \infty \nonumber\\
\limsup C^{-r}\tau_r &<& \infty \nonumber
\end{eqnarray}
\end{lem}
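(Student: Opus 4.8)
The plan is to combine the linear growth of $X_n$ from \cref{lem:a.s.boundary} with the large-deviation control on $\Delta\tau_r$ from \cref{lem:time_estimate} via the chain of inequalities \eqref{eq:boundary_bound}, which the excerpt has already essentially set up. Since $\tau_r = \sum_{s=0}^{r-1}\Delta\tau_s$, it suffices to understand how $\Delta\tau_r$ compares with $\tau_r$ and with $X_{\tau_r}$. First I would establish \eqref{eq:boundary_bound} rigorously: for all but finitely many $r$, using \cref{lem:a.s.boundary} one has $c\,\tau_r < X_{\tau_r} \le \tau_r$ (the lower bound because $X_{\tau_r}/\tau_r \to$ some positive limit along the subsequence $\{\tau_r\}$, which inherits a.s.\ convergence from $X_n/n$; the upper bound because $\Delta X_n \le 1$). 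Then, applying \cref{lem:time_estimate}(i) with $n = X_{\tau_r}$, together with a Borel--Cantelli argument over $r$ (the probabilities $\exp(-A' X_{\tau_r})$ are summable since $X_{\tau_r}$ grows linearly), gives $\Delta\tau_r < A\,X_{\tau_r}$ eventually almost surely. Finally $X_{\tau_{r+1}} \le X_{\tau_r} + \Delta\tau_r$, since between steps $\tau_r$ and $\tau_{r+1}$ the boundary size increases by at most one per peeling step; combined with the previous bounds this yields $X_{\tau_{r+1}} \le (1+A)\,X_{\tau_r}$ eventually, so $X_{\tau_r}$ grows at most geometrically.

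From here the two conclusions follow. For the $\limsup$ bound: the recursion $X_{\tau_{r+1}} \le (1+A) X_{\tau_r}$ (valid for $r \ge r_0$) gives $X_{\tau_r} \le C_0 (1+A)^r$; and since $\tau_{r+1} - \tau_r = \Delta\tau_r < A X_{\tau_r} \le A C_0 (1+A)^r$, summing a geometric series gives $\tau_r \le C_1 (1+A)^r$ for a suitable constant, i.e.\ $\limsup C^{-r}\tau_r < \infty$ with $C = 1+A$ (note $1+A>1$). For the $\liminf$ bound: $\tau_{r+1} \ge \tau_r + \Delta\tau_r \ge \tau_r$, but more usefully, since $X_{\tau_r} > c\,\tau_r$ and $X_{\tau_{r+1}} \le X_{\tau_r} + \Delta\tau_r$, one needs a lower bound on $\Delta\tau_r$. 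Here I would argue that $\Delta\tau_r$ is a sum of $X_{\tau_r}$ i.i.d.\ geometric variables each with mean $>1$, so by the strong law (or a one-sided large-deviation bound plus Borel--Cantelli, again using that $X_{\tau_r}\to\infty$) one has $\Delta\tau_r \ge \delta X_{\tau_r}$ eventually a.s.\ for some $\delta>0$. Then $\tau_{r+1} = \tau_r + \Delta\tau_r \ge \tau_r + \delta X_{\tau_r} \ge \tau_r(1 + \delta c)$ eventually, giving $\tau_r \ge c_0(1+\delta c)^r$, i.e.\ $\liminf c^{-r}\tau_r = \infty$ with $c = 1 + \delta c_{\mathrm{old}} > 1$.

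The main obstacle I anticipate is making the Borel--Cantelli steps clean: the large-deviation estimates in \cref{lem:time_estimate}(i) are conditional on $|X_{\tau_r}| = n$, and $X_{\tau_r}$ is itself random, so one must be careful that $\{X_{\tau_r}\}$ being an irreducible aperiodic Markov chain (\cref{prop:algorithm}) with a.s.\ linear-in-$r$ growth (from \cref{lem:a.s.boundary}, via the $X_n \to \infty$ and $X_n/n \to$ positive limit transfer to the subsequence) interacts correctly with the conditional tail bounds. Concretely I would condition on the $\sigma$-algebra generated by the peeling up to time $\tau_r$, so that $X_{\tau_r}$ is measurable and the conditional bound $\P(\Delta\tau_r > A X_{\tau_r} \mid \mathcal F_{\tau_r}) < \exp(-A' X_{\tau_r})$ applies pointwise; then on the a.s.\ event that $X_{\tau_r} \ge \varepsilon r$ for all large $r$, the summed probabilities $\sum_r \exp(-A'\varepsilon r) < \infty$, and Borel--Cantelli finishes it. The monotonicity statement \cref{lem:time_estimate}(ii) is what lets one replace the genuinely random conditioning value by the deterministic lower bound $\varepsilon r$ inside these estimates. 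The remaining work is the elementary geometric-series bookkeeping sketched above.
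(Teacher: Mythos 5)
Your upper bound is sound and follows the paper's route: the conditional estimate of \cref{lem:time_estimate}(i), made summable by the linear growth of $X_{\tau_r}$ from \cref{lem:a.s.boundary} and combined with $X_{\tau_r}\le \tau_r$, gives $\Delta\tau_r\le A X_{\tau_r}\le A\tau_r$ eventually, hence $\tau_{r+1}\le (1+A)\tau_r$ and $\limsup C^{-r}\tau_r<\infty$; your handling of the random conditioning via $\mathcal F_{\tau_r}$ and the monotonicity in \cref{lem:time_estimate}(ii) is more careful than the paper's one-line appeal to \eqref{eq:boundary_bound}, and that part is fine.

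The gap is in your lower bound. You claim $\Delta\tau_r\ge\delta X_{\tau_r}$ eventually because ``$\Delta\tau_r$ is a sum of $X_{\tau_r}$ i.i.d.\ geometric variables each with mean $>1$.'' That representation is not correct in the direction you need: the paper's own proof of \cref{lem:time_estimate} says $\Delta\tau_r$ is a sum of \emph{at most} $X_{\tau_r}$ geometric variables, precisely because a single peeling step of the form $(R,i)$ with large $i$ can swallow many vertices of $\partial B_r$ at once. So the geometric-sum picture only dominates $\Delta\tau_r$ from above and cannot by itself yield a lower bound; as stated, your argument would equally ``prove'' the bound in the subcritical regime, where it is false (there $p_i$ is heavy-tailed and whole stretches of boundary are eaten in one step). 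The statement $\Delta\tau_r\ge\delta X_{\tau_r}$ is nevertheless true for $\alpha>2/3$, but it needs the extra input that the number of boundary vertices swallowed per step has exponential tail and finite mean, plus a large-deviation/Borel--Cantelli argument — exactly the argument of \cref{lem:boundary_linear}. Alternatively you can avoid this ingredient altogether, as the paper does: since every vertex of $\partial B_r$ has left the boundary by time $\tau_{r+1}$, all of $\partial B_{r+1}$ consists of vertices exposed during $(\tau_r,\tau_{r+1}]$, at most one per step, so $X_{\tau_{r+1}}\le\Delta\tau_r$ (note this is strictly stronger than your $X_{\tau_{r+1}}\le X_{\tau_r}+\Delta\tau_r$); combined with $X_{\tau_{r+1}}\ge a'\tau_{r+1}$ from \cref{lem:a.s.boundary} this gives $\Delta\tau_r\ge a'\tau_r$ directly, hence geometric growth of $\tau_r$ with no lower-bound estimate on steps-per-vertex needed.
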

Let $V_n$ denote the number of vertices in the revealed map $P_n$ in the $n$th step of the peeling algorithm. Our main goal is to estimate $V_{\tau_r}$ in order to prove \cref{thm:vol_h_triang}. Now suppose $S_n = V_n - X_n$. Then it is easy to see just from the description of the algorithm that $S_n$ is a sum of $n$ i.i.d. random variables each of which is distributed as $Y+I_{Y+1}$ where 
$Y = -\Delta \tilde{X_n}\mathbbm{1}_{\Delta \tilde{X_n} \neq 1}$
and $I_{Y+1}$ is distributed as a the number of internal vertices of a free triangulation of a $(Y+1)$-gon with parameter $\alpha \beta$. Notice that this definition makes sense for all values of $\alpha$, not for just the supercritical regime.\ However in the supercritical regime, exponential tail of $Y$ entails that $Y$ has finite expectation. Further conditioned on $Y$, the expectation of $I_{Y+1}$ is $4\theta Y/(1-6\theta) + O(1)$ via \cref{prop:free_expectation} where $\theta$ is given by the relation $\theta(1-2\theta)^2 = \alpha\beta$. Thus $Y+I_{Y+1}$ has finite expectation. 
\begin{proof}[Proof of Theorem \ref{thm:vol_h_triang}]
Recall that $|B_r|= V_{\tau_r}$. Now since $S_n$ is a sum of i.i.d. random variables with finite mean, $S_n/n$ converges almost surely. This fact along with \eqref{eq:boundary_bound} and \cref{lem: stop_time} completes the proof.
\end{proof}

\subsubsection{Anchored expansion}\label{sec:anchored}
Now we turn to the proof of \cref{thm:anchored}.
Recall that internal boundary of a simply connected sub-map with a simple boundary is the set of vertices and edges in the sub-map which is incident to at least one finite degree face which do not belong to the sub-map. Also recall that for any two vertices on the boundary, distance along the boundary is the number of edges on the boundary between the vertices. Clearly, distance along the boundary is at least the graph distance in the whole map. We show in the following lemma that the graph distance between vertices on the boundary in the whole map is at least linear in the distance between them along the boundary.

\begin{lem}\label{lem:boundary_linear}
 Let $v$ be a vertex at distance $n \ge 1$ along the boundary from the root vertex on a map with law $\H_\alpha$ where $\alpha \in (2/3,1)$. There exists a constant $t(\alpha)>0$ depending only on $\alpha$ such that the probability of the distance between $v$ and the root being smaller than $t(\alpha)n$ is at most $\exp(-cn)$ for some $c>0$.
\end{lem}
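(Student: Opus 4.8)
The plan is to use the peeling algorithm of \cref{sec:peel_algo}, but starting the exploration from the boundary segment between the root and $v$ rather than from a single vertex, and to show that the hull of a small graph-distance ball around the root cannot reach $v$ because it does not "eat" enough of the boundary fast enough. Concretely, suppose for contradiction that the graph distance $d(v,\text{root})$ is smaller than $tn$ for a small constant $t=t(\alpha)$ to be chosen. Then $v$ lies in $B_{\lceil tn\rceil}(T)$, and in particular the hull $B_{\lceil tn\rceil}$ has internal boundary enclosing both the root and $v$; since the $n$ boundary edges between the root and $v$ are all incident to the external face, they must all have been swallowed by the peeled region by step $\tau_{\lceil tn\rceil}$. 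So the event $\{d(v,\text{root})<tn\}$ is contained in the event that the peeling algorithm, run for $\tau_{\lceil tn\rceil}$ steps, has absorbed at least $n$ specified consecutive boundary edges.

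The key estimates are already essentially in place. First, $X_{\tau_r}=|\partial B_r(T)|$ grows at most linearly: $X_n\le X_{n-1}+1$ always, so $X_{\tau_r}\le\tau_r$, and by \cref{lem: stop_time} we have $\tau_r\le C^r$ eventually; thus $|\partial B_{\lceil tn\rceil}|\le C^{\lceil tn\rceil}$, which for $t$ small enough is $\le e^{\epsilon n}$ for any prescribed $\epsilon>0$. Hence the total number of peeling steps up to time $\tau_{\lceil tn\rceil}$ satisfies, by \cref{lem:time_estimate}(i) together with the linear bound on $|\partial B_r|$, a bound of the form $\tau_{\lceil tn\rceil}\le A'' e^{\epsilon n}$ except on an event of probability $\le e^{-cn}$. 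Now the amount of original boundary consumed is controlled: in each peeling step the number of boundary edges between the root and $v$ that get removed is at most $1+Y$ where $Y$ has the law of $-\Delta\tilde X_n$ on $\{\Delta\tilde X_n\neq 1\}$, i.e. $\P(Y=i)=p_i$, which by \eqref{eq: super_pi} and the asymptotics $p_i\sim c(2/\alpha-2)^i i^{-3/2}$ has exponential tails and finite mean $\mu=\mu(\alpha)$. So the number of consumed root-to-$v$ boundary edges after $N$ peeling steps is stochastically dominated by $N+\sum_{j=1}^N Y_j$ with the $Y_j$ i.i.d. copies of $Y$; a standard large-deviations bound gives $\P(N+\sum_{j\le N}Y_j\ge n)\le e^{-cn}$ whenever $N\le \delta n$ for $\delta=\delta(\alpha)$ small. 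Combining: choose $t$ so small that the bound $A''e^{\epsilon n}$ from the previous step is $\le\delta n$ for all large $n$ (this forces $\epsilon$, hence $t$, to be small), and then $\{d(v,\text{root})<tn\}$ is covered by the union of two events each of probability $\le e^{-cn}$, giving the lemma.

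I would carry out the steps in this order: (1) state the reduction $\{d(v,\text{root})<tn\}\subseteq\{$the root-to-$v$ boundary segment is inside $P_{\tau_{\lceil tn\rceil}}\}\subseteq\{\tau_{\lceil tn\rceil}\ge \text{(some number of steps) and at least }n\text{ boundary edges consumed}\}$; (2) invoke \cref{lem: stop_time} and \cref{lem:time_estimate}(i) to bound $\tau_{\lceil tn\rceil}$ by $e^{\epsilon n}$ outside an $e^{-cn}$-event, choosing $t$ small in terms of the constant $C$; (3) dominate the consumed boundary length over $N$ steps by $N+\sum_{j\le N}Y_j$ and apply a Cramér-type exponential bound using the finite exponential moment of $Y$ coming from $p_i\sim c(2/\alpha-2)^i i^{-3/2}$; (4) fix the constants so all the bad events have probability $\le e^{-cn}$ and conclude.

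The main obstacle is step (2)–(3): the number of peeling steps $\tau_{\lceil tn\rceil}$ is itself random and a priori only bounded by an exponential in $tn$, so I must be careful that the exponential rate $\epsilon$ there can be made smaller than the Cramér rate governing the consumption of $n$ boundary edges in step (3) — i.e. that the "budget" of peeling steps, $O(e^{\epsilon n})$, stays below the threshold $\delta n$ past which consuming $n$ edges becomes exponentially unlikely. Since $\delta n$ is only linear while the crude bound on $\tau$ is exponential, this cannot work directly; the fix is that $t$ (and hence the radius $\lceil tn\rceil$, and hence $\epsilon\asymp t\log C$) can be taken as small as we like, so $e^{\epsilon n}=e^{t(\log C)n}$ — but that is still super-linear. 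The correct resolution is to not bound $\tau_{\lceil tn\rceil}$ by an exponential at all, but to observe, via \eqref{eq:boundary_bound} and \cref{lem:a.s.boundary}, that $\tau_{r}$ is comparable to $X_{\tau_r}=|\partial B_r|$, which grows at a definite exponential rate $c^r$ with $c<C$; thus $\tau_{\lceil tn\rceil}\le C^{\lceil tn\rceil}$, and choosing $t<\log\delta/\log C$ is impossible since the right side is negative — so instead one runs the whole argument with the roles reversed: first pick the small linear budget $\delta n$ of peeling steps, ask how far into the boundary the hull can reach in $\delta n$ steps (at most $\delta n+\sum_{j\le\delta n}Y_j<n$ with overwhelming probability, by step (3)), and separately note that reaching graph-distance radius $tn$ requires at least a number of steps that is at least linear in the boundary length reached, which for $v$ at boundary-distance $n$ is at least $\asymp n$ steps by \cref{lem:boundary_linear}'s own conclusion applied inductively — or more cleanly, directly bound below $\tau_{\lceil tn\rceil}$ using that each step adds at most one vertex to $\partial B$ and at most a bounded-in-expectation number of internal vertices, so that if $v\in B_{\lceil tn\rceil}$ then $|B_{\lceil tn\rceil}|\ge n$ forcing $\tau_{\lceil tn\rceil}\gtrsim n$ steps, which already contradicts being within the $\delta n$ budget once $t$ is small. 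Sorting out this interplay of linear-vs-exponential scales, and in particular isolating the genuinely linear lower bound on the number of peeling steps needed to absorb $n$ consecutive boundary edges, is where the real care is needed.
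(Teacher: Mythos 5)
Your reduction of the event $\{d(\mathrm{root},v)<tn\}$ to the event that the exploration swallows the $n$ boundary edges between the root and $v$ is the right starting point, and it is also where the paper's proof begins. But the way you then control the swallowed boundary --- per peeling step, summed over all steps up to $\tau_{\lceil tn\rceil}$ --- cannot be made to work, and your attempted repairs do not close the gap. In the supercritical regime $\tau_{\lceil tn\rceil}$ is typically of order $C^{tn}$, exponential in $n$ for every fixed $t>0$, so a budget of $\delta n$ peeling steps is simply not available; correspondingly $|B_{\lceil tn\rceil}|=V_{\tau_{\lceil tn\rceil}}$ is typically of order $e^{ctn}\gg n$. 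Hence your ``cleanly'' variant --- deducing from $v\in B_{\lceil tn\rceil}$ that $|B_{\lceil tn\rceil}|\ge n$, hence $\tau_{\lceil tn\rceil}\gtrsim n$, ``which contradicts being within the $\delta n$ budget'' --- contradicts nothing: both $\{\tau_{\lceil tn\rceil}\gtrsim n\}$ and $\{|B_{\lceil tn\rceil}|\ge n\}$ are typical events, not rare ones, and yield no exponential bound. The other variant (that reaching radius $tn$ requires $\asymp n$ steps ``by the lemma's own conclusion applied inductively'') is circular.

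The missing observation, which is the heart of the paper's proof, is that the original boundary to the right of the root is not eaten at every peeling step. In the hull-peeling algorithm of \cref{sec:peel_algo} the exploration sweeps $\partial B_r$ from left to right, so fresh boundary vertices to the right of the root can be swallowed only once the sweep has exhausted the $\partial B_r$ vertices to the right of the current peeling edge, i.e.\ essentially only at the step(s) completing the passage from $B_r$ to $B_{r+1}$; the amount swallowed there is the overshoot of an $(R,i)$ jump (plus at worst a geometric number of further jumps while the last $\partial B_r$ vertex survives), and by the exponential tail of $p_i$ this has exponential tail and finite mean depending only on $\alpha$, uniformly in the past. Consequently the total right boundary swallowed by $B_{\lceil tn\rceil}$ is dominated by a sum of only $\lceil tn\rceil$ variables with uniform exponential tails --- one per radius stage, no matter how many peeling steps each stage takes --- and a Cram\'er-type bound gives probability at most $e^{-cn}$ that such a sum exceeds $n$ once $t$ is small. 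Replacing your count of peeling steps by a count of radius stages is exactly what resolves the linear-versus-exponential tension you identified but did not overcome.
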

\begin{proof}
 Let us assume without loss of generality that $v$ is to the right of the root vertex.
 We use the peeling algorithm described in \cref{sec:peel_algo} and reveal the hulls of radius $r$ for $r\ge1$ around the root vertex. Recall the notations $P_n$ which denotes the revealed map after $n$ peeling steps and $\tau_r$ which denotes the step in which we finish exploring the hull of radius $r$. Now the vertices of the boundary to the right of the root vertex which goes inside the peeled map is entirely determined by the last step and is easily seen to have exponential tail and a finite expectation depending only on $\alpha$. Hence the probability that $v$ is in the hull of radius at most $tn$ around the root vertex is at most the probability of the event that the sum of $tk$ independent variables with finite expectation and exponential tail is larger than $k$. The latter event has probability $\exp(-ck)$ for some $c>0$ if $t$ is small enough (depending only on $\alpha$) by a suitable large deviations estimate. 
\end{proof}

  A connected segment $X$ on the boundary of the map containing the root edge is said to be a $t$-\textbf{bad segment} for some $t>0$ if there exists a simply connected sub-map $Q$ with a simple boundary whose intersection with the boundary of the map is $X$ and the internal boundary has at most $t|X|$ vertices (see \cref{fig:tbad}).

\begin{figure}
 \centering{\includegraphics[scale = 0.7]{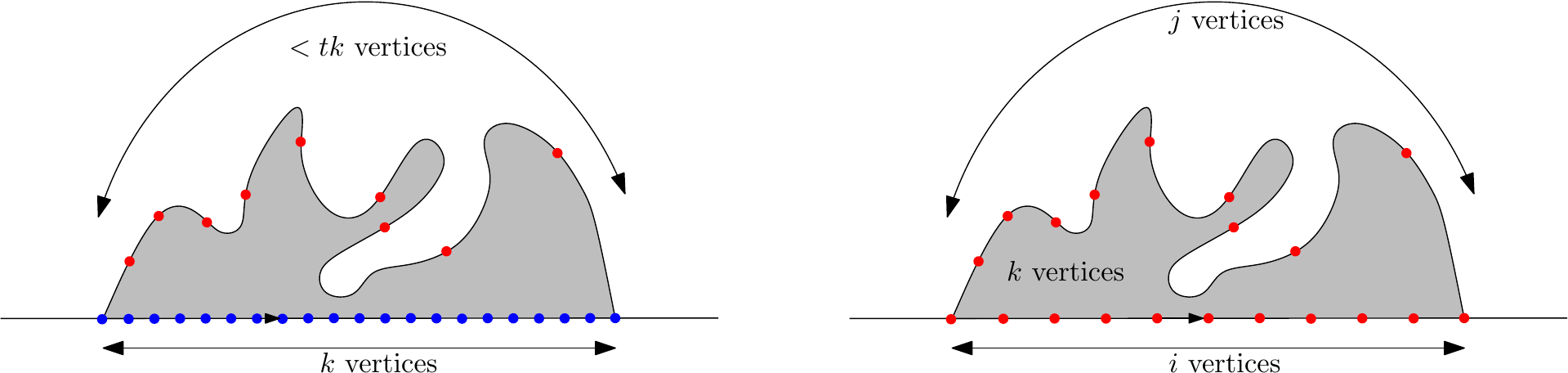}}
\caption{Left: An illustration of a $t$-bad segment. The segment consisting of blue vertices is a $t$-bad segment. The gray area is some fixed finite triangulation. Right: The red vertices form a $(k,i+j)$-separating loop}\label{fig:tbad}
\end{figure}

\begin{lem} \label{lem:segment}
For small enough $t$ (depending on $\alpha$), there exists finitely many $t$-bad segments almost surely. 
\end{lem}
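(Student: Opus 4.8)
The plan is to show that $t$-bad segments are rare enough that a Borel--Cantelli argument closes the lemma. First I would reduce the problem to a statement about segments anchored at a fixed position: by translation invariance, it suffices to control, for each $n\ge 1$, the probability $q_n$ that the segment of length $n$ of the boundary immediately to the right of the root edge is $t$-bad, and then sum over $n$ (and over the finitely many left/right placements relative to the root). So the key estimate to establish is $q_n \le e^{-cn}$ for some $c>0$ once $t$ is chosen small (depending on $\alpha$).

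To bound $q_n$, I would run the peeling algorithm of \cref{sec:peel_algo} but anchored so as to swallow precisely the length-$n$ boundary segment $X$ to the right of the root: peel at the leftmost boundary edge incident to a not-yet-swallowed vertex of $X$, and stop at the first time $\sigma$ that all $n$ vertices of $X$ lie strictly inside the revealed simply connected submap $P_\sigma$. The submap $P_\sigma$ is then a simply connected submap with simple boundary whose intersection with the boundary of the map is exactly $X$, and its internal boundary $\partial P_\sigma$ is the minimal such object in a suitable sense; in particular, if \emph{some} simply connected $Q$ with $Q\cap\partial = X$ has internal boundary of size at most $t n$, then $|\partial P_\sigma|$ is comparably small — more precisely, any such $Q$ must contain $P_\sigma$ (or the finite parts cut off can only add vertices), so $|\partial P_\sigma| \le C t n$ for an absolute constant $C$. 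Hence $q_n \le \H_\alpha(|\partial P_\sigma| \le Ctn)$.

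Now $|\partial P_\sigma|$ is controlled from below by the same random-walk input used in \cref{sec:supercritical}. Writing $X_k$ for the internal-boundary length after $k$ peeling steps in this anchored exploration and $\tilde X_k$ for the i.i.d.-increment lower bound ($\Delta\tilde X_k = 1$ on an $\alpha$-step, $=-i$ on an $(L,i)/(R,i)$-step), we have $X_k \ge \tilde X_k$, and by \cref{lem:expected_change} the drift $\E\Delta\tilde X_k = \sqrt{(3\alpha-2)/\alpha}>0$, with exponential tails on the increments by \eqref{eq: super_pi}. By \cref{lem:boundary_linear} (or directly: the number of $X$-vertices absorbed per peeling step has exponential tail and finite mean), the exploration takes at least $\sim c_1 n$ peeling steps before all of $X$ is swallowed, except on an event of probability $e^{-c_2 n}$; and conditionally on taking $\ge c_1 n$ steps, a one-sided large-deviations estimate for the positive-drift walk $\tilde X_k$ gives $\tilde X_{c_1 n} \ge c_3 n$ except with probability $e^{-c_4 n}$. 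Since $|\partial P_\sigma| = X_\sigma \ge \tilde X_\sigma \ge \tilde X_{c_1 n}$, choosing $t$ small enough that $Ct < c_3$ yields $q_n \le e^{-cn}$. Summing over $n$ and invoking Borel--Cantelli gives the lemma.

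The main obstacle I expect is the geometric reduction "if some $Q$ is $t$-bad then the canonically explored $P_\sigma$ has small internal boundary": one must argue that the anchored peeling exploration is, up to a bounded multiplicative error, the \emph{cheapest} way (in internal-boundary length) to isolate the segment $X$, so that the existence of an adversarially chosen thin $Q$ forces $X_\sigma$ itself to be small. This requires a careful use of one-endedness and the fact that $P_\sigma$ together with the finite components it cuts off is the minimal simply connected hull containing $X$ in its boundary; once that monotonicity is in hand, everything else is the by-now-standard positive-drift large-deviations bookkeeping already invoked in this section.
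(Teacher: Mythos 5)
The core of your argument rests on the ``geometric reduction'': that if \emph{some} simply connected $Q$ isolating the segment $X$ has internal boundary of size at most $tn$, then the internal boundary of your anchored peeling hull $P_\sigma$ is at most $Ctn$. This step is not just unproven — it is false as stated, because internal boundary length is \emph{not} monotone under inclusion of submaps. Even granting that $P_\sigma$ (together with the finite components it cuts off) sits inside $Q$, nothing forces $|\partial P_\sigma|\le C|\partial Q|$: $P_\sigma$ is essentially the hull of the faces incident to $X$, and its internal boundary can be of order $n$ while a much thinner separating path exists farther away. Concretely, imagine that the faces incident to the vertices of $X$ have their apices on a path $\pi$ of length comparable to $n$ lying just above $X$, that above $\pi$ sits a large blob of triangulation, and that this blob is closed off from infinity by a path $\gamma$ of bounded length joining the two endpoints of $X$. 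Then $X$ is $t$-bad (take $Q$ to be everything below $\gamma$), yet your exploration stops as soon as all of $X$ is swallowed, with $|\partial P_\sigma|$ of order $n$, so the event $\{|\partial P_\sigma|\le Ctn\}$ does not occur and your bound $q_n\le \H_\alpha(|\partial P_\sigma|\le Ctn)$ breaks down. (There is also a smaller issue downstream: $\tilde X_k$ is not monotone in $k$, so $\tilde X_\sigma\ge \tilde X_{c_1 n}$ needs to be replaced by a statement that the positive-drift walk stays above a linear line for all $k\ge c_1 n$; that part is routine, but it does not save the reduction.)

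The paper avoids this trap by extracting a much weaker, but sufficient, consequence of $t$-badness: since $Q$ is simply connected with simple boundary and meets the boundary of the map exactly in $X$, its internal boundary is a path in the map joining the leftmost and rightmost vertices of $X$; hence $t$-badness forces the \emph{graph distance} between the two endpoints of $X$ to be at most $t|X|$. By \cref{lem:boundary_linear} and translation invariance this event has probability at most $e^{-ck}$ for segments of length $k$ once $t$ is small (depending on $\alpha$), and a union bound over the at most $k$ segments of length $k$ containing the root followed by Borel--Cantelli finishes the proof. (Your ``finitely many placements'' remark should likewise be a union over roughly $n$ placements, which the exponential bound absorbs.) If you replace your hull-isolation step by this endpoint-distance observation, the rest of your large-deviations bookkeeping becomes unnecessary, since it is already packaged in \cref{lem:boundary_linear}.
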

\begin{proof}
 Let us fix a connected segment $X$ of length $k$ containing the root edge. The event that $X$ is $t$-bad is contained in the event that the distance (in the whole map) between the leftmost and the rightmost vertices in $X$ is at most $tk$. If $t>0$ is small enough this event has probability at most $\exp(-ck)$ for some $c>0$ using \cref{lem:boundary_linear} and translation invariance. Since there are at most $k$ connected segments of length  $k$ containing the root, the rest of the proof follows from Borel-Cantelli. 
\end{proof}

 We shall need the following Lemma which essentially follows from Lemma 3.2 of \cite{AR13} and Euler's formula.
\begin{lem}\label{lem:prob}
Fix $\alpha \in  [0,1)$ and $\beta$ is given by \cref{eq:betasub,eq:betasuper}. Let $Q$ be a simply connected triangulation with $k+i+j$ vertices and boundary size $i+j$. Let $T$ be a sample from $\H_\alpha$. The event that $Q$ is a sub-map of $T$ with a marked connected segment containing $i$ vertices on the boundary of $Q$ being mapped to a marked connected segment on the boundary of $T$ with $i$ vertices and no other vertex of $Q$ being mapped to the boundary of $T$ has probability
\[
 \alpha^{k+j}\beta^{i+k-2}
\]
\end{lem}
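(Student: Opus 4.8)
The plan is to compute the probability directly by building the triangulation $Q$ inside $T$ one triangle at a time via the peeling procedure, using the domain Markov property to multiply single-step probabilities. Since $Q$ is a fixed simply connected triangulation with a simple boundary of size $i+j$, containing $k$ internal vertices (so $k+i+j$ vertices total), it has a fixed number of triangular faces; Euler's formula for a triangulation of an $(i+j)$-gon with $k$ internal vertices gives the face count $F = 2k + (i+j) - 2$. The idea is that revealing $Q$ triangle-by-triangle amounts to $F$ peeling steps, and by domain Markov property the probability that $T$ contains $Q$ (with the prescribed boundary identification) is the product of the probabilities of each individual peeling step being the one dictated by $Q$.

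\textbf{Classifying the steps.} Each of the $F$ peeling steps is of one of the basic types discussed in \cref{sec:peeling0}: an $\alpha$-step (probability $\alpha$), which introduces a new internal vertex, or a step of the form $(L,1)$ or $(R,1)$ with no internal vertex swallowed — call this a $\beta$-step — which glues the peeled edge to an already-present boundary vertex. (Here we invoke Lemma 3.2 of \cite{AR13}, which records that the single-step probability for the relevant "gluing to a specified neighbouring boundary vertex" event is $\beta$; steps of the form $(L,i)$ or $(R,i)$ with $i \ge 2$ or with enclosed internal vertices cannot occur in an explored finite simply connected triangulation of this form once one peels in the canonical order.) The key bookkeeping point: among the $F$ steps, exactly $k$ must be $\alpha$-steps, since each $\alpha$-step creates precisely one of the $k$ internal vertices of $Q$, and the remaining $F - k = k + (i+j) - 2$ steps are $\beta$-steps. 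The marked segment of $i$ vertices on $\partial Q$ being identified with a prescribed segment of $i$ boundary vertices of $T$, and no other boundary vertex of $Q$ touching $\partial T$, pins down exactly which boundary edges of $T$ get consumed, so there is no extra combinatorial multiplicity — the identification is rigid.

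\textbf{Assembling the product.} Multiplying, the probability is $\alpha^{k} \cdot \beta^{\,k + (i+j) - 2}$. This does not match the claimed $\alpha^{k+j}\beta^{i+k-2}$ unless $j$ of the $\beta$-factors are reinterpreted as $\alpha$-factors; this is precisely where I would be careful — the discrepancy of $\alpha^j$ versus $\beta^j$ must come from the boundary-structure of $Q$ near the $j$ unmarked boundary vertices. Concretely, the $j$ boundary vertices of $Q$ not on the marked segment are interior to $\partial T$'s picture in the sense that they are "created" rather than "matched": peeling must produce each of them by an $\alpha$-step (it becomes a new boundary vertex of the growing explored region) rather than by a gluing $\beta$-step. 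Re-counting with this distinction: $k + j$ steps are $\alpha$-type (the $k$ genuine internal vertices plus the $j$ new boundary vertices), and the remaining $F - (k+j) = k + i - 2$ steps are $\beta$-type, giving exactly $\alpha^{k+j}\beta^{i+k-2}$. So the real content of the proof is this careful classification of the $F = 2k+i+j-2$ peeling steps into $k+j$ vertex-creating ($\alpha$) steps and $k+i-2$ gluing ($\beta$) steps, justified by Euler's formula and by tracking which vertices of $Q$ are "matched to the prescribed boundary" versus "freshly revealed."

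\textbf{Main obstacle.} The delicate step is verifying that the peeling exploration of $Q$ really can be carried out so that every step is of type $\alpha$ or $\beta$ with no $(L,i)$/$(R,i)$ for $i\ge 2$ and no swallowing of internal vertices — i.e., that one can choose a peeling order of the faces of $Q$ realizing the decomposition above — together with pinning down that there is no combinatorial overcounting in the boundary identification. Once the step-type census $(k+j$ many $\alpha$-steps, $k+i-2$ many $\beta$-steps$)$ is established, the domain Markov property makes the probability computation immediate, and this is exactly the content flagged as "follows from Lemma 3.2 of \cite{AR13} and Euler's formula."
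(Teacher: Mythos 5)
Your final bookkeeping is correct and is essentially the computation the paper intends (the paper's own ``proof'' is just the one-line reduction to Lemma 3.2 of \cite{AR13} plus Euler's formula): the exponent of $\alpha$ is the number of vertices of $Q$ not lying on $\partial T$, namely $k+j$, because an $\alpha$-step is characterized by revealing a vertex not on the boundary of the unexplored map, regardless of whether that vertex is interior to $Q$ or one of the $j$ boundary vertices of $Q$ off the marked segment; and the exponent of $\beta$ is then $(2k+i+j-2)-(k+j)=k+i-2$ by the face count from Euler's formula. Your first census (``exactly $k$ $\alpha$-steps'') was wrong for exactly the reason you subsequently identify, and your corrected census is the right invariant.

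However, as written your argument has a genuine gap at the very point you flag: you assert, without proof, that one can choose a peeling order of the faces of $Q$ in which every step is a pure $\alpha$-step or a pure $\beta$-step (no $(L,m)$/$(R,m)$ with $m\ge 2$, and no $(R,1)$/$(L,1)$ step enclosing part of $Q$). That is a nontrivial combinatorial claim (think of a fan triangulation, where most boundary edges see a far-away third vertex; one has to prove an ``ear'' or a fresh-vertex face is always available), and your proof stands or falls with it. The cleaner route---and what the citation to \cite{AR13} is really doing---is to drop this requirement altogether: a single peeling step of type $(R,m)$ or $(L,m)$ whose enclosed region is filled by a \emph{specified} sub-triangulation of $Q$ with $s$ internal vertices has probability $\alpha^{s}\beta^{m+s}$, i.e.\ again $\alpha$ to the number of newly revealed vertices off the current boundary times $\beta$ to the number of revealed faces minus that; this is precisely what \eqref{eq:sub_pik} and \eqref{eq:super_pik} (equivalently Lemma 3.2 of \cite{AR13}) give, after dividing out the factor $2\phi_{k,i+1}$. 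With that single-step monomial in hand, the domain Markov property lets you multiply over the steps of \emph{any} peeling order, the exponents telescope, and Euler's formula yields $\alpha^{k+j}\beta^{k+i-2}$ with no need for a ``canonical order''. (One should also record the easy observation that any finite region enclosed during this exploration lies inside the region occupied by $Q$, since its boundary consists of edges of $Q$ and possibly edges of the marked segment, so on the event in question it is indeed filled by faces of $Q$.)
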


 We call a simple cycle in a half planar map a \textbf{$(k,l)$ separating loop} if it has $l$ vertices, its intersection with the boundary forms a connected segment containing the root edge and it separates $k$ internal vertices of the map from infinity. 
\begin{lem}\label{lem:expansion}
 Fix $\alpha \in (2/3,1)$. There exists a constant $c(\alpha)$ depending upon $\alpha$ such that $\H_\alpha$-almost surely there are finitely many $(k,l)$-separating loops with $l<c(\alpha)k$.
\end{lem}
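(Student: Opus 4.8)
The plan is to bound the expected number of $(k,l)$-separating loops with $l < c(\alpha)k$ and apply Borel--Cantelli. Fix $k$ and $l$ with $l < c(\alpha)k$. A $(k,l)$-separating loop consists of a connected boundary segment $X$ (containing the root edge) together with an arc in the interior; if $X$ has $i+1$ boundary vertices then the interior arc has $l - i - 1$ vertices, and the loop bounds a simply connected triangulation $Q$ with $l$ boundary vertices and $k$ internal vertices, hence $k+l$ vertices total. By \cref{lem:prob} (with boundary size $l$, internal vertices $k$, and the marked segment being $X$), the probability that a fixed such $Q$ appears in $T$ in the prescribed way is $\alpha^{k+j}\beta^{i+k-2}$ in the notation there; the precise exponents are not important — what matters is that this probability is at most $(\alpha\beta)^{k}\cdot\beta^{-O(1)}\cdot C^{l}$ for suitable constants, i.e. it decays like $((\alpha\beta)\,\gamma)^{k}$ times a factor exponential in $l$, where $\alpha\beta < 2/27$ (since $\alpha\beta$ is a valid free-triangulation parameter). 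Summing over all simply connected triangulations $Q$ with $l$ boundary vertices and $k$ internal vertices introduces the combinatorial factor $\sum_{i} \phi_{k, l}$ (up to choosing where the marked segment sits, which costs a further polynomial-in-$l$ factor), and by \cref{prop:count} together with the asymptotics \eqref{eq:asymp} this count grows at most like $(\tfrac{27}{2})^{k} 9^{l} \cdot \mathrm{poly}$.

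Multiplying the per-configuration probability by the number of configurations, the $k$-dependence combines to $\bigl((\alpha\beta)\cdot\tfrac{27}{2}\bigr)^{k}$ times an exponential-in-$l$ factor $D^{l}$ for some constant $D$. Since $\alpha\beta < 2/27$ strictly (this uses $\alpha \in (2/3,1)$, so $\alpha\beta = \alpha^2(1-\alpha)/2$ lies strictly below $2/27$), we have $\rho := (\alpha\beta)\cdot\tfrac{27}{2} < 1$. Now choosing $c(\alpha) > 0$ small enough that $\rho\, D^{c(\alpha)} < 1$, the expected number of $(k,l)$-separating loops with $l < c(\alpha)k$, summed over all $l < c(\alpha)k$ and then over all $k$, is dominated by a convergent geometric-type series. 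By Borel--Cantelli, almost surely only finitely many such loops occur.

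The main obstacle is bookkeeping the exponents carefully: one must check that the exponent of $\alpha\beta$ coming out of \cref{lem:prob} is genuinely linear in $k$ with coefficient $1$ (not something smaller that would spoil the comparison with the $(\tfrac{27}{2})^k$ growth of the number of triangulations), and that all stray factors — the choice of $i$, the placement of the marked segment, the lower-order terms in \eqref{eq:asymp} — are at most polynomial in $k$ and $l$ and hence harmless once $\rho D^{c(\alpha)} < 1$. A secondary point is to make sure the strict inequality $\alpha\beta < 2/27$ is used (it fails at $\alpha = 2/3$, which is exactly where the lemma should break down), giving $\rho < 1$ with room to spare for the extra $D^{c(\alpha)}$ factor. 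Once these are pinned down the argument is a routine first-moment computation.
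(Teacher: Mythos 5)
Your proposal is correct and follows essentially the same route as the paper: a first-moment/union bound combining \cref{lem:prob} with the count $\phi_{k,l}$ and its asymptotics \eqref{eq:asymp}, the key strict inequality $\alpha\beta=\alpha^2(1-\alpha)/2<2/27$ (equivalently $27\alpha^2(1-\alpha)/4<1$) for $\alpha\in(2/3,1)$, a choice of $c(\alpha)$ small enough to absorb the remaining exponential-in-$l$ factors, and Borel--Cantelli. One minor correction: the factor $\left(1+\frac{2l}{3k}\right)^{2l+3k}$ in \eqref{eq:asymp} is not polynomial in $k$ and $l$, but for $l<c(\alpha)k$ it is at most $e^{Cl}$ and so is absorbed into your $D^{l}$, which is exactly how the paper treats it via the $\left(1+\frac{2t}{3}\right)^{(2t+3)k}$ term.
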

\begin{proof}
  Recall that $\phi_{n,m}$ denotes the number of triangulations of an $m$-gon with $n$ internal vertices. From \cref{lem:prob}, \eqref{eq:betasuper} and union bound, the probability that there exists a $(k,l)$-separating loop with $i$ vertices on the boundary of the map is at most
\begin{equation}
  i\phi_{k,l}\alpha^{k+j} \left(\frac{\alpha(1-\alpha)}{2}\right)^{k+i-2} \label{eq:expansion1}
\end{equation}
 where the factor $i$ comes from the fact that the root can be any one of the edges of the intersection of the separating loop with the boundary. Let $j=l-i$. Now it is easy to see from \cref{eq:asymp} that
\begin{equation}
 \phi_{k,l} < (27/2)^k9^{l}\left(1+\frac{2l}{3k}\right)^{2l+3k} k^{-5/2} \sqrt{l} \label{eq:expansion2}
\end{equation}
  Combining \eqref{eq:expansion1} and \eqref{eq:expansion2} and summing over $i<l$ where $l<tk$, we get that the probability of existence of a $(k,l)$-separating loop is at most
\begin{multline}
 l^{5/2}k^{-5/2}\cdot \left(\frac{27 \alpha^2(1-\alpha)}{4}\right)^k \cdot \left(\frac{9\alpha(1-\alpha)}{2}\right)^l \cdot \left(1+\frac{2t}{3}\right)^{(2t+3)k} \cdot \left( \frac{2}{1-\alpha}\right)^{tk} \\<\exp(-ck) \label{eq:expansion3}
\end{multline}
for some constant $c>0$ if $t$ is small enough. To see this, observe that $\alpha^2(1-\alpha) < 4/27$ and $\alpha(1-\alpha) <2/9$ if $\alpha \in (2/3,1)$. The sum of the bound in \eqref{eq:expansion3} over $k>l/t$ and then over $l$ is finite. The rest of the proof follows from Borel-Cantelli.	
\end{proof}
For $S \subset V(G)$, recall that the notation $|S|_E$ denotes the sum of the degrees of the vertices in $S$ and $\partial_ES$ denotes the number of edges which are incident to one vertex in $S$ and another in $G \setminus S$.
\begin{proof}[Proof of \cref{thm:anchored}]
 Consider a connected set of vertices $S$ containing the root vertex such that $|S|_E >n$ and suppose $\partial_E S <t^2|S|_E$ for some $t>0$. By an abuse of notation, denote by $S$ the finite map induced by $S$ and without loss of generality assume it contains the root edge. Add to $S$ all the faces which share at least one vertex with $S$ along with the edges and vertices incident to it. Then add all the connected finite components of the complement and call the resulting finite triangulation $\overline{S}$. Note that $\overline{S}$ is simply connected with a simple boundary and $|\overline{S}|>n$. Also, the vertices and edges in the boundary of $\overline{S}$ form a separating loop. Suppose the internal boundary of $\overline{S}$ has $j$ vertices. From the definition of $\partial_ES$: $j<\partial_E S<t^2|S|_E$. Let $i$ be the number of vertices of $S$ on the boundary of the map and suppose $k = |\overline{S}|-i-j$. Now the assumption $\partial_E S <t^2|S|_E$ and Euler's formula for $\overline{S}$ yields
\begin{equation}
  k> \frac{1-5t^2}{6}|S|_E - \frac{2i}{3}\label{eq:anchored1}
\end{equation}
 If $i<t|S|_E$ then $i+j<t Ck$ for some universal constant $C>0$ using \eqref{eq:anchored1} which can occur for finitely many $n$ almost surely via \cref{lem:expansion} if $t$ is small enough. If $i>t|S|_E$ then $j<ti$ and this can occur for finitely many $n$ almost surely via \cref{lem:segment}. 
\end{proof}
 Proofs of \cref{lem:expansion,lem:segment,thm:anchored} in fact says that the probability of the existence of a set with small boundary containing the root vertex is exponentially small. We record it here for future reference.
\begin{prop}
 There exists a $t>0$ depending only upon $\alpha$ such that the probability that there exists a connected set of vertices $S$ containing the root vertex such that $|S|_E >n$ and  $\partial_E S <t|S|_E$ is at most $\exp(-cn)$ for some $c>0$.
\end{prop}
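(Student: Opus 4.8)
The plan is to revisit the proof of \cref{thm:anchored} and observe that every estimate used there is not merely summable but exponentially small, with decay rate controlled by $n$. Fix $t>0$ small (to be chosen as in the proof of \cref{thm:anchored}, depending only on $\alpha$); we will bound the probability that there exists a connected $S\ni v$ (the root vertex) with $|S|_E>n$ and $\partial_E S<t^2|S|_E$, and then rename $t^2$ as the $t$ of the Proposition statement. Given such an $S$, form $\overline S$ as in the proof of \cref{thm:anchored}: add all faces incident to a vertex of $S$ together with their edges and vertices, then fill in the finite components of the complement. Then $\overline S$ is simply connected with a simple boundary, its boundary is a separating loop, and writing $i$ for the number of vertices of $\overline S$ on the boundary of the map, $j$ for the number of internal-boundary vertices of $\overline S$, and $k=|\overline S|-i-j$, we have $j<\partial_E S<t^2|S|_E$ and, by \eqref{eq:anchored1}, $k>\frac{1-5t^2}{6}|S|_E-\frac{2i}{3}>\frac{1-5t^2}{6}n-\frac{2i}{3}$.

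In the first case, $i<t|S|_E$. Then from $i<t|S|_E$ and \eqref{eq:anchored1} we get $k>\bigl(\tfrac{1-5t^2}{6}-\tfrac{2t}{3}\bigr)|S|_E\ge c_1 n$ for a constant $c_1=c_1(t)>0$ once $t$ is small, and moreover $l:=i+j<(t+t^2)|S|_E\le 8(t+t^2)k$, so that $l<c(\alpha)k$ for $t$ small enough, where $c(\alpha)$ is the constant of \cref{lem:expansion}. Hence the boundary of $\overline S$ is a $(k,l)$-separating loop with $k\ge c_1 n$ and $l<c(\alpha)k$. Exactly the computation in the proof of \cref{lem:expansion} (culminating in \eqref{eq:expansion3}) shows that the probability of existence of a $(k,l)$-separating loop with $l<c(\alpha)k$ is at most $\exp(-ck)$; summing the polynomial-in-$(k,l)$ prefactor over $i\le l<c(\alpha)k$ and then over $k\ge c_1 n$ leaves a geometric tail, bounded by $\exp(-c_2 n)$.

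In the second case, $i\ge t|S|_E\ge tn$. Then $j<t^2|S|_E\le t\,i$, so the connected boundary segment of length $i$ formed by the boundary vertices of $\overline S$, together with $\overline S$ itself, exhibits this segment as a $t$-bad segment in the sense of \cref{lem:segment}. By \cref{lem:prob} and translation invariance, each fixed connected boundary segment of length $i$ containing the root edge is $t$-bad with probability at most $\exp(-ci)$ once $t$ is small; there are at most $i$ such segments, and summing $i\exp(-ci)$ over $i\ge tn$ gives at most $\exp(-c_3 n)$. Combining the two cases and taking $c=\min(c_2,c_3)$ (and absorbing constants) yields the asserted bound $\exp(-cn)$.

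The only real work is bookkeeping, and I expect no genuine obstacle: one must check in each case that the controlling size parameter ($k$ in the first case, $i$ in the second) is at least a fixed multiple of $n$, so that the exponential bounds already established in \cref{lem:expansion,lem:segment} translate into decay in $n$; the polynomial factors counting configurations (the ranges of $l$ and $i$, or the choice of segment) are harmless against the exponential decay, precisely as in those lemmas. No new geometric or probabilistic input is required beyond \cref{lem:prob,lem:expansion,lem:segment} and Euler's formula as used in the proof of \cref{thm:anchored}.
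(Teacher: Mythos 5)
Your proposal is correct and is exactly the argument the paper has in mind: the paper states this proposition without a separate proof precisely because, as you observe, the proofs of \cref{lem:expansion}, \cref{lem:segment} and \cref{thm:anchored} already give exponential bounds whose rates (through $k\ge c_1 n$ in the first case and $i\ge tn$ in the second) translate into decay in $n$. One small attribution slip: the per-segment bound $\exp(-ci)$ for a fixed $t$-bad segment comes from \cref{lem:boundary_linear} (as used in the proof of \cref{lem:segment}), not from \cref{lem:prob}, but this does not affect the argument.
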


 \subsection{Subcritical}\label{sec:subcritical}
In this section we prove \cref{thm: boundary_ball_tight}.
We shall use the notations of \cref{sec:peel_algo,sec:supercritical} and assume $\alpha \in [0,2/3)$ throughout this section. Further, $B_r$ will denote the hull of the ball of radius $r$ around the root in a map $T$ with law $\H_\alpha$ Recall that in this regime, probability that a peeling step of the form $(L,i)$ or $(R,i)$ occurs for $i \ge k$ is roughly $k^{-1/2}$.

\begin{subsubsection}{Boundary size estimates}\label{sec: boundary_size_estimates}
To understand the boundary sizes, we need to understand the variables $X_{\tau_r}$ for $r \ge 1$. As a warm up we prove 

\begin{prop}\label{prop:cutset}
In a half planar triangulation with law $\H_\alpha$ where $\alpha \in [0,2/3)$,
there exists infinitely many cutsets each of which consists of a single edge almost surely.
\end{prop}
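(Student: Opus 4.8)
The plan is to exhibit infinitely many edges $e$ on the boundary of $T$ with the property that removing $e$ disconnects a finite piece of $T$ (containing the root) from infinity; such an $e$, together with the convention that a single-edge cutset is admissible, gives the claim. The natural source of such edges is the peeling process of \cref{sec:peeling0}: whenever a peeling step on a boundary edge is of the form $(R,1)$ and encloses a $2$-gon with no internal vertex — an event of probability $\beta$ at each step by \eqref{eq:betasub} — the revealed triangle, which has its two non-peeled sides as boundary edges, creates in particular one boundary edge that is ``doubled'' and the finite component swallowed is just that empty $2$-gon; more usefully, a step of the form $(L,i)$ or $(R,i)$ always cuts off a finite component. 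To get a genuine single-edge cutset I would argue as follows. Run the peeling algorithm; at each step $n$ the current edge $e_n$ lies on the boundary of $T_n \sim \H_\alpha$. With probability $p_1 - \beta \cdot(\text{stuff})$ — more precisely with the probability that the peeled triangle is of type $(R,1)$ or $(L,1)$ enclosing a $2$-gon containing exactly one internal vertex, or simply with probability $\beta$ that it encloses an empty $2$-gon — the step produces a bounded configuration. I would isolate the cleanest event: the peeled triangle glues the peeled edge $e_n$ to its left neighbour on $\partial T_n$, enclosing a $2$-gon with no internal vertices. On this event the single edge $e_n$ (or the single edge of the $2$-gon) separates $P_{n+1}$ from $T_{n+1}$.

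The key mechanism is the domain Markov property: since at each peeling step $T_n$ is distributed as $\H_\alpha$ independently of $P_n$, the events $A_n = \{\text{step } n \text{ creates a single-edge cut}\}$ each have probability bounded below by the fixed positive constant $\beta = (2-\alpha)^2/16 > 0$, and — restricting attention to a subsequence of peeling steps performed on \emph{distinct} boundary edges so that the events become independent — a Borel–Cantelli / second-moment argument shows infinitely many of the $A_n$ occur almost surely. Each occurrence yields a distinct edge $e$ such that $T \setminus e$ has the root in a finite component, i.e. a single-edge cutset; distinctness of the edges follows because once an edge is swallowed into $P_n$ it never returns to the boundary of $T_m$ for $m \ge n$. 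Hence almost surely there are infinitely many single-edge cutsets.

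One subtlety to handle carefully is the bookkeeping of \emph{which} edge is the cutset and that it is honestly a single edge rather than a short path: when the peeled triangle of type $(R,1)$ encloses a $2$-gon with one internal vertex the ``neck'' is still a single edge, whereas for larger $i$ one gets a cutset of $i$ edges; so I would phrase the favourable event purely in terms of $\beta$ (empty $2$-gon) or, to also pick up the case needed later for the decomposition \eqref{eq:decomposition}, in terms of a $2$-gon containing an arbitrary finite triangulation, whose neck is always the single peeled edge. A second point: the peeling algorithm as stated re-roots and may peel many times near the same region, so to get genuine independence I would extract the subsequence of steps $n_1 < n_2 < \cdots$ at which a \emph{fresh} boundary edge (not incident to any previously swallowed triangle) is peeled; along this subsequence the $T_{n_j}$ are i.i.d. copies of $\H_\alpha$ and the events are independent with probability $\ge \beta$.

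The main obstacle I expect is not conceptual but organizational: making the independence/distinctness argument clean without getting tangled in the re-rooting conventions of the peeling algorithm, and confirming that the ``single-edge'' neck genuinely disconnects $T$ (one-endedness of $T$ guarantees the infinite side is connected, so this is where we invoke that $T$ is one-ended). Once those are pinned down the Borel–Cantelli step is routine.
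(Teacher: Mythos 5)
There is a genuine gap, and it is the central one: the event you isolate does not produce a cutset in the sense of the proposition. A cut-set here must leave the \emph{root} in the finite component, so the single edge has to separate the whole explored region containing the root from infinity. Your favourable event (a step of type $(R,1)$ enclosing an empty $2$-gon, probability $\beta$) only pinches off a trivial finite piece next to the peeled edge; it does not make "the single edge $e_n$ separate $P_{n+1}$ from $T_{n+1}$" --- after such a step the internal boundary of $P_{n+1}$ is still typically long, and the root remains connected to infinity through the rest of that boundary. A decisive sanity check: your argument never uses $\alpha<2/3$. The constant $\beta$ is also strictly positive in the supercritical regime (see \eqref{eq:betasuper}), so the same reasoning would "prove" infinitely many single-edge cutsets around the root for $\alpha>2/3$, which is false (it would contradict anchored expansion, \cref{thm:anchored}, and \cref{lem:expansion}). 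So a per-step event of probability bounded below cannot be the right mechanism; subcriticality must enter.

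The paper's proof uses exactly the ingredient your proposal is missing: the heavy tail $p_i\sim c\,i^{-3/2}$, which is special to $\alpha<2/3$. Since the internal boundary of the explored region after $n$ peeling steps has at most $n+2$ vertices, one asks that at step $2k$ a step $(R,j)$ with $j>2k+2$ occurs and at step $2k+1$ a step $(L,j)$ with $j>2k+2$ occurs; these two huge swallowing steps engulf the entire explored boundary on both sides, leaving an internal boundary of exactly two vertices, i.e.\ a single edge separating the root from infinity. Each such double event has probability of order $1/k$ (not bounded below --- this is where the $i^{-3/2}$ tail is used), the events for different $k$ are independent by the domain Markov property, and the divergence of $\sum 1/k$ plus the second Borel--Cantelli lemma gives infinitely many occurrences, hence infinitely many distinct single-edge cutsets. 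Your Borel--Cantelli/independence bookkeeping and the observation that swallowed edges never return are fine, but they are attached to the wrong event; to repair the proof you must replace the constant-probability $2$-gon event by an event that swallows the whole current explored boundary, and then the probability necessarily decays and the heavy tail is what saves the divergence of the series.
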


\begin{proof}
Notice that $X_n \le n+2$ for all $n \in \N$ just by its definition. If the event $\cup_{j >2k+2}\{(R,j)\}$ occur at step $2k$ and the event $\cup_{j>2k+2}\{(L,j)\}$ occur at step $2k+1$, then $X_{2k+2} =2$. But this event has probability at least $c/k$ for some $c>0$ and for different $k$'s these events are independent by the domain Markov property. The proof follows by Borel-Cantelli.
\end{proof}

\cref{prop:cutset} and the Nash-Williams criterion for recurrence (see \cite{MCMT}, Proposition 9.15) immediately implies

\begin{prop}\label{prop:recurrent}
 Simple random walk on a half planar triangulation with law $\H_\alpha$ is recurrent almost surely for $\alpha\in [0,2/3)$.
\end{prop}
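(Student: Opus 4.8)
The plan is to deduce recurrence directly from the Nash-Williams criterion, fed with the single-edge cut-sets furnished by \cref{prop:cutset}. First I would recall the criterion in the form stated in \cite{MCMT}, Proposition 9.15: if $G$ is a locally finite connected network with conductances $c(\cdot)$ and a distinguished vertex $o$, and if there exist pairwise edge-disjoint cut-sets $\Pi_1,\Pi_2,\ldots$, each separating $o$ from infinity, with $\sum_n \bigl(\sum_{e\in\Pi_n} c(e)\bigr)^{-1} = \infty$, then the network is recurrent. Simple random walk on a map corresponds to the network in which every edge has conductance $1$.

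Next I would extract the required data from \cref{prop:cutset}. On the full-measure event produced there, $T$ has infinitely many cut-sets each consisting of a single edge; since distinct cut-sets are distinct edges, this yields a sequence of distinct edges $e_1,e_2,\ldots$ such that, for each $n$, deleting $e_n$ leaves the root vertex in a finite component of $T$. Thus $\Pi_n := \{e_n\}$ is a cut-set separating the root from infinity, and because the $e_n$ are distinct the $\Pi_n$ are pairwise edge-disjoint. With unit conductances, $\sum_{e\in\Pi_n} c(e) = 1$ for every $n$, so $\sum_n \bigl(\sum_{e\in\Pi_n} c(e)\bigr)^{-1} = \sum_n 1 = \infty$, and the Nash-Williams criterion gives that simple random walk on $T$ is recurrent almost surely.

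There is essentially no obstacle beyond \cref{prop:cutset} itself: the only points to check are that the single-edge cut-sets may be taken to be distinct edges, and that a single-edge cut-set, by the paper's definition of cut-set, leaves the root in a finite component and hence is a cut-set in the sense demanded by Nash-Williams. Both are immediate, so the proof is a two-line deduction once \cref{prop:cutset} is available.
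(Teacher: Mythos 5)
Your proposal is correct and follows exactly the paper's argument: the paper also deduces recurrence immediately from \cref{prop:cutset} together with the Nash--Williams criterion (\cite{MCMT}, Proposition 9.15), with the single-edge cut-sets playing the role of the disjoint cut-sets and unit conductances making the relevant sum diverge. Your additional remarks (distinctness of the edges, the root lying in the finite component) just make explicit what the paper treats as immediate.
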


 We know via \cref{prop:algorithm} that $X_{\tau_r}$ is an irreducible aperiodic Markov chain with state space $\N \setminus \{0,1\}$. We now show that $\{X_{\tau_r}\}_{r\ge 1}$ is a tight sequence with exponential tail. Suppose $N_{k}(r) $ for $k\ge 0$ denote the number of vertices in the internal boundary of $P_{\tau_r+k}$ which do not belong to $\partial B_r$.


\begin{lem}\label{lem:step}
For any $r\ge 0,k\ge 1$,$n \ge 1$
\[
\P(N_k(r) > n| X_{\tau_r}) <\exp(-Bn)
\]
for some positive constant $B$ which do not depend upon $r,k$ or $n$. In particular, this bound is independent of the conditioning on $X_{\tau_r	}$.
\end{lem}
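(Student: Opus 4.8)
The plan is to run the peeling algorithm of \cref{sec:peel_algo} started at step $\tau_r$ and to follow the vertices of the internal boundary $\partial P_{\tau_r+k}$ that do not lie in $\partial B_r$. By the domain Markov property the peeling steps performed after $\tau_r$ are i.i.d.\ (an $\alpha$-step with probability $\alpha$; an $(L,i)$- or an $(R,i)$-step, each with probability $p_i/2$), so the whole argument is insensitive to the value of $X_{\tau_r}$: that value only bounds how many vertices of $\partial B_r$ must eventually be swallowed, and every estimate below will be uniform in that number. Since the algorithm always peels the edge just to the right of the current leftmost surviving vertex $v$ of $\partial B_r$, newly appearing vertices — the $\alpha$-vertices, and occasionally a half-plane boundary vertex freshly revealed by a long step — are created only next to $v$. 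I would therefore write $N_k(r)=M_k+R_k$, where $M_k$ counts the \emph{active} vertices, those created while the current vertex $v$ is being processed and still sitting to its right, and $R_k$ counts the \emph{residual} vertices, everything else: vertices carried over from the processing of earlier vertices of $\partial B_r$ (including revealed half-plane vertices left behind to the left of the frontier).

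The active count is the easy part: each peeling step creates at most one vertex, and the processing of the current $v$ terminates exactly at the first $(L,\cdot)$-step after it begins — that being the only way the left endpoint $v$ of the peeled edge gets swallowed — which has probability $\sum_i p_i/2=(1-\alpha)/2\ge 1/6$ at each step. Hence, by the strong Markov property, $M_k$ is stochastically dominated by a geometric variable of parameter $(1-\alpha)/2$, so $\P(M_k>n)\le\bigl(\tfrac{1+\alpha}{2}\bigr)^{n}$ with $\tfrac{1+\alpha}{2}<\tfrac56$: exponentially small, uniformly in $r$, $k$ and $X_{\tau_r}$.

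For the residual count I would set up a recursion over the successive vertices $v^{(1)},v^{(2)},\dots$ of $\partial B_r$ that get processed (one "phase" each). Let $R_{(m)}$ be the residual count at the end of phase $m$; then $R_{(0)}=0$ (at time $\tau_r$ the boundary of $T_{\tau_r}$ is $\partial B_r$ together with the half-plane boundary, and has no extra vertex), and since the residual pile is untouched during a phase, $R_k=R_{(m^{*})}$ for the index $m^{*}$ of the last phase completed before $\tau_r+k$; so it suffices to bound $R_{(m)}$ uniformly in $m$. Tracking who is swallowed, the $(L,j_m)$-step that ends phase $m$ removes $v^{(m)}$ and the $j_m-1$ boundary vertices immediately to its left — i.e.\ the top $\min(j_m-1,R_{(m-1)})$ of the residual pile — reveals at most one new half-plane vertex, and deposits onto the residual pile the $A_m$ active vertices that survived the phase, whence
\[
 R_{(m)}\ \le\ \max\bigl(R_{(m-1)}-(j_m-1),\,0\bigr)\ +\ A_m+1 .
\]
Here the pairs $(j_m,A_m)$ are i.i.d.\ across phases; within a phase $j_m$ (the parameter of the terminating $(L,\cdot)$-step) is independent of $A_m$ (a function of the earlier steps of the phase); $A_m+1\ge1$ has an exponential tail by the previous paragraph; and $\P(j_m=i)=p_i/(1-\alpha)$, so by \cref{eq:constant1} the down-step $j_m-1$ has a tail of order $i^{-1/2}$, in particular infinite mean. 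This is a Lindley-type (queueing) recursion, and $R_{(m)}$ is at most $A_m+1$ plus the all-time running maximum of the random walk with i.i.d.\ increments $(A_i+1)-(j_{i+1}-1)$ — a walk with drift $-\infty$ and an exponentially light positive part. The crucial point is that as $b\downarrow0$ one has $\E\bigl(e^{-b(j_m-1)}\bigr)=1-c\sqrt b+o(\sqrt b)$ for some $c>0$ — this is where the $i^{-3/2}$ tail of $p_i$ enters — whereas $\E\bigl(e^{b(A_m+1)}\bigr)=1+O(b)$; since $\sqrt b\gg b$, the product is $<1$ for $b$ small, i.e.\ the walk has a strictly positive Lundberg exponent, and the usual change-of-measure / supermartingale estimate then gives $\P\bigl(R_{(m)}>n\bigr)\le Ce^{-bn}$ uniformly in $m$, hence in $r$, $k$ and $X_{\tau_r}$.

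Combining, $\P\bigl(N_k(r)>n\mid X_{\tau_r}\bigr)\le\P(M_k>n/2)+\P(R_k>n/2)\le C'e^{-Bn}$, and for $n\ge1$ the constant is absorbed into the exponent by slightly shrinking $B$. The part that needs care is the bookkeeping: checking step by step which boundary vertices are created and which are swallowed, so that the recursion genuinely comes out with the heavy $i^{-3/2}$ tail driving the down-step, and then making sure the exponential bound survives uniformly over the random number of phases and over times $k$ that are not phase boundaries (the latter by a routine renewal estimate, since the length of a single phase has an exponential tail even when selected as the last one completed by time $k$). The heavy tail here is essential rather than cosmetic: a drift-only heuristic would give $R_{(m)}$ only a polynomial tail, and it is the infinite mean of the $(L,i)$-step parameter — the $\sqrt b$-versus-$b$ mismatch in the moment generating functions — that upgrades this to a genuine exponential decay.
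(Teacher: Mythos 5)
Most of your bookkeeping is correct and is a genuinely different route from the paper's: the phase for the current vertex of $\partial B_r$ does end exactly at the first $(L,\cdot)$-step, the number of vertices created in the running phase is dominated by a geometric (and, looking backwards from the fixed time $\tau_r+k$, this domination is clean and uniform), the residual pile is contiguous immediately to the left of the current vertex and is touched only by terminating $(L,j)$-steps, so your Lindley-type recursion $R_{(m)}\le\max(R_{(m-1)}-(j_m-1),0)+A_m+1$ holds, and the Lundberg computation $\E e^{-b(j_m-1)}=1-c\sqrt b+o(\sqrt b)$ versus $\E e^{b(A_m+1)}=1+O(b)$ correctly extracts the role of the heavy tail of $p_i$ and gives $\P(R_{(m)}>n)\le Ce^{-bn}$ for each \emph{fixed} $m$, uniformly in $m$, $r$ and $X_{\tau_r}$.

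The genuine gap is the last step, which you dismiss as ``a routine renewal estimate'': at the fixed peeling time $k$ you must evaluate the backlog at the \emph{random} index $m^{*}$ of the last completed phase, i.e.\ bound $\P(R_{(m^{*})}>n)$ uniformly in $k$. The event $\{m^{*}=m\}$ is determined by the phase lengths, and the phase lengths are positively correlated with the up-steps $A_i+1$ that drive the backlog, so you cannot simply substitute the fixed-$m$ bound; a union bound over the at most $k$ candidate indices gives $k\,e^{-bn}$, which is not uniform in $k$, and no refinement of the union bound can work because $\max_{m\le k}R_{(m)}$ genuinely grows (logarithmically) in $k$ -- so the argument must really use that $m^{*}$ is the last completed phase, not just that $m^{*}\le k$. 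The renewal fact you invoke controls only the length of the selected phase, i.e.\ the term $U_{m^{*}}=A_{m^{*}}+1$, not the accumulated term $T_{m^{*}}=\max\bigl(0,\max_{l}\sum_{i=l}^{m^{*}-1}(U_i-(j_{i+1}-1))\bigr)$, which is where the correlation with $\{m^{*}=m\}$ sits. This can be repaired (e.g.\ condition on the phase-length sequence, bound $A_i+1\le \ell_i+1$, use that the $j_i$ are i.i.d.\ and independent of the $\ell_i$, and combine a lower-deviation estimate for sums of the heavy-tailed down-steps with control of unusually long phases shortly before time $k$), but that is a real additional argument, not a routine one. It is worth noting that the paper avoids the issue entirely by working in step-time rather than phase-time: $N_k(r)$ increases by at most $1$ per step and, once it exceeds a fixed threshold $n_0$ (which exists because $\sum_i ip_i=\infty$, by \eqref{eq:constant1}), its conditional drift is at most $-\varepsilon$; hence $\{N_k(r)>n\}$ forces the process to stay above $n_0$ for the final $n-n_0$ steps, an event of probability at most $\sum_j\gamma^{k-j}\le Ce^{-Bn}$, and this ``last visit to $n_0$'' bookkeeping yields the uniformity in $k$ and in $X_{\tau_r}$ for free.
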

\begin{proof}
First fix an $n_0$ large enough such that
\begin{equation}
 \alpha - \frac{1}{2}\sum_{i=1}^{\lfloor n_0/2 \rfloor}ip_i < -\varepsilon\nonumber
 \end{equation}
 for some $\varepsilon > 0$ where $p_i$ is given by \cref{eq: sub_pi} (observe that such a choice of $n_0$ exists due to the heavy tail of $p_i$.) The above choice of $n_0$ depends only on $\alpha$.
Now choose an integer $n>n_0$.
Let $\Delta N_k(r) : = N_{k+1}(r) - N_k(r)$ for $ k \ge 1$.
Observe that $N_k(r)$ increases by at most $1$ in any step because of the evolution of $X_k$ and $N_0(r) = 0$. This has several implications. Firstly, this implies that it is enough to consider $k>n$ or otherwise the requested probability is $0$. Secondly, if $N_k(r) > n$, then for some integer $1 \le j \le k$, $N_j(r)$ is equal to $n_0$. Let $M = \max\{1 \le j \le k: N_j(r) = n_0\}$. Finally, we must have $M\le k-n+n_0$. Now note that 
\begin{equation}
\P(N_k(r) > n,M=j)  < \P(N_i(r) \ge n_0 \text{ for all } j\le i \le k)\label{eq:step1}
\end{equation}
Now for any $i > j$, conditioned on $N_i(r) \ge n_0$, there are at least $n_0/2$ vertices of the internal boundary of $P_{\tau_r+k}$ which do not belong to $\partial B_r$ either to the left or right of the edge we perform the $(i+1)$th peeling step because of the way the exploration process evolves. 
Hence it is clear that conditioned on $N_i(r) \ge  n_0$, $\Delta(N_i(r))$ is dominated by a variable $D$ with $\E(D) < -\varepsilon$ because of the choice of $n_0$. Thus, 
\begin{equation}
\P( N_i(r) \ge  n_0 \text{ for all } j\le i \le k)<\P\left(\sum_{i=1}^{k-j}D_i > 0\right) < \gamma^{k-j}\label{eq:step2}
\end{equation}
for some $0<\gamma <1$ depending only on $n_0$ where $\{D_i\}_{i\ge 1}$ are i.i.d. copies of $D$ and the last inequality of \eqref{eq:step2} follows from suitable large deviations estimate. Now using \eqref{eq:step1} and \eqref{eq:step2},
\begin{align}
\P(N_k(r) > n | X_{\tau_r}) =\sum_{j=1}^{k-n+n_0}\P(N_k(r) > n,M=j) < \sum_{j=1}^{k-n+n_0}\gamma^{k-j} < \exp(-Bn)
\end{align}
for some $B>0$ for large enough $n$. Decrease $B$ suitably so that the requested bound is true even for smaller values of $n$.
\end{proof}

We remarked before that \cref{lem:time_estimate} is true for any value of $\alpha$. We shall now use this fact and induction to prove \cref{thm: boundary_ball_tight}.
\begin{proof}[Proof of \cref{thm: boundary_ball_tight}]
First, get hold of the constants $A>1,A'>0,B>0$ such that \cref{lem:time_estimate}, part (i) and \cref{lem:step} are true for $n\ge 1$. Fix a $C$ such that $0<C<B$. Then choose a large $N$ to ensure that for all $n>N$,
$$\max\{\exp(-A'n^2),An^2\exp(-Bn), \exp(-Cn^2)\}<\frac13\exp(-Cn).$$
We shall prove that for all $n>N$, the Theorem is true for the above choice of $C>0$ by induction on $r$. Note that for $r=0$, the Theorem is true trivially since $X_{\tau_0} = X_0 = 1$. Now assume, the Theorem is true for $r'=r-1$ for any $n>N$ for above choice of $C,N$. Now recall the notation $N_j$ from \cref{lem:step} and observe that $N_j = X_j$ for $j \ge \Delta \tau_r$. Clearly, for $n>N$, using \cref{lem:step}
\begin{equation}
\P(X_{\tau_r} >n, \Delta \tau_{r-1} = j | X_{\tau_{r-1}}) < \P(N_j > n | X_{\tau_{r-1}}) < \exp(-Bn)\label{eq:bball1}
\end{equation}
Now for any choice of $n>N$, using \eqref{eq:bball1},
\begin{align}
\P(X_{\tau_r} > n) & < \P(X_{\tau_{r-1}} > n^2) + \P(\Delta \tau_{r-1} > An^2|  X_{\tau_{r-1}} \le n^2) +\sum_{j=1}^{An^2}\exp(-Bn)\nonumber\\
& <\exp(-Cn^2) + \exp(-A'n^2) + An^2\exp(-Bn)\label{eq:bball2}\\
& < \exp(-Cn) \label{eq:bball3}
\end{align}
where \eqref{eq:bball2} follows from induction step, \cref{lem:step,lem:time_estimate}. Also, \eqref{eq:bball3} follows from the choice of $N$. The proof is completed by induction.
\end{proof}

\end{subsubsection}
\begin{subsubsection}{Hull Volumes}\label{sec: hull_volume}

First, we wish to estimate the growth rate of $\tau_r$. Note that conditioned on $X_{\tau_r}$ the distribution of $\Delta(\tau_r)$ depends only on $X_{\tau_r}$ and not $r$. It is easy to see that $Z_r := (X_{\tau_r}, \Delta \tau_r)$ is an irreducible aperiodic Markov chain. Using \cref{thm: boundary_ball_tight,lem:time_estimate} it is not difficult to see that the sequence $\{Z_r\}_{r\ge 1}$ forms a tight sequence. Hence, $Z_r$ has a stationary probability distribution. Let us denote the marginal of the second coordinate of this stationary distribution by $\pi$. It is also easy to see using \cref{thm: boundary_ball_tight,lem:time_estimate} that $\pi$ has exponential tail and hence finite expectation. If we start the Markov chain $\{Z_r\}_{r\ge 1}$ from stationarity, ergodic theorem gives us that $\tau_r/r$ converges almost surely to $\sum_{i\ge 0}i\pi(i)$. However if we start the Markov chain $\{Z_r\}_{r\ge 1}$ from any fixed number, the resulting measure is absolutely continuous with respect to the corresponding chain starting from stationarity. This argument proves

\begin{lem}\label{lem:time_convergence}
Almost surely,
\[
 \frac{\tau_r}{r} \rightarrow \sum_{i\ge 0}i\pi(i)
\]

\end{lem}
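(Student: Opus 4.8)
The plan is to establish that $\{Z_r\}_{r \ge 1}$, where $Z_r := (X_{\tau_r}, \Delta\tau_r)$, is a positive recurrent Markov chain and then apply the ergodic theorem. First I would verify that $Z_r$ is indeed a Markov chain: conditioned on $X_{\tau_r}$, the evolution of the peeling algorithm from step $\tau_r$ onward is independent of everything revealed before (by the domain Markov property), so the pair $(X_{\tau_{r+1}}, \Delta\tau_{r+1})$ depends on the past only through $X_{\tau_r}$, which in turn is the first coordinate of $Z_r$; irreducibility and aperiodicity follow as noted for $X_{\tau_r}$ in \cref{prop:algorithm} together with the fact that $\Delta\tau_r$ can take any sufficiently large value. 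Next I would show tightness of $\{Z_r\}$: the first coordinate $X_{\tau_r}$ has uniformly exponential tails by \cref{thm: boundary_ball_tight}, and conditionally on $X_{\tau_r} = n$, \cref{lem:time_estimate}(i) gives $\P(\Delta\tau_r > An \mid X_{\tau_r} = n) < \exp(-A'n)$, so integrating against the exponential tail of $X_{\tau_r}$ shows $\Delta\tau_r$ also has exponential tails uniformly in $r$. A tight sequence of distributions for an irreducible aperiodic Markov chain on a countable state space forces positive recurrence, hence the existence of a (unique) stationary distribution; call $\pi$ the marginal law of the second coordinate $\Delta\tau_r$ under stationarity. The same uniform exponential tail bound, now applied to the stationary distribution (which is a mixture of the conditional laws over the stationary law of $X_{\tau_r}$, itself exponentially tailed), shows $\pi$ has exponential tails, so $\sum_{i \ge 0} i\,\pi(i) < \infty$.

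With positive recurrence and the integrability of $\Delta\tau_r$ under $\pi$ in hand, I would invoke the ergodic theorem for positive recurrent Markov chains: if $\{Z_r\}$ is started from $\pi$, then $\frac{1}{r}\sum_{j=1}^{r} \Delta\tau_j \to \mathbb{E}_\pi[\Delta\tau_1] = \sum_{i \ge 0} i\,\pi(i)$ almost surely. Since $\tau_r = \sum_{j=1}^r \Delta\tau_{j-1}$ (telescoping, with $\tau_0 = 0$), this is exactly $\tau_r / r \to \sum_{i\ge 0} i\,\pi(i)$. Finally, to remove the assumption that the chain starts from stationarity, I would use the standard absolute-continuity argument sketched in the excerpt: for an irreducible recurrent chain, the law of the trajectory started from any fixed state is absolutely continuous with respect to the law of the trajectory started from $\pi$ (since from any state the chain reaches any other state, so one can decompose the stationary-started path at the first hitting time of the initial state and compare), and an almost-sure event under the stationary-started chain therefore remains almost sure under the fixed-start chain. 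This yields the claim for the actual peeling process, which starts from $X_{\tau_0} = 1$.

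The main obstacle I anticipate is making the tightness/positive-recurrence step fully rigorous rather than "not difficult to see": one needs the uniform-in-$r$ exponential tail of $X_{\tau_r}$ from \cref{thm: boundary_ball_tight} to be genuinely uniform (which it is, as the constant $c$ there depends only on $\alpha$), and one needs to correctly combine it with \cref{lem:time_estimate}, being careful that \cref{lem:time_estimate}(i) is stated conditionally on $|X_{\tau_r}| = n$ so that the unconditional tail of $\Delta\tau_r$ is controlled by $\sum_n \H_\alpha(|X_{\tau_r}| = n)\bigl(\exp(-A'n) + \mathbb{1}_{k \le An}\bigr)$, which decays exponentially in $k$ uniformly in $r$; monotonicity from \cref{lem:time_estimate}(ii) is convenient here for cleanly bounding the conditional tails. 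A secondary subtlety is the absolute-continuity argument: it is genuinely correct for recurrent chains but deserves a one-line justification via the regeneration structure at return times to the initial state.
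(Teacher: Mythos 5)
Your proposal is essentially the paper's own argument: the paper likewise treats $Z_r=(X_{\tau_r},\Delta\tau_r)$ as an irreducible aperiodic Markov chain, deduces tightness (hence a stationary law with exponentially tailed marginal $\pi$) from \cref{thm: boundary_ball_tight} and \cref{lem:time_estimate}, applies the ergodic theorem under the stationary start, and transfers the almost sure limit to the actual starting state by absolute continuity. You merely fill in details the paper leaves as ``easy to see,'' so the two proofs coincide in approach and substance.
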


Recall the notation $Y,Z,\{S_n\}_{n\ge 1}$ from \cref{sec:supercritical}. Recall that the volume of the triangulation revealed at the $n$-th step of peeling is given by $V_n = S_n+X_n$. We wish to estimate $V_{\tau_r} = |B_r|$. Recall that $S_n$ is a sum of $n$ i.i.d. copies of $W$ where $W =Y+I_{Y+1}$. From \cref{lem:tail} part (i), we conclude $\P(W>x) \sim c_\alpha x^{-1/2}$ as $x \to \infty$ for the constant $c_\alpha$ given by \eqref{eq:const}.

%
%
%
%


\begin{lem}\label{lem:convergence_nthstep}
For some sequence of real numbers $a_n$ and $b_n$
 \begin{equation}
  \frac{V_n - b_n}{a_n} \Rightarrow S
 \end{equation}
where $S$ follows a stable distribution of type $1/2$. Also  $a_n \sim c_\alpha^2n^2$ and $b_n \sim c_\alpha^2 n^2$ where $c_\alpha$ is given by \eqref{eq:const}.
\end{lem}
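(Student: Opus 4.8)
\textbf{Proof proposal for Lemma~\ref{lem:convergence_nthstep}.}
The plan is to apply the stable limit theorem (\cref{thm: stable}) to the i.i.d. sum $S_n = \sum_{i=1}^n W_i$, where $W_i$ are copies of $W = Y + I_{Y+1}$, and then transfer the conclusion from $S_n$ to $V_n = S_n + X_n$ by absorbing the lower-order term $X_n$ into the centering sequence. First I would record, from \cref{lem:tail}(i), that $\P(W > x) \sim c_\alpha x^{-1/2}$ as $x \to \infty$; since $W \ge 0$, this gives $\P(|W| > x)/\P(W>x) \to 1$, so hypotheses (1) (with $\theta = 1$) and (2) (with exponent $\alpha = 1/2$ and $L(x) \to c_\alpha$ slowly varying) of \cref{thm: stable} hold. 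The theorem then yields $(S_n - b_n')/a_n' \Rightarrow S$ for a stable law $S$ of type $1/2$, where $a_n' = \inf\{x : \P(|W|>x) \le n^{-1}\}$ and $b_n' = n\,\E(W \mathbbm{1}_{|W| \le a_n'})$.

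Next I would compute the asymptotics of $a_n'$ and $b_n'$. From $\P(W>x) \sim c_\alpha x^{-1/2}$, solving $c_\alpha x^{-1/2} = n^{-1}$ gives $a_n' \sim c_\alpha^2 n^2$. For $b_n'$, the truncated first moment satisfies $\E(W\mathbbm{1}_{\{W < x\}}) \sim c_\alpha \sqrt{x}$ by \cref{lem:tail}(ii) (this is exactly why part (ii) of that lemma is stated); substituting $x = a_n' \sim c_\alpha^2 n^2$ gives $\E(W\mathbbm{1}_{\{W \le a_n'\}}) \sim c_\alpha \cdot c_\alpha n = c_\alpha^2 n$, hence $b_n' \sim c_\alpha^2 n^2$. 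So with $a_n := a_n'$ and an appropriate choice of centering we already have $a_n \sim c_\alpha^2 n^2$ and a centering $\sim c_\alpha^2 n^2$.

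The remaining point is to pass from $S_n$ to $V_n = S_n + X_n$. Here $X_n$ is the boundary length after $n$ peeling steps; since $\Delta X_n \le 1$ we have $X_n \le n + 2$ deterministically, so $X_n = O(n) = o(a_n) = o(n^2)$. Therefore $(V_n - b_n')/a_n' = (S_n - b_n')/a_n' + X_n/a_n'$, and the second term tends to $0$ (deterministically, hence in probability), so by Slutsky $(V_n - b_n')/a_n' \Rightarrow S$ as well; setting $b_n := b_n'$ completes the statement, and one still has $b_n \sim c_\alpha^2 n^2$. I do not expect a serious obstacle: the only mildly delicate step is verifying that the slowly varying corrections in $a_n'$ and $b_n'$ are genuinely lower order (so that the stated asymptotics $a_n \sim c_\alpha^2 n^2$, $b_n \sim c_\alpha^2 n^2$ hold with the sharp constant), which follows directly from the $\sim$-asymptotics in \cref{lem:tail} rather than merely $\Theta$-bounds; and one should note that the type-$1/2$ stable law here is one-sided (the L\'evy distribution up to scaling), consistent with $\theta = 1$.
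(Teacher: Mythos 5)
Your proposal is correct and follows essentially the same route as the paper: apply the stable limit theorem (\cref{thm: stable}) to the i.i.d.\ sum $S_n$ using the tail estimate $\P(W>x)\sim c_\alpha x^{-1/2}$ from \cref{lem:tail}(i), read off $a_n\sim c_\alpha^2 n^2$ from that tail and $b_n\sim c_\alpha^2 n^2$ from the truncated-mean asymptotics in \cref{lem:tail}(ii), and absorb $X_n=O(n)=o(n^2)$ to pass from $S_n$ to $V_n$. The only difference is that you spell out the verification of the hypotheses of \cref{thm: stable} and the substitution $x=a_n$ more explicitly than the paper does, which is fine.
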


\begin{proof}
Note that since $X_n \le n$ and since $V_n =S_n+X_n$, it is enough to prove the result with $V_n$ replaced by $S_n$. Since $S_n$ is a sum of an i.i.d. sequence distributed as $W$, we apply \cref{thm: stable}. Recall from \cref{thm: stable}, the centering sequence \mbox{$a_n = \inf\{t: \P(W > t) \le 1/n\}$}. Recall that we also obtained the tail estimate of $W$, \mbox{$\P(W>x) \sim c_\alpha x^{-1/2}$}. It is easy to see from this tail estimate of $W$ that $a_n \sim c_\alpha^2n^2 $.
The asymptotics of $b_n$ is provided in \cref{lem:tail} part (ii).
\end{proof}

We need one final lemma before we prove \cref{thm:hull_convergence}. Recall the distribution $\pi$ from \cref{lem:time_convergence}. 
\begin{lem}\label{lem:finite_hull}
$V_{\tau_{r}}/V_{\lfloor \gamma r \rfloor}$ converges 
in probability to $1$ where $\gamma= \sum_{i\ge0}i\pi(i)$.
\end{lem}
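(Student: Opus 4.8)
The plan is to show that the random time $\tau_r$ is so close to its deterministic approximation $\gamma r$ that the cumulative volumes $V_{\tau_r}$ and $V_{\lfloor \gamma r\rfloor}$ differ by a negligible amount relative to their size $\asymp r^2$. First I would recall from \cref{lem:time_convergence} that $\tau_r/r \to \gamma$ almost surely, so in particular $|\tau_r - \gamma r| = o(r)$ almost surely; more precisely, for any $\delta>0$ we have $\tau_r \in [(\gamma-\delta)r, (\gamma+\delta)r]$ for all large $r$. Thus, writing $n^- = \lfloor(\gamma-\delta)r\rfloor$ and $n^+ = \lceil(\gamma+\delta)r\rceil$, and using that $V_n$ is non-decreasing in $n$ (the peeled region only grows), we get
\[
\frac{V_{n^-}}{V_{\lfloor \gamma r\rfloor}} \le \frac{V_{\tau_r}}{V_{\lfloor \gamma r\rfloor}} \le \frac{V_{n^+}}{V_{\lfloor \gamma r\rfloor}}
\]
eventually almost surely. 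So it suffices to control the ratios $V_{n^\pm}/V_{\lfloor \gamma r\rfloor}$.

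The key input for that is \cref{lem:convergence_nthstep}, which gives $V_n = b_n + a_n S_n'$ with $b_n \sim c_\alpha^2 n^2$, $a_n \sim c_\alpha^2 n^2$, and $S_n' \Rightarrow S$ a type-$1/2$ stable law. Since $b_n/b_{\lfloor\gamma r\rfloor} \to (\gamma\pm\delta)^2/\gamma^2$ and $a_n/b_{\lfloor\gamma r\rfloor}$ stays bounded (both ratios being $\asymp 1$) while $S_{n^\pm}'$ is tight, we obtain that $V_{n^\pm}/V_{\lfloor\gamma r\rfloor}$ is a tight sequence whose "deterministic part" converges to $(1\pm\delta/\gamma)^2$. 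Letting $\delta \to 0$ after taking $r\to\infty$ pins the ratio at $1$. To make the tightness argument clean I would divide numerator and denominator by $b_{\lfloor\gamma r\rfloor}$: then $V_{\lfloor\gamma r\rfloor}/b_{\lfloor\gamma r\rfloor} = 1 + (a_{\lfloor\gamma r\rfloor}/b_{\lfloor\gamma r\rfloor})S_{\lfloor\gamma r\rfloor}' \to 1 + S$ in distribution (since $a_n/b_n \to 1$), and this limit is almost surely positive because $S>0$ a.s. for the L\'evy distribution. Similarly $V_{n^\pm}/b_{\lfloor\gamma r\rfloor} \Rightarrow (\gamma\pm\delta)^2/\gamma^2 + S$. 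A standard argument (e.g.\ via the continuous mapping theorem on the pair, or via sandwiching and Slutsky) then shows $V_{\tau_r}/V_{\lfloor\gamma r\rfloor} \Rightarrow $ a random variable lying between $((\gamma-\delta)^2/\gamma^2 + S)/(1+S)$ and $((\gamma+\delta)^2/\gamma^2 + S)/(1+S)$; sending $\delta \downarrow 0$ forces the limit to be the constant $1$, and convergence in distribution to a constant is convergence in probability.

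I expect the main obstacle to be bookkeeping rather than anything deep: one must be slightly careful that the almost-sure bracketing $\tau_r \in [n^-, n^+]$ and the distributional convergence of $V_{n^\pm}/b_{\lfloor\gamma r\rfloor}$ are combined correctly, since the bracketing holds only eventually and depends on $\delta$. The cleanest route is to fix $\varepsilon>0$, choose $\delta$ small enough that $(\gamma+\delta)^2/\gamma^2 < 1+\varepsilon$ and $(\gamma-\delta)^2/\gamma^2 > 1-\varepsilon$, use the a.s.\ bracketing to get $\P(|V_{\tau_r}/V_{\lfloor\gamma r\rfloor} - 1| > 2\varepsilon) \le \P(V_{n^+}/V_{\lfloor\gamma r\rfloor} > 1+2\varepsilon) + \P(V_{n^-}/V_{\lfloor\gamma r\rfloor} < 1-2\varepsilon) + o(1)$, and then bound each of these two probabilities using the joint weak limit of $(V_{n^\pm}/b_{\lfloor\gamma r\rfloor},\, V_{\lfloor\gamma r\rfloor}/b_{\lfloor\gamma r\rfloor})$ together with the fact that the same underlying stable variable $S$ appears (approximately) in both coordinates, so the ratio concentrates near $(\gamma\pm\delta)^2/\gamma^2$ up to the $S$-fluctuation, which is swamped. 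One subtlety worth flagging: $S_{n^+}'$, $S_{n^-}'$ and $S_{\lfloor\gamma r\rfloor}'$ are built from overlapping partial sums, so they are highly correlated — this is a help, not a hindrance, and I would make it explicit by writing $S_{n^+} = S_{\lfloor\gamma r\rfloor} + (\text{sum of } O(\delta r) \text{ extra i.i.d. terms})$ and noting the extra block contributes $O(a_{\delta r}) = O(\delta^2 r^2) = O(\delta^2)\cdot b_{\lfloor\gamma r\rfloor}$ in the relevant scaling, hence is negligible as $\delta\to0$.
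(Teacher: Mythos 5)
Your proposal follows essentially the same route as the paper's proof: both use the almost sure convergence $\tau_r/(\gamma r)\to 1$ from \cref{lem:time_convergence} to bracket $\tau_r$, monotonicity of the revealed volume to reduce to the increment of the i.i.d.\ sum over a window of $O(\delta r)$ peeling steps (which is of order $\delta^2 r^2$ in probability), and the tightness of $\{r^2/S_r\}$ coming from \cref{lem:convergence_nthstep} and absolute continuity of the stable limit to bound the denominator below at scale $r^2$. One small correction to your justification: the centered limit $S$ of $(V_n-b_n)/a_n$ is not itself a.s.\ positive; what is true (and what you actually need, since $a_n\sim b_n$) is that $1+S$, the distributional limit of $V_n/b_n$, is a.s.\ positive, which is exactly the no-atom-at-zero fact the paper invokes.
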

\begin{proof}
 Observe that it is enough to prove $S_{\tau_r}/S_{ \lfloor \gamma r \rfloor}$ converges to $1$ in probability. Notice that since $S_r$ is nondecreasing in $r$,  for any $\eta>0$ and $\varepsilon >0$, we have
\begin{multline}
\P(|S_{\tau_r}/S_{\lfloor \gamma r \rfloor} -1| > \eta , (1-\varepsilon)\gamma r<\tau_r< (1+\varepsilon) \gamma r) \\<
\P\left(\frac{S_{\lfloor (1+\varepsilon)\gamma r \rfloor} - S_{\lfloor (1-\varepsilon)\gamma r \rfloor} }{S_{\lfloor \gamma r \rfloor}} > \eta, (1-\varepsilon)\gamma r<\tau_r< (1+\varepsilon) \gamma r\right)\label{eq:finite_hull1}
\end{multline}
 Recall that a stable law is absolutely continuous (see \cite{fellerII}, Chapter VI.1, Lemma 1) and hence via \cref{lem:convergence_nthstep} we can conclude both $\{S_r/r^2\}_{r \ge 1}$ and $\{r^2/S_r\}_{r \ge 1}$ form a tight sequence in $r$. Further notice that $\tau_r /\gamma r \to 1$ almost surely via \cref{lem:time_convergence}. Combining all these pieces, it is easy to see that for any $\eta>0$, there exists an $\varepsilon>0$ such that the right hand side of \cref{eq:finite_hull1} can be made smaller than any prescribed $\delta>0$ for large enough $r$. The details are left to the reader.
\end{proof}

\begin{proof}[Proof of Theorem \ref{thm:hull_convergence}]
Notice $V_{\tau_r} = |{B_r}|$. Also observe
\begin{equation}
 \frac{V_{\tau_r} - b_{\lfloor \gamma r \rfloor}}{a_{\lfloor \gamma r \rfloor}} = \frac{V_{\lfloor \gamma r \rfloor} - b_{\lfloor \gamma r \rfloor}}{a_{\lfloor \gamma r \rfloor}} + \frac{V_{\lfloor \gamma r \rfloor}}{a_{\lfloor \gamma r \rfloor}}\left( \frac{V_{\tau_r}}{V_{\lfloor \gamma r \rfloor}} -1\right)\label{eq:hull1}
\end{equation}
The first term of \cref{eq:hull1} converges to a stable random variable of type $1/2$ via \cref{lem:convergence_nthstep}. The second term in \cref{eq:hull1} converges to $0$ in probability via \cref{lem:finite_hull}. The proof follows combining these two facts.
\end{proof}

\end{subsubsection}
 
\section{Percolation} \label{sec:percolation1}
  In this Section, we prove \cref{thm:site,thm:density,thm:ends}.
We will use the peeling procedure and use the notations $P_n,T_n$ introduced in \cref{sec:peeling0}. Along with revealing the face on the edge we peel, we might also reveal the color of the new vertex (if any) revealed.
 It will be useful to consider several boundary conditions, which
specifies the colors of the boundary vertices. If we consider a percolation configuration on the whole graph including the boundary vertices, we say it is a random i.i.d. boundary condition.
\begin{description}
 \item  \textbf{Algorithm:} We start with the root vertex black and every other vertex on the boundary white. At the $n+1$th step, we perform a peeling step at the edge on the boundary of $T_n$ with a black vertex to the right and a white vertex to the left. We stop at the $n$th step if there is no
black vertex left on the boundary of $T_n$.
\end{description}

%

Notice that until we stop in the above algorithm, the boundary condition on $T_n$ remains the same as the initial one. A simple topological argument shows that the event that the above
algorithm stops is the same as the event that the black cluster
containing the root vertex
is finite. Now consider the following variable $B$. If the peeling step is an $\alpha$-step and a black vertex is revealed set $B=1$. If the peeling step is of the form $(R,i)$, set $B= -i$. Otherwise set $B=0$. The following Lemma is a computation which essentially follows from \cref{lem:expected_change}.

\begin{lem}\label{lem:computation}
  Suppose $\alpha \in (2/3,1)$
\begin{equation}
\E_p (B) = \alpha p -\frac{1}{2}(\alpha-\sqrt{\alpha}\sqrt{3\alpha-2})
\end{equation}
In particular, $\E_p(B)>0$ if and only if $p > 1/2 (1-\sqrt{3-2/\alpha})$.
\end{lem}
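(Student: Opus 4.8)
The plan is to compute $\E_p(B)$ directly by summing over the three types of peeling steps, weighting each by its probability (which now involves both the map randomness and the independent coloring). First I would condition on the type of peeling step. An $\alpha$-step occurs with probability $\alpha$, and independently the newly revealed internal vertex is black with probability $p$; this contributes $+1$ to $B$ only when the vertex is black, so the expected contribution is $\alpha p$. A step of the form $(R,i)$ occurs with probability $p_i$ (using translation invariance and the symmetry of $(L,i)$ and $(R,i)$ established in \cite{AR13}), and contributes $-i$; so its expected contribution is $-\sum_{i\ge 1} i\,p_i$. A step of the form $(L,i)$ or an $\alpha$-step with a white vertex contributes $0$. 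Hence
\[
\E_p(B) = \alpha p - \sum_{i\ge 1} i\, p_i .
\]

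Next I would identify $\sum_{i\ge 1} i\, p_i$ using the computation already done in the proof of \cref{lem:expected_change}. There it is shown that $\E(\Delta \tilde X_n) = \alpha - \sum_{i\ge 1} i\, p_i = \sqrt{\alpha}\sqrt{3\alpha-2}$, so that $\sum_{i\ge 1} i\, p_i = \alpha - \sqrt{\alpha}\sqrt{3\alpha-2}$. Substituting,
\[
\E_p(B) = \alpha p - \alpha + \sqrt{\alpha}\sqrt{3\alpha-2} = \alpha p - \tfrac12\bigl(\alpha - \sqrt{\alpha}\sqrt{3\alpha-2}\bigr) - \tfrac12\bigl(\alpha - \sqrt{\alpha}\sqrt{3\alpha-2}\bigr),
\]
which does not immediately match; so I would instead be careful about the exact bookkeeping of $(R,i)$ versus $(L,i)$ steps. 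The point is that in the percolation exploration we peel at the interface edge, and only $(R,i)$-type steps (those that swallow the black side) decrease $B$, whereas $(L,i)$ steps leave $B=0$; by the symmetry $p_{(L,i)} = p_{(R,i)} = p_i/2$ in this exploration convention, the expected decrease is $\tfrac12\sum_{i\ge1} i\, p_i = \tfrac12(\alpha - \sqrt{\alpha}\sqrt{3\alpha-2})$. This gives exactly
\[
\E_p(B) = \alpha p - \tfrac12\bigl(\alpha - \sqrt{\alpha}\sqrt{3\alpha-2}\bigr),
\]
as claimed. The final assertion follows by solving $\alpha p > \tfrac12(\alpha-\sqrt{\alpha}\sqrt{3\alpha-2})$ for $p$: dividing by $\alpha$ gives $p > \tfrac12\bigl(1 - \sqrt{(3\alpha-2)/\alpha}\bigr) = \tfrac12\bigl(1 - \sqrt{3 - 2/\alpha}\bigr)$.

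The main obstacle I anticipate is getting the combinatorial bookkeeping exactly right — in particular correctly accounting for which peeling events are $(R,i)$ versus $(L,i)$ relative to the black/white interface edge being peeled, and hence the factor of $\tfrac12$ in front of $\sum_i i\,p_i$. Once that is pinned down, the remainder is an algebraic substitution using the identity $\sum_{i\ge1}i\,p_i = \alpha - \sqrt{\alpha}\sqrt{3\alpha-2}$ already extracted in \cref{lem:expected_change}, together with the independence of the map's peeling step from the color of any newly revealed vertex under $P_p$.
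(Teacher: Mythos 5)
Your final computation is correct and is essentially the paper's own (unwritten) argument: the paper merely remarks that the lemma is a computation following from \cref{lem:expected_change}, and your bookkeeping --- $\E_p(B)=\alpha p-\sum_{i\ge1} i\,\P\big((R,i)\big)$ with $\P\big((R,i)\big)=p_i/2$ (since $p_i$ is the combined probability of $(L,i)$ and $(R,i)$, consistent with $\sum_i p_i=1-\alpha$) together with $\sum_i i\,p_i=\alpha-\sqrt{\alpha}\sqrt{3\alpha-2}$ --- is exactly right, as is the final inequality $p>\tfrac12\big(1-\sqrt{3-2/\alpha}\big)$. The false start in your write-up (momentarily taking $\P((R,i))=p_i$) is correctly diagnosed and repaired, so the end result is sound.
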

 The following proof is an imitation of the idea of \cite{AC13}. We add it for completeness. Recall the notation $T_n$ from \cref{sec:peeling0}. 

\begin{proof}[Proof of \cref{thm:site}( for $p_c$)]
Assume the following boundary condition: the root vertex is black and the rest of the vertices on the boundary are white. Apply the algorithm described above.\
 Start with $B_0=1$ and suppose $B_k$ is the number of black vertices left in the boundary of $T_k$. Clearly $B_{k+1}-B_k$ are i.i.d. with the same distribution as $B$ as long as $B_{k+1}\neq 0$. \cref{lem:computation} shows that $B_k$ eventually goes to $0$ almost surely if and only if $p \le \frac{1}{2}(\alpha-\sqrt{\alpha}\sqrt{3\alpha-2})$. Modifying the proof to a random i.i.d. boundary is an easy exercise of imitating Proposition 9 of \cite{AC13} and is left to the reader. The almost sure existence of a black cluster if $p > \frac{1}{2}(\alpha-\sqrt{\alpha}\sqrt{3\alpha-2})$ follows from ergodicity of the map with respect to translation of the root (see \cite{AR13}, Proposition 1.3). 
\end{proof}


\begin{corollary}\label{cor:p_uupper}
With random i.i.d. boundary condition, $\H_\alpha$-almost surely,
\begin{equation}
p_u \le 1/2(1+\sqrt{3-2/\alpha})
\end{equation}
\end{corollary}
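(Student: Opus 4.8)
The plan is to show that $p^{*}:=\tfrac12\bigl(1+\sqrt{3-2/\alpha}\bigr)$ satisfies $p_u\le p^{*}$ by proving that for every fixed $p>p^{*}$ there is, $\H_\alpha$-almost surely, a $P_p$-almost surely \emph{unique} infinite black cluster; taking $p$ decreasing to $p^{*}$ along a countable sequence then gives the claim. The starting observation is the identity $p^{*}=1-p_c$, where $p_c=\tfrac12\bigl(1-\sqrt{3-2/\alpha}\bigr)$ is the critical value computed in \cref{thm:site}, together with $p^{*}-p_c=\sqrt{3-2/\alpha}>0$ for $\alpha\in(2/3,1)$, so that $p_c<p^{*}$.

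Fix $p>p^{*}$. First I would invoke the colour symmetry of i.i.d.\ site percolation with i.i.d.\ boundary: exchanging the roles of black and white turns a configuration of parameter $p$ into one of parameter $1-p$. Since $1-p<1-p^{*}=p_c$, \cref{thm:site} applied with the colours interchanged yields that $\H_\alpha$-almost surely there is no infinite \emph{white} cluster $P_p$-almost surely. On the other hand $p>p^{*}>p_c$, so by \cref{thm:site} there is at least one infinite black cluster $P_p$-almost surely. Thus it suffices to establish the deterministic, purely topological fact that in a one-ended planar triangulation the absence of an infinite white cluster forces the infinite black cluster to be unique.

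For that I would argue via interfaces, in the spirit of \cite{AC13,UIPT3}. Because every face is a triangle, each edge on a black/white interface is shared by two triangles, each of which creates an adjacency between two same-coloured vertices lying on opposite sides of that interface; tracing the interface one sees that the white vertices met along it are pairwise joined through white vertices, so an infinite self-avoiding interface produces an infinite white cluster (and, on its other side, an infinite black cluster). Hence, under our hypothesis, every interface is either a finite cycle or a finite simple path with both endpoints on the boundary line $\R\times\{0\}$. Consequently any infinite black cluster must meet the boundary (otherwise it would be enclosed by a finite interface cycle bounding a finite region), and if two distinct infinite black clusters met the boundary at vertices $u$ and $v$ with a white boundary vertex in between, the finite interface emanating from the corresponding boundary edge would enclose a finite region separating $u$ from $v$, contradicting the infiniteness of both clusters. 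So there is exactly one infinite black cluster, whence $p\ge p_u$; letting $p\downarrow p^{*}$ along a countable sequence gives $p_u\le p^{*}$ almost surely.

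The step I expect to be the main obstacle is the last, topological separation argument: one must orient the interfaces consistently and verify that the finite interface issuing from a boundary edge between a black and a white boundary segment returns to the boundary on the correct side, so as to genuinely separate one infinite black cluster from the other. A cleaner alternative, which I would fall back on if the hand argument becomes cumbersome, is to replace it by a peeling exploration of a black/white interface started from the boundary, exactly as in the proof of \cref{thm:site} for $p_c$ but with the colours swapped: when $1-p<p_c$ the analogue of the quantity controlled by \cref{lem:computation} has negative drift, the exploration terminates almost surely, and this directly forces every white cluster — hence every complementary ``pocket'' of an infinite black cluster — to be finite, yielding uniqueness.
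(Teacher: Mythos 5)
Your proposal is correct and takes essentially the same route as the paper: colour symmetry together with the $p_c$ part of \cref{thm:site} rules out infinite white clusters for $p\ge \tfrac12\bigl(1+\sqrt{3-2/\alpha}\bigr)$, and then a planarity/one-endedness argument shows that two infinite black clusters would force an infinite white cluster, giving uniqueness. The only (cosmetic) differences are that the paper runs the topological step via the white ``collar'' of an infinite complementary component of a black cluster rather than via interfaces, and argues directly for all $p\ge \tfrac12\bigl(1+\sqrt{3-2/\alpha}\bigr)$, so no limiting sequence $p\downarrow p^{*}$ is needed.
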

\begin{proof}
Assume $p \ge
1/2(1+\sqrt{3-2/\alpha}) $. Consider the event $\mathcal E$ that there are two infinite black clusters. Then
one of the components of the complement of one of them must be
infinite. Then the vertices in this component which connect to the
infinite black cluster must be white. This means that there is also an
infinite white cluster since the map is locally finite and one ended almost
surely. Since white clusters are finite almost surely in the given regime of $p$ (using \cref{thm:site} (i) and symmetry), $\mathcal E$ has probability $0$. 
\end{proof}

\begin{figure}[t]
\centering{\includegraphics[scale=1]{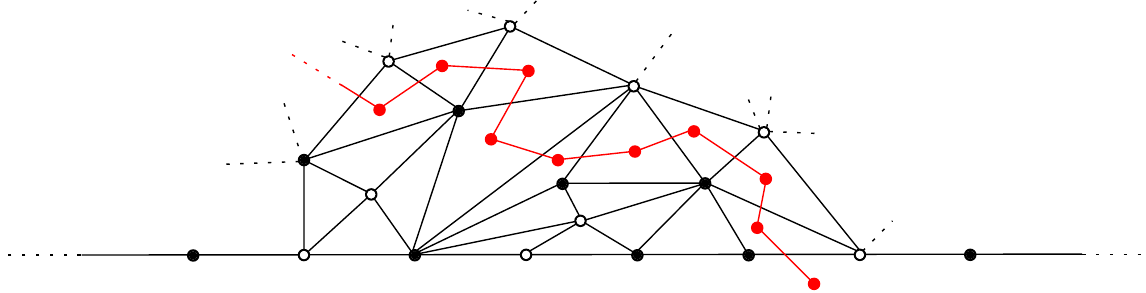} }
\caption{An illustration of an interface between a black and a white cluster for percolation in a half planar triangulation.}\label{fig:interface}
\end{figure}

One can define an interface between the black and white
clusters of a percolation configuration. An interface is a
well defined path in the dual configuration which separates the white
and black clusters (see \cref{fig:interface}). Recall that in the dual configuration of a half planar map, we break up the vertex corresponding to the infinite face into an infinite number of vertices one corresponding to each boundary edge. We are interested in the interfaces which cross a boundary edge. These are the interfaces which start on
those edges on the boundary which are incident to a black vertex and a white vertex. Interfaces mark the boundary between a white and a
black cluster on both its side. An interface might be finite or
infinite. Finite interface separate finite clusters from infinity
while infinite interfaces correspond to an infinite black cluster on
one side and an infinite white cluster on the other. So in particular,
if $p \in
[0,p_c] \cup [p_u,1]$ in a supercritical half planar triangulation, every
interface is finite almost surely.

In the following exploration procedure
the vertices whose colors have not
been revealed yet will be called \textbf{ free} vertices.
\begin{description}
 \item    \textbf{Algorithm 2:} We start with the root vertex colored white, the vertex incident to the right of the root edge colored black and every other vertex on the boundary free. Now we start performing peeling on the root edge.

Suppose after $n$ steps of peeling, the boundary of $T_n$ consists of free vertices except for a finite contiguous white segment followed by a finite contiguous black segment to the right of the white segment. We now peel on the unique boundary edge of $T_n$ connecting the black and white segments. If after a peeling step the third vertex of the face revealed is free, we reveal its
color. If the triangle
revealed {\em swallows} all the black vertices to the right (resp. white
vertices to the left) and the revealed third free vertex is white
(resp. black), then we reveal the colors of the vertices along the boundary to the
right (resp. left) of revealed third vertex until we find a black (resp. white)
vertex. Notice that after such a step, we are again left with a boundary which consists of free vertices except for a finite contiguous white segment followed by a finite contiguous black segment to its right. We can now continue this procedure.  
\end{description}

Let $\mathcal I$ be the event that there is an infinite interface starting from
the root edge.
\begin{lem}\label{lem:interface}
Let $p \in (1/2(1-\sqrt{3-2/\alpha}),1/2(1+\sqrt{3-2/\alpha}))$ and assume random i.i.d. boundary condition. Then
$$\P_p(\mathcal I) > 0.$$
\end{lem}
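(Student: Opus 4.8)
The plan is to run \textbf{Algorithm 2} (the interface exploration) and track a random walk associated with its boundary data, showing that with positive probability this walk never dies, which forces an infinite interface. Recall that until the exploration terminates, the boundary of $T_n$ consists of free vertices except for a contiguous white segment immediately followed (to its right) by a contiguous black segment, and the interface follows the edge separating them; the interface is infinite precisely on the event that this configuration persists forever, i.e. that neither the black segment nor the white segment is ever swallowed entirely. So I would let $W_n$ (resp.\ $D_n$) be the number of white (resp.\ black) vertices on the boundary of $T_n$ in these two segments, started from $W_0 = D_0 = 1$, and ask that both stay positive for all time.

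The key computation is the increment analysis. Peeling the edge between the white and black segments, an $\alpha$-step reveals a free third vertex whose color is black with probability $p$ and white with probability $1-p$, changing $(W_n, D_n)$ by $(0,+1)$ or $(+1,0)$; a step of the form $(R,i)$ or $(L,i)$ removes $i$ vertices from one side and then, if that side is exhausted, reveals colors along the boundary until the segment is re-established, which — by the random i.i.d.\ boundary — appends a fresh geometric-type number of vertices of the appropriate color. By the symmetry between $(L,i)$ and $(R,i)$ noted in \cref{sec:notation}, and by the same generating-function computation underlying \cref{lem:expected_change} and \cref{lem:computation}, the drift of $D_n$ (and by symmetry of $W_n$) is $\E_p(B)$ plus a symmetric contribution, and the hypothesis $p \in \big(\tfrac12(1-\sqrt{3-2/\alpha}),\tfrac12(1+\sqrt{3-2/\alpha})\big)$ is exactly the range where, by \cref{lem:computation}, the relevant drift is strictly positive on \emph{both} sides. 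Moreover $-\Delta D_n$ and $-\Delta W_n$ have exponential tails in the supercritical regime since $p_i$ does, so these are mean-positive, finite-variance random walks (with one-sided heavy steps replaced by light tails here).

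Given positive drift and light tails on both coordinates, a standard argument gives that with positive probability $W_n > 0$ and $D_n > 0$ for all $n$: each coordinate is a random walk with positive drift that increases by at most $1$ per step, so by the strong law plus a Doob/maximal-inequality estimate (or by comparison with a biased walk and optional stopping) the probability it ever hits $0$ starting from $1$ is strictly less than $1$; since the two "dies on the left" and "dies on the right" bad events are each of probability bounded away from $1$, a union bound leaves positive probability that neither occurs. On that event the interface from the root edge is infinite, and since the boundary condition was the random i.i.d.\ one (so that Algorithm 2's initial prescription of a white then black vertex at the root edge occurs with probability $p(1-p) > 0$, and conditioning on it is harmless), we get $\P_p(\mathcal I) > 0$.

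The main obstacle, and the step I would be most careful with, is the precise bookkeeping of the increments of $(W_n, D_n)$ when a large $(L,i)$ or $(R,i)$ step swallows an entire segment: one must check that re-revealing boundary colors to rebuild the segment does not introduce unfavorable correlations and that the resulting increment still has positive mean — this is where the i.i.d.\ boundary assumption and the $(L,i)\leftrightarrow(R,i)$ symmetry are essential, and where one leans on \cref{lem:computation} to identify the drift. The rest (exponential tails from $p_i$, transience-of-the-walk style estimates) is routine and I would leave it to the reader, much as the surrounding lemmas do.
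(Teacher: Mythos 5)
Your setup is essentially the paper's: run Algorithm 2, track the sizes of the white and black boundary segments, and use \cref{lem:computation} together with the black/white (equivalently $p \leftrightarrow 1-p$) symmetry to see that each segment-size process dominates a random walk with strictly positive drift for $p$ in the stated interval. One small correction to your bookkeeping: the drift of the black segment is just $\E_p(B)$ --- the $(L,i)$ steps contribute $0$ to it --- so there is no extra ``symmetric contribution''; symmetry is invoked only to handle the white side. The correlation worry you raise about rebuilding a swallowed segment is resolved in the paper exactly as you suspect: the geometric number of freshly revealed vertices is simply discarded, giving a stochastic domination of the black segment size by the reflected walk $(B_k+B)^+ + \mathbbm{1}_{\{B_k+B\le 0\}}$ with i.i.d.\ increments.

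The genuine gap is your concluding step. From ``each coordinate, started at $1$, hits $0$ with probability bounded away from $1$'' you cannot conclude by a union bound that with positive probability neither hits $0$: two events each of probability less than $1$ can still cover the whole space, and here the two one-sided death probabilities need not be below $1/2$ (as $p$ decreases to $p_c$ the drift of the black segment tends to $0$, so the probability that a walk started at $1$ ever hits $0$ tends to $1$), nor are the two events independent, since both are driven by the same peeling sequence. The paper's final step is different: both reflected processes have positive drift, hence tend to infinity almost surely, so almost surely only finitely many swallowing (reset) events occur, and a restart argument then gives positive probability that no swallowing ever occurs, i.e.\ that $B_k\ge 1$ and $W_k\ge 1$ for all $k$, which is precisely the event $\mathcal I$. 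Alternatively, you can repair your own argument: with positive probability the first $N$ steps are $\alpha$-steps that build both segments up to height of order $N$, and from that height each one-sided death probability is at most $e^{-cN}$ by \cref{lem:LD}; choosing $N$ large makes the two failure probabilities sum to less than $1$, and only then is the union bound legitimate.
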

\begin{proof}
Suppose we are on the event that the root vertex is colored white, the vertex incident to the right of the root edge colored black. Now we perform algorithm $2$. Let
$B_k $ be the size of the black connected segment and $W_k$ be that of
the white connected segment at the $k$th step of the algorithm. Recall the definition of the variable $B$ defined in \cref{lem:computation}. Conditioned on $B_k$, $B_{k+1}$ stochastically
dominates a variable which has the same distribution as
$(B_k+B)^+ + \mathbbm{1}_{\{B_k+B\le 0\}}$ and $B_{k+1}-B_{k}$ are
independent for every $k$. The domination comes from the fact that if
a white segment is {\em swallowed}, we add a geometric $p$ number of
black vertices to $B_k$ which we ignore in the prescribed expression. Now for $p$ in the given range, $\E(B) > 0$,
hence $B_{k}$ forms a random walk with a positive drift. This implies
$B_k \to \infty$ almost surely. Similarly by symmetry, $W_k \to
\infty$ almost surely for $p$ in the given range. All this implies the
event $\{B_k>1,W_k > 1 \text{ for all } k\ge 0\}$ has positive
probability. But $B_k>1$ and $W_k > 1	$ for all $k \ge 0$ implies that
the interface we started with is infinite. This completes the proof.
\end{proof}
   Recall the notations $W_k^{\infty}, B_k^{\infty}$ the number of black and white infinite clusters respectively which has least one vertex on the boundary within distance $k$  along the boundary from the root vertex.
\begin{proof}[Proof of \cref{thm:site} (for $p_u$) and \cref{thm:density}]
Fix a number $p$ in the following range: $p \in (1/2(1-\sqrt{3-2/\alpha}),1/2(1+\sqrt{3-2/\alpha}))$. Let $E_k$ be the number of edges within distance $k$ from the root edge along the boundary such that there is
an infinite interface starting from that edge. Now note that the measure $\P_p$
is ergodic with respect to translation of the root (follows easily from \cite{AR13} Proposition 1.3). Hence Birkhoff's
ergodic
theorem implies that almost surely,
\begin{equation}
  \frac{E_k}{k} \to \P_p(\mathcal I).
\end{equation}
 Note that $W_k^\infty+B_k^\infty =
E_k+1$ and also $|W_k^\infty - B_k^\infty|\le 1$. Hence, 
\begin{equation}
W_k^\infty/k\to \rho \qquad \text{ and } \qquad B_k^\infty/k \to \rho \nonumber
\end{equation}
where $\rho = \P_p(\mathcal I)/2 > 0$ from \cref{lem:interface}. This
proves \cref{thm:density} as well as shows that $p_u \ge
1/2(1+\sqrt{3-2/\alpha})$. 
\end{proof}

Now we turn to the proof of \cref{thm:ends}. We will need the following technical Lemma which can be easily shown using optional stopping Theorem. For details, we refer the reader to \cite{gallager1996discrete} Corollary 9.4.1 and Exercise 9.13. 
\begin{lem}\label{lem:LD}
Let $X_1,X_2,\ldots$ be an i.i.d. sequence of random variables such that $E(X_1) > 0$ and $\E(\exp(\lambda X_1))$ exists for values of $\lambda$ in a neighbourhood around $0$. Let $S_n = \sum_{i=1}^n X_i$. Then for any $k>0$ there exists some constant $c>0$ such that 
\[
\P(\cup_{n\ge 1}\{S_n \le -k\}) < \exp(-ck)
\]
\end{lem}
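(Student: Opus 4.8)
The plan is to run the standard Cram\'er--Chernoff argument for a random walk with positive drift, packaged through an exponential supermartingale and the optional stopping theorem. Write $m := \E(X_1) > 0$ and $\varphi(\lambda) := \E(e^{-\lambda X_1})$, which by hypothesis is finite on some interval $(-\delta,\delta)$. On this interval $\varphi$ is smooth, convex, with $\varphi(0) = 1$ and $\varphi'(0) = -\E(X_1) = -m < 0$. Hence there is some $\lambda_0 \in (0,\delta)$ with $\varphi(\lambda_0) \le 1$; this is the only quantitative input about the law of $X_1$ that the proof uses.

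Next, I would set $M_n := \exp(-\lambda_0 S_n)$ for $n \ge 0$, so that $M_0 = 1$ and $M_n \ge 0$. Since the increments $X_i$ are i.i.d. and $\varphi(\lambda_0) \le 1$, the process $(M_n)_{n\ge 0}$ is a nonnegative supermartingale for the filtration $\mathcal G_n := \sigma(X_1,\dots,X_n)$: indeed $\E(M_{n+1}\mid \mathcal G_n) = M_n\,\varphi(\lambda_0) \le M_n$.

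Now define $\tau := \inf\{n \ge 1 : S_n \le -k\}$, so that $\bigcup_{n \ge 1}\{S_n \le -k\} = \{\tau < \infty\}$. Applying the optional stopping theorem to the bounded stopping time $\tau \wedge n$ gives $\E(M_{\tau\wedge n}) \le M_0 = 1$. On the event $\{\tau \le n\}$ we have $S_\tau \le -k$, hence $M_{\tau \wedge n} \ge M_{\tau}\mathbbm{1}_{\{\tau \le n\}} \ge e^{\lambda_0 k}\mathbbm{1}_{\{\tau \le n\}}$, and therefore $e^{\lambda_0 k}\,\P(\tau \le n) \le 1$. Letting $n \to \infty$ yields $\P(\tau < \infty) \le e^{-\lambda_0 k}$, which is the claim with $c = \lambda_0$; a strict inequality is obtained by taking any smaller positive constant.

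The argument is entirely routine; the only points needing a moment's care are the existence of a strictly positive $\lambda_0$ with $\varphi(\lambda_0)\le 1$ (immediate from $\varphi(0)=1$, $\varphi'(0)<0$ and continuity, using that $\varphi$ is finite near $0$) and the passage to the limit in the optional stopping step, which is handled by the truncation $\tau\wedge n$ together with monotone convergence. In particular no a priori finiteness of $\tau$ is required, and the resulting bound is uniform in the law of $X_1$ once $\lambda_0$ and $\varphi(\lambda_0)$ are fixed.
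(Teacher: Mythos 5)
Your proof is correct and follows exactly the route the paper indicates: the paper does not spell out a proof but cites the optional stopping theorem (referring to Gallager, Corollary 9.4.1 and Exercise 9.13), and your exponential supermartingale $M_n=\exp(-\lambda_0 S_n)$ with optional stopping at $\tau\wedge n$ is precisely that standard argument, carried out completely and correctly, including the choice of $\lambda_0$ from $\varphi(0)=1$, $\varphi'(0)=-\E(X_1)<0$ and the limit $n\to\infty$.
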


 \begin{lem}\label{lem:one_end}
  Fix $p \in (p_c,p_u)$ and assume random i.i.d. boundary condition. The $\P_p$ probability that the root vertex is contained in an infinite black cluster with one end or an infinite white cluster with one end is $0$.
 \end{lem}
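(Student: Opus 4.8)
The plan is to show that an infinite black cluster containing the root a.s. has at least two ends, and by the black/white symmetry the same argument handles an infinite white cluster. First I would set up the exploration from the root edge. Condition on the event that the root vertex lies in an infinite black cluster $\mathcal C$; by \cref{thm:site} we are in the regime where both an infinite black and an infinite white cluster exist a.s., and by ergodicity (see \cite{AR13}, Proposition 1.3) the root vertex is in an infinite black cluster with positive probability, so it suffices to work on that event. The idea is to run Algorithm 2 (interface exploration) simultaneously on \emph{two} boundary edges: one to the left of the root vertex and one to the right, and show that with positive probability both produce infinite interfaces that stay ``far'' from each other, thereby exhibiting $\mathcal C$ as lying between two infinite white clusters, so $\mathcal C$ has at least two ends.

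More concretely, the key steps are as follows. (i) Using \cref{lem:interface} and a second-moment / ergodicity argument I would first upgrade ``an infinite interface through the root edge has positive probability'' to: on the event that the root is in an infinite black cluster, a.s. there exist at least two distinct infinite interfaces bounding $\mathcal C$, one ``emanating to the left'' and one ``to the right'' of the root vertex. This uses that $W_k^\infty$ (from \cref{thm:density}) tends to infinity, so $\mathcal C$ abuts infinitely many distinct infinite white clusters along the boundary; pick two of them, and their interfaces with $\mathcal C$ are two infinite interfaces. (ii) Then I would argue that these two infinite interfaces separate $\mathcal C$ into (at least) two infinite pieces that cannot be joined within $\mathcal C$ by a path staying in any fixed ball $B_n$: indeed, a path in $\mathcal C$ joining a far-out vertex of one interface-side to a far-out vertex of the other must cross one of the white clusters, which is impossible. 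This is exactly the condition in the definition of the metric $d(\xi,\eta)$ on $\END(\mathcal C)$ being bounded below, so $\mathcal C$ has $\ge 2$ ends. (iii) Apply the identical argument with colors swapped for the white case; conclude the stated probability is $0$.

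The technical engine I would lean on is \cref{lem:LD}: in the interface exploration the relevant walks $B_k, W_k$ have positive drift for $p\in(p_c,p_u)$, and \cref{lem:LD} gives that, conditioned on an interface being infinite, the exploration ``escapes'' in the sense that the black (resp. white) segment sizes never return below a fixed level $k$ except with probability $\exp(-ck)$. This lets me control, with exponential estimates, the event that two interfaces started at edges at boundary-distance $\ell$ apart never interact — so both remain infinite and independent-like, giving the required separation into two infinite regions. The main obstacle, I expect, is step (ii): making rigorous the purely topological statement that two infinite interfaces of $\mathcal C$ that start at distinct boundary edges actually certify two distinct ends, i.e. that one cannot reconnect the two sides of $\mathcal C$ inside a finite hull. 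This requires a careful planarity/one-endedness argument — any such reconnecting path together with a boundary arc would enclose one of the infinite white clusters in a finite region, contradicting its being infinite — combined with the fact, provable via \cref{lem:LD} and Borel–Cantelli, that the interfaces exit every finite ball. Once that topological lemma is in place, the probabilistic estimates are routine applications of \cref{lem:interface}, \cref{lem:LD}, and ergodicity.
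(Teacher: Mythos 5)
There is a genuine gap, and it sits exactly at the step you yourself flag as the ``main obstacle'': step (ii) is not merely technically delicate, it is false as stated. Knowing that $\mathcal C$ is bounded on its left and right by two infinite interfaces, with distinct infinite white clusters $W_1,W_2$ on their far sides, does \emph{not} certify that $\mathcal C$ has two ends. The clusters $W_1,W_2$ and the two interfaces lie \emph{outside} the region of the half plane that contains $\mathcal C$; a black path joining a far-out vertex on the $I_1$-side of $\mathcal C$ to a far-out vertex on the $I_2$-side can simply travel ``over the top'' through the interior of that region and never comes near $W_1$ or $W_2$, so no crossing of a white cluster is forced. Concretely, consider a configuration in which the two boundary neighbours of the (black) root start two disjoint infinite white ``columns'' and everything between them is black: the root's cluster is an infinite strip with one end, yet it is adjacent to two infinite white clusters across two infinite interfaces. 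So the planarity argument you propose (``any reconnecting path together with a boundary arc would enclose one of the infinite white clusters'') encloses the wrong region: the enclosed region is an inner portion of the strip between the interfaces, while $W_1,W_2$ are outside it. The separating object has to be found \emph{inside} the region above the cluster, and that requires a probabilistic input your plan does not supply. This is precisely what the paper's proof produces: it peels alternately at the two interface edges bordering the root's black boundary segment, shows via the positive drift of \cref{lem:computation} together with \cref{lem:LD} that the root edge is swallowed only finitely often (\cref{lem:finite_swallow}), so the unexplored maps converge to a limit half-plane map $T_\infty$ whose boundary is an all-black bi-infinite path contained in $\mathcal C$ and whose law is again $\H_\alpha$ with i.i.d.\ interior by the domain Markov property; a fresh application of \cref{thm:site} then yields an infinite \emph{white} cluster strictly inside $T_\infty$, i.e.\ above the cluster's boundary path, and it is this internal white cluster that separates the two directions of $\mathcal C$ and forces a second end.

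A secondary problem is step (i): the inference from \cref{thm:density} that ``$W_k^\infty\to\infty$, so $\mathcal C$ abuts infinitely many distinct infinite white clusters'' is a non sequitur. $W_k^\infty$ counts infinite white clusters meeting the boundary near the root, but says nothing about which black clusters they are adjacent to; the infinite white clusters far along the boundary are adjacent to other black clusters, not necessarily to $\mathcal C$. One can in fact argue that an infinite $\mathcal C$ containing the root is flanked by two infinite interfaces (via the alternation of infinite boundary clusters), but this needs a separate argument that you do not give — and, as explained above, even granting it the conclusion about ends does not follow, so the proposal does not yield \cref{lem:one_end} without the paper's $T_\infty$ construction or some substitute for it.
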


\begin{proof}
Suppose without loss of generality the color of the root vertex is black and we shall prove that the probability that this vertex is contained in an infinite black cluster with one end is $0$. Reveal vertices to the left and right of this vertex along the boundary until we find a white vertex on both sides. In the exploration we describe now, there will be a contiguous finite white segment followed by a contiguous finite black segment followed by a contiguous finite white segment on the boundary and the rest of the vertices on the boundary are free. We shall peel alternately at the two edges connecting the black and the white segments to the left and to the right. If at any step we swallow all the black vertices we stop. If we swallow all the white vertices to the left (resp. to the right), we reveal black vertices to the left (resp. to the right) along the boundary until we find a white vertex. Consider the sequence of maps $T_n$. Define the root edge of this map to be the same root edge as in the previous step if it has not been swallowed in that step. If it is swallowed, define the edge in the middle of the black segment in the boundary of $T_n$ oriented from left to right to be the new root edge.
\begin{lem}\label{lem:finite_swallow}
The root edge is swallowed finitely many times almost surely in the above described exploration.
\end{lem}
\begin{proof}
Notice that on the event we stop the exploration, the Lemma is true by definition.
 Let $L_n$ (resp. $R_n$) be the distance between the root vertex and edge to the left (resp. right) on which we perform the $n$th peeling step in the above described exploration and let $B_n = L_n+R_n$ be the length of the black segment. Clearly, the sequence $\{\Delta B_n\}_{n\ge 1}$ is an i.i.d. sequence of variables with each of which is distributed as $B$. Recall that $B$ has positive expectation in the given regime of $p$ (using \cref{lem:computation}). Hence using standard large deviation estimates, on the event that we do not stop the exploration, the probability of $B_n \le tn$ for small enough $t$ has probability at most $\exp(-cn)$ for some constant $c>0$. 

Now consider the event $\mathcal E_n$ that the root edge is swallowed in the $n$th step and is swallowed again in some step after the $n$th step. On the event $B_n >tn$ if the root edge is swallowed in the $n$th step, then by description of the exploration both $L_n$ and $R_n$ are at least $tn/2-1$. If the root edge is swallowed again, then either $\{L_k\}_{k\ge n}$ or $\{R_k\}_{k \ge n}$ has to reach $0$ starting from at least $tn/2-1$. This event has probability at most $\exp(-c'n)$ for some $c'>0$ via \cref{lem:LD} since $L_n$ as well as $R_n$ has i.i.d. increments with positive expectation in every alternate step until the root edge is swallowed. Combining the pieces, we see that $\mathcal E_n$ has probability at most $\exp(-c''n)$ for some $c''>0$ which means $\mathcal E_n$ occurs for finitely many $n$ by Borel-Cantelli lemma. This completes the proof.
\end{proof}

\begin{figure}[t]
\centering{\includegraphics[scale=0.7]{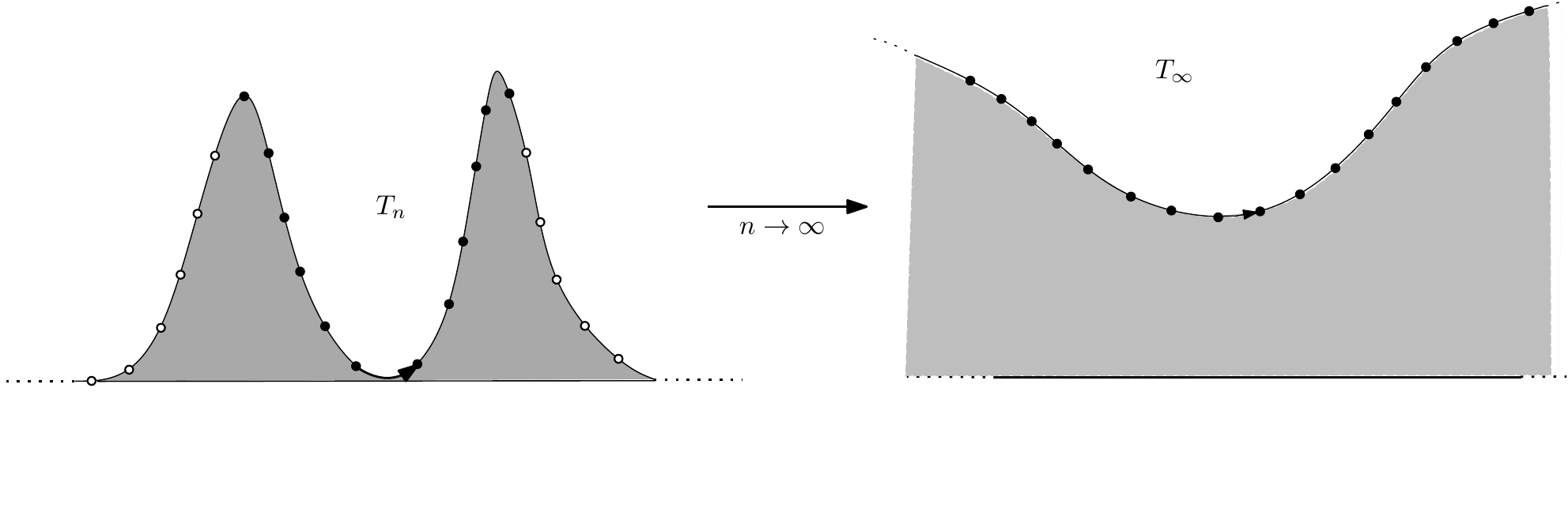} }
\caption{An illustration of the proof of \cref{lem:one_end}. The gray area to the left denotes the revealed part in the exploration.}\label{fig:one_end_lemma}
\end{figure}

Let $T$ be a map with law $\P_p$ and we perform the above exploration.
Let $B_n$ be the number of black vertices on the boundary of $T_n$. On the event that $B_n \to \infty$, $T_n$ converges almost surely to a submap $T_\infty$ of $T$ since the root edge is swallowed finitely often almost surely via \cref{lem:finite_swallow} (see \cref{fig:one_end_lemma}). However, on the event $B_n \to \infty$, from the domain Markov property, $T_\infty$ has law $\P_p$ with all boundary vertices black and the rest of the vertices free. This map almost surely contains an infinite white cluster for the given range of $p$ via \cref{thm:site}. To see this notice that we can perform peeling until we expose the hull of radius $1$ around the boundary (that is all the faces which are incident to at least one vertex of the boundary along with the finite components of their complement.) The complement of this revealed map is another independent percolation configuration on  a half planar triangulation with law $\H_\alpha$ and random i.i.d. boundary condition. So we can apply \cref{thm:site}. However the presence of an infinite white cluster in $T_\infty$ means that the cluster containing the root has at least two ends almost surely on the event that the cluster is infinite. The rest of the details are left to the reader. 
\end{proof}

 \begin{corollary}\label{cor:isolated_end}
Every infinite cluster do not contain an isolated end almost surely.
\end{corollary}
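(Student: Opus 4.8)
The plan is to reduce \cref{cor:isolated_end} to \cref{lem:one_end} via the domain Markov property, in two stages: first upgrade \cref{lem:one_end} to the statement that $\P_p$-almost surely \emph{no} vertex lies in an infinite monochromatic cluster with exactly one end, and then reduce the general ``isolated end'' assertion to this one by a finite-energy surgery.

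For the first stage, fix a vertex $v$ and suppose $v$ lies in an infinite cluster $\mathcal C$ with a single end. If $v$ is on the boundary of the map this already contradicts \cref{lem:one_end} together with translation invariance of $\P_p$ (\cite{AR13}, Proposition 1.3), since the probability that a given boundary vertex lies in a one-ended infinite cluster equals that for the root, namely $0$. For a general $v$, reveal the hull $B_n$ of radius $n$ around the root; since the hulls exhaust the one-ended map $T$, for $n$ large $B_n$ meets $\mathcal C$, and because $\mathcal C$ is one-ended, $\mathcal C\setminus B_n$ then has a unique infinite component $K$, which touches $\partial B_n$. Since $\mathcal C\cap T_n$ differs from $K$ by only finitely many vertices, the cluster in $T_n$ of any vertex of $K$ is again infinite with a single end. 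By the domain Markov property $T_n$ is an independent copy of $\H_\alpha$ whose vertices carry i.i.d.\ colours apart from the finitely many in $\partial B_n$; re-rooting $T_n$ at a vertex of $K$ lying on $\partial B_n$ and invoking \cref{lem:one_end} (whose proof only inspects boundary colours near the root until it first meets the opposite colour on each side, hence is unaffected by conditioning on finitely many boundary colours) yields the desired contradiction.

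For the second stage, suppose that with positive probability some infinite cluster $\mathcal C$ has an isolated end $\xi$. Unwinding the metric on $\END(\mathcal C)$, this means there is an integer $n$ for which the component $K$ of $\mathcal C\setminus B_n$ through which a tail of $\xi$ runs is itself an infinite one-ended graph (a second end of $K$ would be an end of $\mathcal C$ distinct from $\xi$ and within distance $1/n$ of it), where $B_n$ is the hull of radius $n$ around the root. Let $D$ be the set of vertices of $\mathcal C\cap B_n$ adjacent to $K$; one checks $D$ is finite and that every vertex of $\mathcal C$ adjacent to $K$ but not in $K$ lies in $D$. Recolouring the vertices of $D$ with the colour opposite to that of $\mathcal C$ therefore turns $K$ into a free-standing infinite monochromatic cluster, still one-ended since $K$ itself is untouched. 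Decomposing the event over the countably many possibilities for $n$ and the finite set $D$, we may take $D$ deterministic, so this recolouring changes only finitely many fixed coordinates, which are i.i.d.\ under $\P_p$ (boundary vertices included); hence the modified configuration has law absolutely continuous with respect to $\P_p$, and the first stage forbids such a cluster almost surely, contradicting that the original event had positive probability.

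The step I expect to cause the most trouble is the planar-topology and Markov-property bookkeeping underlying both stages: checking that once a finite hull is peeled off, the ``far'' part of a one-ended cluster is a genuine one-ended cluster of the fresh configuration (rather than being enlarged through the shared boundary $\partial B_n$), that the set $D$ really captures all adjacencies of the branch $K$, that $K$ indeed touches $\partial B_n$, and that the conditioning and recolouring operations are measure-preserving up to absolute continuity. All of these are intuitively clear because $K$ interacts with the removed region only through the finite set $\partial B_n$, but making them precise requires exactly the kind of careful topological argument that the rest of \cref{sec:percolation1} leaves to the reader.
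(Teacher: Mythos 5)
Your argument is correct in substance and uses the same core ingredients as the paper (hull removal, the domain Markov property, translation invariance, and \cref{lem:one_end}), but the reduction from ``isolated end'' to \cref{lem:one_end} is genuinely different. The paper argues directly that on the event of an isolated end there is some $r$ for which $T\setminus B_r$ itself has an infinite \emph{one-ended} cluster incident to its boundary, and then invokes \cref{lem:one_end} together with the domain Markov property; making this literal requires checking that the cluster of the isolated branch inside $T\setminus B_r$ does not pick up further infinite branches through same-coloured vertices of $\partial B_r$ (true for $r$ large, but it is exactly the kind of bookkeeping the paper leaves implicit). You sidestep that verification: you first upgrade \cref{lem:one_end} to ``no vertex of the map lies in a one-ended infinite cluster'' (your stage 1, which is essentially the paper's mechanism plus a union bound over $n$ and over boundary vertices), and then convert the isolated-end branch $K$ into a genuine one-ended cluster by flipping the colours of the finite set $D$ of its same-coloured neighbours, concluding by insertion tolerance/absolute continuity after decomposing over the finitely describable surgery data. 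What your route buys is that one only ever needs one-endedness of $K$ itself, a purely deterministic graph fact; what it costs is the extra finite-energy layer and the countable decomposition. Two small points: in stage 1, since the hull $B_n$ is determined by the map alone, the pair $(T_n,\text{its colours})$ is exactly $\P_p$-distributed with i.i.d.\ colours, so no caveat about conditioning on the colours of $\partial B_n$ is needed -- a union bound over $n$ suffices; and the phrase ``a vertex of $K$ lying on $\partial B_n$'' is a slip, since $K$ is disjoint from $B_n$ -- you should re-root at a same-coloured vertex of $\partial B_n$ adjacent to $K$ (which exists because $K$ connects to $\mathcal C\cap B_n$ through $\partial B_n$), and this vertex belongs to the one-ended cluster in question.
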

\begin{proof}
We prove the corollary for an infinite cluster containing the root vertex. The proof for any infinite cluster is an easy exercise using the domain Markov property, and is left to the reader. Let $B_r$ be the hull of the ball of radius $r$ from the root of a map with law $\H_\alpha$. Let $T$ be a half planar triangulation with law $\P_p$. Suppose with positive probability there is an infinite cluster in $T$ containing the root vertex which has an isolated end. This implies that with positive probability there exists an $r$ such that $T \setminus B_r$ has an infinite cluster incident to the boundary with one end. This is a contradiction because of \cref{lem:one_end} and domain Markov property.   
\end{proof}

\begin{proof}[Proof of \cref{thm:ends}]
 \cref{cor:isolated_end} shows that each infinite cluster do not contain an isolated end. Since $\END$ is compact, the non-isolated points form a perfect subset via the Cantor Bendixson Theorem. Hence this implies that the set of ends has cardinality of the continuum (see \cite{Kurbook}). 
\end{proof}

\section{Conclusion}\label{sec:open}

\subsection{Spectral dimension for $\alpha \in [0,2/3)$.} Recall that the spectral dimension of a graph $d_s(G)$ is defined as 
\[
  d_s(G) = -2 \lim_{n \to \infty} \frac{\log p^G_{2n}(x,x)}{\log n} 
\]
 where $p^G_{2n}(x,x)$ is the probability that simple random walk on the graph $G$ starting from $x$ returns to $x$ in $2n$ steps. The small cutsets of the subcritical maps give us the hint that these maps fall in the class of strongly recurrent graphs as in \cite{kumagai2008heat} and the spectral dimension should be almost surely $4/3$. Let us now argue briefly why this should be true. We alert the reader is that what follows is not a rigorous proof but just a proof outline which we believe can be made into a complete proof by the diligent reader.

Consider the functions $v,r: \N \to [0,\infty)$ where $v(R) = R^2$ and $r(R)=R$. Then $v$ and $r$ satisfy (1.12) of \cite{kumagai2008heat}. The function $v(R)$ correspond to the volume frowth of the ball of radius $R$ and $r(R)$ correspond to the effective resistance (see \cite{MCMT} for background) between the root and the complement of the ball of radius $R$.
Let $\P$ denote the measure of a simple random walk $X_0,X_1,\ldots$ starting from the root $\rho$ on a map $T$ with law $\H_\alpha$. Let $E_T$ denote the expectation corresponding to the simple random walk measure given an instance of the map $T$ and let $d(.,.)$ denote the graph distance metric on the map $T$. Also suppose $e_R$ denote the smallest time when the simple random walk is not in the ball of radius $R$ around the root.

 We aim to give a sketchy argument to show that assumption 1.2 (1) and (3) of \cite{kumagai2008heat} are satisfied for the above choice of $v$ and $r$. Then it would follow via equation (1.19) of Proposition 1.3 and Theorem 1.5(III) of \cite{kumagai2008heat} that 
\begin{mylist}
\item $\P (M^{-1} < (1+d(\rho,X_n))n^{-2/3} , d(\rho,X_n)n^{-2/3} <M )  \xrightarrow[]{M \to \infty} 1$

 \item $\H_\alpha$- almost surely, \[
       d_s(T):= -2\lim_{n \to \infty}\frac{\log p^T_{2n} (\rho,\rho)}{\log n} = \frac43 
                                   \]

\item$\H_\alpha$- almost surely, \[
       \lim_{R \to \infty} \frac{ \log E_T (e_R)}{\log R} = 3
      \]
\end{mylist}

 To show that assumption 1.2 (1) and (3) of \cite{kumagai2008heat}, it is enough to show that for all large enough $\lambda$, $R\ge 1$ and some constant $c>0$,
\begin{align}
 \P(|Ball_R|/R^2 \in (\lambda^{-1},\lambda)) & > 1-c\lambda^{-1/2} \label{eq:speedsub1}\\
  \P(R_{\text{eff}}(0,T \setminus Ball_R) \ge \lambda^{-1}r) & >1-c\lambda^{-1}\label{eq:speedsub2}
\end{align}
where $Ball_R$ denote the ball of radius $R$ around the root vertex.
Recall the notations and the peeling algorithm to reveal the hull of the ball of radius $r$ in \cref{sec:peel_algo}. Notice that the edges of the triangle revealed (excluding the finite holes) in every step of peeling for all steps from $\tau_r$ to $\tau_{r+1}$ gives a cut-set separating the root from the complement of the ball of radius $R$ if $r <R$. Further these cutsets are disjoint for different $r$'s. Hence using Nash Williams criterion (\cite{MCMT}, Prop 9.15), 
\begin{equation}
   R_{\text{eff}}(0,T \setminus B_R) \ge \sum_{k=1}^R (2\Delta \tau_k)^{-1}\label{eq:speedsub3}
\end{equation}
 The required lower bound for the right hand side of \eqref{eq:speedsub3} follows from the fact that $\Delta \tau_R$ converges to a stationary distribution. To show \eqref{eq:speedsub1}, we need to show that the ball of radius $R$ is roughly $R^2$. An upper bound follows from \cref{thm:hull_convergence} and the term $\lambda^{-1/2}$ is obtained from the tail of the stable $1/2$ variable. The only problem is obtaining the lower bound for the ball volume. But this should not be too difficult to obtain using the ideas of \cite{UIPT2} Section 6. We do not attempt to show this part of the proof in this paper.

\medskip
\subsection{Open questions}

We conclude with several open problems for possible future research. In \cref{thm:vol_h_triang}, it is shown that the volume growth is exponential. A natural question is: what is the exact rate of growth of the volume? We expect similar behaviour as exhibited by a supercritical Galton-Watson tree.
\begin{question}
Suppose $T$ is a map with law $\H_\alpha$ where $\alpha \in (2/3,1)$. Show that almost surely,
\[
\frac{\log |B_r(T)|}{r} \to c
\] 
for some constant $c$ depending only on $\alpha$. Show further that $|B_r(T)|/c^r$ converges to some non-degenerate random variable.
\end{question}
In \cref{thm:ends} it is shown that the supercritical percolation clusters in the regime $p \in (p_c,p_u)$ have uncountably any ends. It would be interesting to know how a supercritical percolation cluster behave. 
\begin{question}
Fix $\alpha \in (2/3,1)$ and $p \in (p_c,p_u)$. Does the supercritical percolation cluster have exponential volume growth? Anchored expansion? Is the simple random walk on it transient? Has positive speed? 
\end{question}
The key to understand the supercritical cluster in this regime is to understand if the supercritical clusters have long thin cutsets which kills anchored expansion. 

\begin{appendix}\label{sec:appendix}

\section{Proof of \cref{lem:tail}}\label{sec:tailproof} 

Recall that $I_{m}(q)$ is the number of internal vertices of a free triangulation of a $m$-gon with parameter $q$ and recall the variable $\theta$ used in \cref{sec:counting_formulas} where $q = \theta(1-2\theta)^2$.

%
%
%
%

\begin{proof}[Proof of \cref{lem:tail} part (i)]
 Without loss of generality assume $x$ is an integer. Let $d_\theta = \frac{4\theta}{(1-6\theta)}$. For simplicity of notation let $I_m(q) = I_m$. Notice that conditioned on $Y=k$, expectation of $I_{Y+1}$ is $d_\theta k + O(1)$ as $k \to \infty$. We want,
\begin{align}
 \P(Y+I_{Y+1} > x)& =\sum_{k\ge 1} \P(I_{Y+1}>x-k |Y=k) \P(Y=k) \label{eq:tail0}
\end{align}
The trick is to break the sum in \eqref{eq:tail0} into sums over three subsets of indices:
\begin{mylist}
 \item $A_1 = \{1 \le k \le \lfloor x/(1+d_\theta) - x^{3/4} \rfloor\}$
\item  $A_3 = \{k > \lfloor x/(1+d_\theta) + x^{3/4} \rfloor\}$
\item $A_2 = \N \setminus (A_1 \cup A_3	)$
\end{mylist}
The sum over $A_2$ is $O(x^{-3/4})$ by bounding $\P(I_{Y+1}>x-k|Y=k)$ by $1$ and using $\P(Y=k) \sim ck^{-3/2}$. Now note 
\begin{align}
 & \sum_{A_1}\P(I_{Y+1}>x-k |Y=k) \P(Y=k)\label{eq:tail1} \\
< &\sum_{A_1}\P(I_{Y+1} -\E(I_{Y+1}|Y=k)>x^{3/4} + O(1) |Y=k) \P(Y=k)\label{eq:tail2} \\
 < &\sum_{A_1}\frac{Var(I_{Y+1}|Y=k)}{x^{3/2}}\P(Y=k) = O(x^{-1}) \label{eq:tail4}.
\end{align}
where we used \cref{prop:free_expectation} part (i) for \eqref{eq:tail2} and Chebyshev's inequality followed by \cref{prop:free_expectation} part (ii) for \eqref{eq:tail4}. Finally, 	
\begin{align}
&\sum_{k\in A_3} \P(I_{Y+1}>x-k |Y=k) \P(Y=k) \label{eq:tail6}\\
=& \sum_{k\in A_3}\P(Y=k) - \sum_{k\in A_3}\P(I_{Y+1} \le x-k |Y=k) \P(Y=k)\label{eq:tail8}\\
 =& \sum_{k\in A_3}\P(Y=k) - O(x^{-1}) \label{eq:tail3}
\end{align}
where the bound in the second term in the right hand side of \eqref{eq:tail3} follows in the same way as \eqref{eq:tail4} using Chebyshev's inequality and \cref{prop:free_expectation} part (ii) plus the fact that the summands are $0$ when $k>x$. Finally it is easy to verify using \eqref{eq:constant1} and the definition of $d_\theta$ that
 \[                                                                                                                                                                                                                                                                                       \sum_{k\in A_3}\P(Y=k) \sim \frac{(1-3\alpha/2)\sqrt{1-2\theta}}{\sqrt{\pi(1-6\theta)}}x^{-1/2}                                                                                                                                                                                                                                                                                       \]
 
\end{proof}
\medskip
\begin{proof}[Proof of \cref{lem:tail} part (ii)]

Note that
 \begin{equation}
   \E((Y+I_{Y+1}) \mathbbm{1} _ {\{Y+I_{Y+1} <x\}} )= \sum_{k=1}^{x-1} (\P(Y+I_{Y+1}\ge k) - \P(Y+I_{Y+1} \ge x)) \label{eq:tail5}
 \end{equation}
 Now from the asymptotics of part (i),
\begin{equation}
 \sum_{k=1}^{x-1} (\P(Y+I_{Y+1}\ge k) =2c_\alpha\sqrt{x}(1+o(1))
\end{equation}
 and 
\begin{equation}
( x-1)\P(Y+I_{Y+1} \ge x) = c_\alpha \sqrt{x}(1+o(1))
\end{equation}
Hence the result follows.
\end{proof}

\end{appendix}

\bibliographystyle{abbrv}
\bibliography{geometry}

\end{document}